\begin{document}

\title{Fast method and convergence analysis of fractional magnetohydrodynamic coupled flow and heat transfer model for generalized second-grade fluid%\thanks{Grants or other notes
%about the article that should go on the front page should be
%placed here. General acknowledgments should be placed at the end of the article.}
}
%\subtitle{Do you have a subtitle?\\ If so, write it here}

\titlerunning{Fast method and convergence analysis of fractional MHD coupled flow and heat transfer model}        % if too long for running head

\author{Xiaoqing Chi         \and
        Hui Zhang$^\ast$  \and  Xiaoyun Jiang}

\authorrunning{X. Chi, H. Zhang and X. Jiang} % if too long for running head

\institute{   X. Chi \at
              School of Mathematics and Statistics, Shandong University,
              Weihai 264209, People's Republic of China \\
              \email{cxq15905413702@163.com} \\ \\
            %  School of Mathematics, Shandong University,
%              Jinan 250100, People's Republic of China \\
%              \email{cxq15905413702@163.com} \\ \\
              H. Zhang \at
              School of Mathematics, Shandong University,
              Jinan 250100, People's Republic of China \\
              $^\ast$Corresponding \email{zhangh@sdu.edu.cn}\\ \\
              X. Jiang \at
              School of Mathematics, Shandong University,
              Jinan 250100, People's Republic of China \\
              \email{wqjxyf@sdu.edu.cn}  \\        %  \\
             %  if needed
          % \and
}

\date{Received: date / Accepted: date}
% The correct dates will be entered by the editor

\maketitle
\begin{abstract}
In this paper, we first establish a new fractional magnetohydrodynamic (MHD) coupled flow and heat transfer model for a generalized second-grade fluid. This coupled model consists of a fractional momentum equation and a heat conduction equation with a generalized form of Fourier law. The second-order fractional backward difference formula is applied to the temporal discretization and the Legendre spectral method is used for the spatial discretization. The fully discrete scheme is proved to be stable and convergent with an accuracy of $O(\tau^2+N^{-r})$, where $\tau$ is the time step size and $N$ is the polynomial degree. To reduce the memory requirements and computational cost, a fast method is developed, which is based on a globally uniform approximation of the trapezoidal rule for integrals on the real line. And the strict convergence of the numerical scheme with this fast method is proved. We present the results of several numerical experiments to verify the effectiveness of the proposed method. Finally, we simulate the unsteady fractional MHD flow and heat transfer of the generalized second-grade fluid through a porous medium. The effects of the relevant parameters on the velocity and temperature are presented and analyzed in detail.
\keywords{Fractional MHD coupled flow and heat transfer model \and Generalized second-grade fluid \and Fast method \and Convergence analysis \and Numerical simulation}
% \PACS{PACS code1 \and PACS code2 \and more}
\subclass{76W05\and 35R11\and 65M12 \and 65M70}
\end{abstract}
\section{Introduction}
Recently, magnetohydrodynamic (MHD) flow and heat transfer have attracted considerable attention because of their important applications in nuclear reactors, metallurgical processes, aerospace engineering, and biomedical engineering \cite{1,2,3,4,5,Wang2015}. These problems involve fluid flow, heat transfer, and multi-field coupling. Thus, their mathematical models include coupling, nonlinearity, and multivariable calculations, all of which make it difficult to obtain the exact form of the solution. With advances in calculation methods, the development of efficient numerical algorithms to simulate the MHD flow and heat transfer phenomena is now a feasible possibility.

%Given the complexity of viscoelastic fluids and the non-locality and memorability  of fractional operators, many fractional models for describing the dynamical behavior of viscoelastic fluids have been developed \cite{6,7,71,8}. The constitutive relationships for generalized viscoelastic fluids can be obtained by using fractional derivatives instead of integer derivatives in the classic Newtonian fluid models. Qi and Xu \cite{9} studied Stokes' first problem for a viscoelastic fluid between two parallel plates in virtue of a generalized Oldroyd-B model with fractional derivatives. A fractional Maxwell model for the unsteady rotating flow of a generalized Maxwell fluid between two infinite straight circular cylinders has been derived by Zheng et al.\cite{10}, while Khan et al. \cite{11} introduced fractional calculus to the constitutive relationship for the oscillating flow of a fractional Burgers fluid. Fractional derivatives have also been applied to the heat conduction laws of viscoelastic fluids. For instance, a new time and space fractional Cattaneo--Christov upper-convective derivative flux heat conduction model has been proposed \cite{12}, and Ezzat et al. \cite{Ezzat} have constructed a fractional heat conduction model with a modified form of Fourier law that can be applied to Stokes' first problem for a viscoelastic fluid with heat sources. Liu and Guo \cite{29} introduced a new fractional model for Fourier law of heat conduction, and combined this with the constitutive relationship to study the unsteady MHD flow of a generalized Maxwell fluid.

{
Second-grade fluid is a common non-Newtonian viscoelastic fluid in the industrial field, which can simulate a variety of liquids such as dilute polymer solutions, slurry flow and industrial oils. The classical linear constitutive model for second-grade fluids has the following form \cite{ey0}}
{
\begin{equation}
{\widetilde{S}(t)}=\mu \varepsilon(t)+\alpha_0 \frac{\partial \varepsilon(t)}{\partial t},
\label{ey1}
\end{equation}
where $\widetilde{S}$ is the additional stress tensor, $\varepsilon$ is the shear strain, $\mu$ is the dynamic viscosity, and $\alpha_0$ is the viscoelastic coefficient.
Although the mathematical models established by using the relation (\ref{ey1}) can provide a reasonable qualitative description of fluid flow, they are not satisfactory from a quantitative point of view \cite{ey1}. Some scholars \cite{ey2,ey3} even found that it is not enough to describe viscoelastic materials with integer order models from both qualitative and quantitative perspective, and they proposed to use the fractional constitutive relation to simulate viscoelastic behaviors of real materials. The fractional constitutive relation of the generalized second-grade fluid can be expressed as follows \cite{25,26}:
\begin{equation}
{\widetilde{S}}(t)=\mu \mathbf{\varepsilon(t)}+\alpha_{1}\ {^{RL}_{~~0}D_{t}^{\gamma}} \varepsilon(t),
\end{equation}
where $\alpha_1$ is the fractional viscoelastic coefficient. The Riemann--Liouville fractional derivative $_{~~0}^{RL}D^{\gamma}_{t} (0<\gamma<1)$ is defined as \cite{33}:
\begin{eqnarray*}
_{~~0}^{RL}D^{\gamma}_{t}u(t)=\frac{1}{\Gamma(1-\gamma)}\frac{d}{dt}\int^t_0\frac{u(\xi)}{(t-\xi)^{\gamma}}d\xi,\quad t>0.
\label{xj140}
\end{eqnarray*}}

{
Many studies on fractional models of the generalized second-grade fluid were carried out. Tan and Xu \cite{ey4,ey5} used the fractional model of generalized second-grade fluid to study the sudden flow of fluid near the wall and the unsteady flow between two parallel plates. Mahmood et al. \cite{ey6} introduced the fractional constitutive relation to study the velocity field and shear stress of longitudinal oscillating flow of generalized second-grade fluid between two infinite coaxial cylinders. Moreover, the effect of transverse magnetic fields on the unsteady flow of generalized second-grade fluid through infinite plates in porous media was investigated in \cite{ey7}.}

{
Fluid flow is usually accompanied by heat transfer. Fractional derivatives are also used in the derivation of the generalized heat conduction law of viscoelastic fluids, which is used to describe the heat transfer phenomenon in complex fluid flow. For instance,  Ezzat et al. \cite{Ezzat} constructed a fractional heat conduction model with a modified form of Fourier law that can be applied to Stokes' first problem for a viscoelastic fluid with heat sources, and Zhang et al. \cite{e121} studied the flow and heat transfer of fractional Oldroyd-B nanofluid between two coaxial cylinders by using the fractional heat conduction equation in spherical coordinate. Liu and Guo \cite{29} introduced a new fractional model for Fourier law of heat conduction, and combined this with the constitutive relationship to study the unsteady MHD flow of a generalized Maxwell fluid.}

%Some researchers introduced the fractional constitutive relationships into complex MHD flow and studied different problems. Cao and Zhao\cite{Cao} carried out a research on MHD flow and heat transfer of fractional Maxwell viscoelastic nanofluid over a moving plate. Zhang and Jiang \cite{zhangyan} studied two-dimensional MHD flow and heat transfer of fractional Oldroyd-B nanofluid between two coaxialcylinders with variable pressure. Jiang and Zhang \cite{Jiang} developed a numerical scheme for solving the unsteady MHD flow of generalized second grade fluid through a porous medium with Hall effects on heat and mass transfer.
%As we all know, there are some research on fractional MHD flow, while the theoretical analysis on the fractional MHD flow is scarce. Based on that,

Motivated by the abovementioned studies, a new MHD coupled flow and heat transfer model with time-fractional derivatives is established for a generalized second-grade fluid. The flow is induced by a moving infinite plate and influenced by a magnetic field, radiation, and a heat source. The fractional MHD coupled flow and heat transfer model is written as follows:
\begin{equation}
\begin{aligned}
\frac{\partial u}{\partial t}=&\left(\upsilon+\frac{\alpha_{1}}{\rho}\ {}_{~~0}^{RL}D_{t}^{\gamma}\right) \frac{\partial^{2} u}{\partial z^{2}}+\frac{\sigma B_{0}^{2}}{\rho(1+m^{2})}(mv-u)-\frac{\phi}{k_{1}}\left(\upsilon+\frac{\alpha_{1}}{\rho}\ {_{~~0}^{RL}D_{t}^{\gamma}}\right) u +g \beta_{T}\left(T-T_{\infty}\right),
\end{aligned}
\label{01}
\end{equation}
\begin{equation}
\begin{aligned}
&\frac{\partial v}{\partial t}=\left(\upsilon+\frac{\alpha_{1}}{\rho}\ { }_{~~0}^{RL}D_{t}^{\gamma}\right) \frac{\partial^{2} v}{\partial z^{2}}-\frac{\sigma B_{0}^{2}}{\rho(1+m^{2})}(v+m u)
-\frac{\phi}{k_{1}}\left(\upsilon+\frac{\alpha_{1}}{\rho}\ {_{~~0}^{RL}D_{t}^{\gamma}}\right) v,
\end{aligned}
\label{02}
\end{equation}
\begin{equation}
\begin{aligned}
\left(1+\lambda_{1}\ {_{~~0}^{RL}{D}_{t}^{\beta}}\right) \frac{\partial T}{\partial t}=&\frac{k_0}{\rho c_{p}} \frac{\partial^{2} T}{\partial z^{2}}+\frac{1}{\rho c_{p}}\frac{16 \sigma^{*} T_{\infty}^{3}}{3 k^{*}} \frac{\partial^{2} T}{\partial z^{2}}+\frac{Q_{0}}{\rho c_{p}}\left(1+\lambda_{1}\ {_{~~0}^{RL}{D}_{t}^{\beta}}\right)\left(T-T_{\infty}\right),
\end{aligned}
\label{03}
\end{equation}
where $\upsilon$ is the kinematic viscosity coefficient, $\alpha_1$ is the fractional viscoelastic coefficient, $\rho$ is the fluid density, $\phi$ is the porosity of the porous medium, $k_1$ is the permeability of the porous medium, $\sigma$ is the electrical conductivity of the fluid, $B_0$ is the uniform magnetic field, $m$ is the Hall parameter, $g$ is the acceleration due to gravity, $\beta_T$ is the volumetric coefficient of thermal expansion, $T_{\infty}$ is the constant temperature, $\lambda_1$ is the relaxation time, $k_0$ is the thermal conductivity, $c_\rho$ is the specific heat capacity, $\sigma^{\ast} $ is the Stefan--Boltzmann constant, $k^{\ast} $ is the mean absorption coefficient, and $Q_0$ is the heat absorption/generation coefficient.  $_{~~0}^{RL}D^{\gamma}_{t}$, $_{~~0}^{RL}D^{\beta}_{t} (0<\gamma, \beta<1)$ are the Riemann--Liouville fractional derivatives.
%\begin{eqnarray*}
%_{~~0}^{RL}D^{\gamma}_{t}u(t)=\frac{1}{\Gamma(1-\gamma)}\frac{d}{dt}\int^t_0\frac{u(\xi)}{(t-\xi)^{\gamma}}d\xi,\quad t>0.
%\end{eqnarray*}
A detailed discussion of this model is presented in Section \ref{sec:2}.
%Without loss of generality, we can take the model (\ref{01})-(\ref{03}) as the following multi-term time fractional coupled equations
%\begin{equation}
%\begin{aligned}
%\frac{\partial u}{\partial t}+ a_1\ {_{~~0}^{RL}D_t^{\gamma}}u-a_2\ {_{~~0}^{RL}D_t^{\gamma}}\frac{\partial^{2} u}{\partial z^{2}}-\frac{\partial^{2} u}{\partial z^{2}}+a_3u-a_4v-a_5\theta=f(z,t),
%\end{aligned}
%\label{1.1}
%\end{equation}
%\begin{equation}
%\begin{aligned}
%\frac{\partial v}{\partial t}+a_1\ {}_{~~0}^{RL}D_t^{\gamma}v-a_2\ {_{~~0}^{RL}D_t^{\gamma}}\frac{\partial^{2} v}{\partial z^{2}}-\frac{\partial^{2} u}{\partial z^{2}}+a_3v+a_4u=g(z,t),
%\label{1.2}
%\end{aligned}
%\end{equation}
%\begin{equation}
%\begin{aligned}
%b_1 \frac{\partial \theta}{\partial t}+\ {_{~~0}^{RL}D_t^{\beta}}\frac{\partial \theta}{\partial t}-b_2\frac{\partial^2 \theta}{\partial z^2}-b_3\theta-b_4\ {_{~~0}^{RL}D_t^{\beta}}\theta=p(z,t),
%\end{aligned}
%\label{1.3}
%\end{equation}
%with the following initial and boundary conditions
%\begin{equation}
%u(z,0)=u_0(z), v(z,0)=v_0(z), \theta(z,0)=\theta_0(z),\quad z\in[0,L],
%\label{1.4}
%\end{equation}
%\begin{equation}
%u(0,t)=0, v(0,t)=0, \theta(0,t)=0, u(L,t)=0, v(L,t)=0, \theta(L,t)=0, \quad t\in[0,t_0],
%\label{1.5}
%\end{equation}
%where $a_1, a_2, a_3, a_4, a_5, b_1, b_2, b_3,b_4 \ge 0$, $0<\gamma, \beta<1$, The Riemann-Liouville fractional derivatives $_{~~0}^{RL}D^{\gamma}_{t}$, $_{~~0}^{RL}D^{\beta}_{t}$ are defined as
%\begin{eqnarray*}
%_{~~0}^{RL}D^{\gamma}_{t}g(t)=\frac{1}{\Gamma(1-\gamma)}\frac{d}{dt}\int^t_0\frac{g(\xi)}{(t-\xi)^{\gamma}}d\xi,\quad t>0.
%\end{eqnarray*}

There are many effective numerical algorithms for solving time-fractional partial differential equations \cite{Zeng,Li2019,Chen,Liao,Jiang2019}, some of which have been developed to describe the dynamic behavior of fractional MHD fluids. Anwar and Rasheed \cite{Anwar} proposed a finite element--finite difference algorithm for solving a time-fractional Cattaneo--Maxwell model, while Cao et al. \cite{Cao} studied the MHD flow and heat transfer of a fractional Maxwell viscoelastic nanofluid over a moving plate using a finite difference method combined with an $L1$  algorithm. Jiang et al. \cite{Jiang} developed a second-order fractional backward difference spectral collocation scheme for solving the unsteady MHD flow of a generalized second-grade fluid through a porous medium. Although there are many numerical algorithms for simulating fractional MHD flows, the theoretical analysis for these schemes is limited.

Note that the time-fractional derivative operators are non-local, which can cause a lot of computational difficulties. Generally, the fractional derivative operator $_{~~0}^{RL}D^{\gamma}_{t} u$ at $t=t_k$ is usually approximated by a discrete convolution of the form \cite{Lubich,Sun1}
\begin{eqnarray}
_{~~0}^{RL}D^{\gamma}_{t} u^k=\frac{1}{\tau^{\gamma}}\sum_{j=0}^{k} \omega^{(\gamma)}_{k-j} u^{j}, \quad 0 \leq j \leq k, 0<k\leq \bar{K},
\end{eqnarray}
where $\tau$ is the time step and $\omega^{(\gamma)}_{k}$ is the convolution quadrature weight. Computing this convolution directly requires $O(\bar{K} )$ active memory and $O({\bar{K}}^2)$ operations, which is computationally costly. To overcome this disadvantage, some fast memory-saving algorithms have been proposed~\cite{Baffet,Jiang2,Sun,Zeng2,Guo}. In this paper, we propose a fast method \cite{Guo} in which the quadrature weight $\omega_{k}^{(\gamma)}$ is taken as an integral on the half line by the Hankel contour, which improves the computational efficiency and reduces the memory cost. Convergence analysis for fast methods is usually complicated. However, the convergence analysis of a fast time-stepping numerical method for the time-fractional nonlinear sub-diffusion equation has been carried out in a relatively simple manner \cite{39}. Inspired by \cite{39}, we focus on the convergence analysis of the fast method for the coupled model of (\ref{01})--(\ref{03}) with multiple time-fractional derivatives.
%Further, the convergence analysis of the numerical method with the fast method for this fractional coupled model is proved.
%Our proof is based on the convergence of the direct computational method, which is simpler than that of the existing fast methods.

This work is devoted to developing an efficient numerical algorithm for the fractional MHD coupled flow and heat transfer model given by (\ref{01})--(\ref{03}). The stability and convergence of the numerical scheme are proved both with and without the fast method, providing original research into MHD flow and heat transfer. The main contributions of this paper are as follows:

$\bullet$ First, we establish a time-fractional MHD coupled flow and heat transfer model describing the flow and heat transfer behavior of a generalized second-grade fluid. The motion equation combined with the fractional constitutive relation and the energy equation with a modified form of Fourier law are coupled with each other.
%which consideres the effects of the uniform magnetic field, heat source and radiative heat

$\bullet$ We provide the second-order fractional backward difference formula with the Legendre spectral method to solve this coupled model. Stability and convergence analysis are carried out, and it is shown that the numerical scheme can achieve an optimal error estimate of $O(\tau^2+N^{-r})$. %where $N$ is the polynomial degree.

$\bullet$ A fast method that reduces the computation time and memory requirements is also developed. The strict convergence of the numerical scheme with this fast method is proved.
The proof is based on the convergence of the direct method, which is simpler than that of existing fast methods.
%As we all know the convergence analysis for the fast method usually is complicated and relatively less. In \cite{39}, the convergence analysis of a fast time-stepping numerical methods for the time fractional nonlinear subdiffusion equation has been implemented in a simple manner. Inspired by \cite{39}, we make our effects to carry out the convergence analysis of the fast method for the coupled model with multiple time fractional derivatives.
%In \cite{39}, the related work is implemented, which is based on the convergence of the direct computational method. Inspired by \cite{39}, we make our effects to carry out the convergence analysis of the fast method for the established coupled model with multi-term time fractional derivatives.

$\bullet$ We numerically simulate the unsteady MHD flow and heat transfer of a generalized second-grade fluid with a Hall current passing through a porous medium near a vertical infinite plate. According to the results, the effects of relevant parameters on MHD flow and heat transfer are analyzed.
%the unsteady free convection MHD flow and heat transfer of a generalized second order fluid with Hall current passing through a porous medium near a vertical infinite plate.

The remainder of this paper is organized as follows. Section \ref{sec:2} describes the fractional MHD coupled flow and heat transfer model for a generalized second-grade fluid. In Section \ref{sec:3}, some useful definitions and notation are given, and then the fully discrete spectral scheme is presented. Section \ref{sec:4} presents the stability and convergence analysis of the proposed numerical scheme. In Section \ref{sec:5}, we further develop the numerical scheme with a fast method and analyze the convergence of this scheme. In Section \ref{sec:6}, we verify the effectiveness of the numerical method through specific examples and numerically simulate the unsteady MHD flow and heat transfer of a generalized second-grade fluid through a porous medium. Finally, the conclusions to this study are summarized in Section \ref{sec:7}.

\section{Formulation of the flow and heat transfer problem}
\label{sec:2}
We study the unsteady MHD flow and heat transfer of a generalized second-grade fluid with a Hall current passing through a porous medium near a vertical infinite plate. The geometry of the problem is shown in Figure \ref{fig1}. The $x$-axis is taken along the plate, and the $z$-axis is perpendicular to the plate. We apply a magnetic field $\mathbf{B}$ that is horizontally parallel to the $z$-axis. Initially, the fluid and the plate are both at rest with a constant temperature of $T_{\infty}$. {When $t\ge 0$, we apply a force along the $x$-axis to the plate so that the plate starts to move up with velocity $U_0t^3$, then the fluid starts to move by the shear force,} and the temperature of the plate either rises or falls to $T_{\infty}+(T_w-T_{\infty})t^2/t_0^2$. For $t\ge t_0$, the plate is maintained at a constant temperature of $T_w$. We also assume that the incompressible magnetic fluid obeys the Boussinesq approximation and we neglect electromagnetic induction.

\begin{figure}[htbp]
\centering
\includegraphics[width=6.0cm]{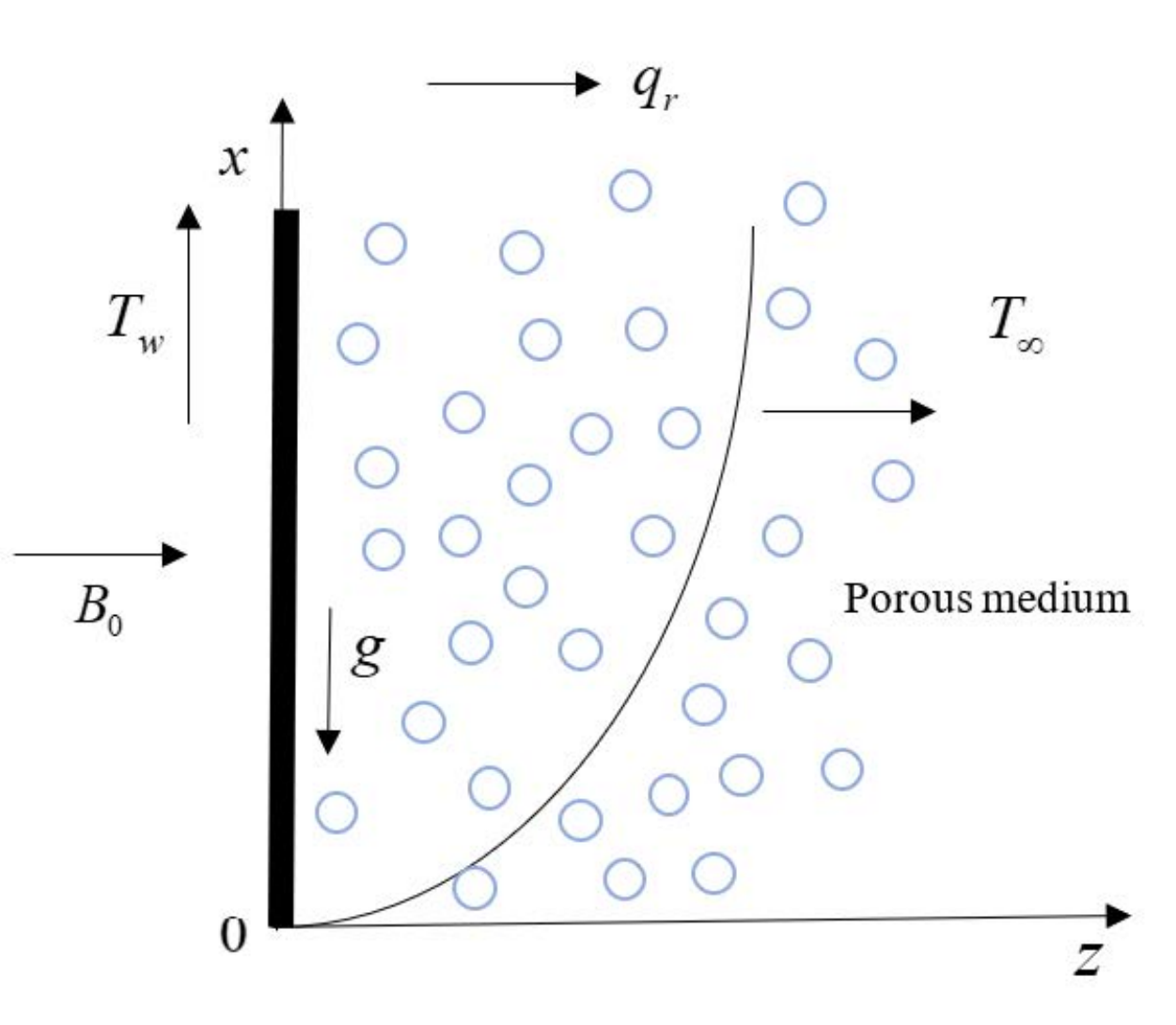}
\caption{ Geometry of the problem.}
\label{fig1}
\end{figure}

Considering the magnetic field and thermodiffusion, the momentum equation of the unsteady incompressible MHD flow in a porous medium is \cite{Jiang}
\begin{eqnarray}
\nabla \cdot\mathbf{V}=0,
\label{1}
\end{eqnarray}
\begin{equation}
\rho\left(\frac{\partial \mathbf{V}}{\partial t}+(\mathbf{V} \cdot \nabla) \mathbf{V}\right)=\nabla \cdot \mathbf{S}+\mathbf{J} \times \mathbf{B}+\rho \beta\mathbf{g} \left(T-T_{\infty}\right)+\mathbf{R},
\label{2}
\end{equation}
where $\mathbf{V}$ is the velocity vector, $\mathbf{S}$ is the Cauchy stress tensor, $\mathbf{J}$ is the current density vector, $\mathbf{B}$ is the magnetic field vector, $\mathbf{g}$ is the gravitational acceleration vector, $\rho$ is the density of the fluid, $\beta$ is the coefficient of thermal expansion, and $\mathbf{R}$ is Darcy's resistance in the porous medium.

%The constitutive relation of the generalized second-grade fluid can be expressed as follows \cite{25,26}:
%\begin{equation}
%\widetilde{S}(t)=\mu \varepsilon(t)+\alpha_{1}\ {^{RL}_{0}D_{t}^{\gamma}} \varepsilon(t),
%\end{equation}
%where $\alpha_1$ is the fractional viscoelastic coefficient.
Note that the velocity field is denoted by $\mathbf{V}=(u(z,t),v(z,t),0)$,  and we obtain the constitutive equation from (\ref{xj140}) as
\begin{equation}
\begin{array}{r}
\widetilde{S}_{x z}=\left(\mu+\alpha_{1}\ { }_{~~0} ^{RL}D_{t}^{\gamma}\right) \frac{\partial u}{\partial z}, \quad \widetilde{S}_{y z}=\left(\mu+\alpha_{1}\ { }_{~~0} ^{RL}D_{t}^{\gamma}\right) \frac{\partial v}{\partial z}, \\
\end{array}
\end{equation}
with $\widetilde{S}_{x x}=\widetilde{S}_{x y}=\widetilde{S}_{y y}=\widetilde{S}_{z z}=0$.

Based on the relation between the pressure gradient and the Darcian velocity, Darcy's resistance $\mathbf{R}$ for the generalized second-grade fluid in an unbounded porous medium is \cite{ey4}
\begin{equation}
\mathbf{R}=-\frac{\phi}{k_{1}}\left(\mu+\alpha_{1}\ {} _{~~0}^{RL}D_{t}^{\gamma}\right) \mathbf{V},
\end{equation}
where $\phi$ is the porosity and $k_1$ is the permeability of the porous medium.

The generalized Ohm's law with Hall effects can be written as \cite{28}
\begin{equation}
\mathbf{J}=\sigma(\mathbf{E}+\mathbf{V} \times \mathbf{B})-\frac{\sigma}{e n_{e}} \mathbf{J}\times \mathbf{B},
\label{4}
\end{equation}
where $\mathbf{E}$ is the electric field vector, $\sigma$ is the conductivity of the fluid, $e$ is the electric charge, and $n_e$ is the number density of electrons. In (\ref{4}), the electron pressure gradient, ion slip, and thermoelectric effect are all ignored. Furthermore, the magnetic field is given by $\mathbf{B}=(0,0,B_0)$ and we assume the electric field $\mathbf{E}=\mathbf{0}$. Then, $\mathbf{J}\times \mathbf{B}$ is calculated as
\begin{equation}
\mathbf{J}\times \mathbf{B}=\left(\frac{\sigma B_{0}^{2}}{1+m^{2}}(mv-u),-\frac{\sigma B_{0}^{2}}{1+m^{2}}(v+mu),0\right),
\label{5}
\end{equation}
where $m=\frac{\sigma B_0}{en_e}$ is the Hall parameter.

From (\ref{1})--(\ref{5}), the governing equation of momentum for the generalized second-grade fluid in a porous medium is
\begin{equation}
\begin{aligned}
\frac{\partial u}{\partial t}=&\left(\upsilon+\frac{\alpha_{1}}{\rho}{ }\ _{~~0}^{RL}D_{t}^{\gamma}\right) \frac{\partial^{2} u}{\partial z^{2}}+\frac{\sigma B_{0}^{2}}{\rho(1+m^{2})}(m v-u)-\frac{\phi}{k_{1}}\left(\upsilon+\frac{\alpha_{1}}{\rho}{ }\ _{~~0}^{RL}D_{t}^{\gamma}\right) u +g \beta_{T}\left(T-T_{\infty}\right),
\end{aligned}
\label{6e}
\end{equation}
\begin{equation}
\begin{aligned}
\frac{\partial v}{\partial t}&=\left(\upsilon+\frac{\alpha_{1}}{\rho}{ }\ _{~~0}^{RL} D_{t}^{\gamma}\right) \frac{\partial^{2} v}{\partial z^{2}}-\frac{\sigma B_{0}^{2}}{\rho(1+m^{2})}(v+m u)
-\frac{\phi}{k_{1}}\left(\upsilon+\frac{\alpha_{1}}{\rho}{ }\ _{~~0}^{RL} D_{t}^{\gamma}\right) v,
\end{aligned}
\label{6}
\end{equation}
where $\upsilon=\frac{\mu}{\rho}$ is the kinematic viscosity coefficient, and the pressure  effect in the flow is ignored .

The energy equation for temperature can be expressed as
\begin{equation}
\frac{\partial T}{\partial t}=-\frac{1}{\rho c_{p}} \nabla q+\frac{Q_{0}}{\rho c_{p}}\left(T-T_{\infty}\right),
\label{7}
\end{equation}
where $q$ is the heat flux, $Q_0$ is the heat generation/absorption coefficient, and $c_\rho$ is the specific heat capacity.

The generalized form of Fourier law of heat conduction involving the radiation heat effect $q_r$ can be written as \cite{29}
\begin{equation}
\left(1+\lambda_{1}\ {_{~~0}^{RL}D_{t}^{\beta}}\right) q=-k_0 \frac{\partial T}{\partial z}+q_r,
\label{8}
\end{equation}
where $k_0$ is the thermal conductivity and $\lambda_1$ is the relaxation time of temperature.

%\begin{equation}
%\left(1+\lambda_{1} \mathrm{D}_{t}^{\beta}\right) \frac{\partial T}{\partial t}=\frac{k}{\rho c_{p}} \frac{\partial^{2} T}{\partial z^{2}}-\frac{1}{\rho c_{p}} \frac{\partial q_{\mathrm{r}}}{\partial z}+\frac{Q_{0}}{\rho c_{p}}\left(1+\lambda_{1} \mathrm{D}_{t}^{\beta}\right)\left(T-T_{\infty}\right),
%\label{9}
%\end{equation}

According to the Rosseland approximation, the radiative heat flux $q_{r}$ is \cite{30,31}
\begin{equation}
q_{r}=-\frac{4 \sigma^{*}}{3 k^{*}} \frac{\partial T^{4}}{\partial z},
\label{10}
\end{equation}
where $\sigma^{\ast} $ is the Stefan--Boltzmann constant and $k^{\ast} $ is the mean absorption coefficient.

We assume that the temperature difference within the flow is very small, so that $T^{4}$ can be expressed as a linear function of $T$. Here, $T^{4}$ can be expanded as a Taylor series expansion around $T_{\infty}$ without higher-order terms:
\begin{equation}
T^{4}=4 T_{\infty}^{3} T-3 T_{\infty}^{4}.
\label{11}
\end{equation}
%Thus, it results in
%\begin{equation}
%\frac{\partial q_{r}}{\partial z}=-\frac{16 \sigma^{*} T_{\infty}^{3}}{3 k^{*}} \frac{\partial^{2} T}{\partial z^{2}},
%\end{equation}

Combining (\ref{7})--(\ref{11}), we obtain the following time-fractional heat conduction equation:
\begin{equation}
\begin{aligned}
\left(1+\lambda_{1}\ {_{~~0}^{RL}D_{t}^{\beta}}\right) \frac{\partial T}{\partial t}=&\frac{k_0}{\rho c_{p}} \frac{\partial^{2} T}{\partial z^{2}}+\frac{1}{\rho c_{p}}\frac{16 \sigma^{*} T_{\infty}^{3}}{3 k^{*}} \frac{\partial^{2} T}{\partial z^{2}}+\frac{Q_{0}}{\rho c_{p}}\left(1+\lambda_{1}\ {_{~~0}^{RL}D_{t}^{\beta}}\right)\left(T-T_{\infty}\right).
\end{aligned}
\label{12}
\end{equation}

The initial and boundary conditions of (\ref{6e})--(\ref{6}) and (\ref{12}) are
\begin{equation}
\begin{array}{l}
u=0, v=0, T=T_{\infty}, \text { as } z \geq 0 \text { and } t=0,
\end{array}
\end{equation}
\begin{equation}
\begin{array}{l}
u=U_{0} t^{3}, v=0, \text { as } z=0 \text { and } t>0,
\end{array}
\end{equation}
\begin{equation}
T=\left\{\begin{array}{ll}
T_{\infty}+\left(T_{w}-T_{\infty}\right) t^2 / t_{0}^2 & \text { if } 0<t<t_0, {z=0}, \\
T_{w} & \text { if } t \geq t_{0}, {z=0},
\end{array}\right.
%, \text{ as }  z=0
\end{equation}
\begin{equation}
\begin{array}{l}
u \rightarrow 0, v \rightarrow 0, T \rightarrow T_{\infty}, \text { as } z \rightarrow \infty \text { and } t>0.
\end{array}
\end{equation}

We now introduce the following dimensionless variables:
$$
\widehat{z}=\frac{U_{0} t_{0}^{3} z}{\upsilon}, \quad \widehat{u}=\frac{u}{t_{0}^{3} U_{0}},\quad \widehat{v}=\frac{v}{t_{0}^{3} U_{0}},\quad \widehat{t}=\frac{t}{t_{0}},\quad
\theta=\frac{T-T_{\infty}}{T_{w}-T_{\infty}}, \quad M^{2}=\frac{\sigma B_{0}^{2} t_{0}}{\rho},  \quad K=\frac{k_{1}}{\phi \upsilon t_{0}},$$
$$
\alpha=\frac{\alpha_{1}}{\mu t_{0}^{\gamma}}, \quad \lambda=\frac{\lambda_{1}}{ t_{0}^{\beta}},\quad Gr=\frac{g \beta_{T}\left(T_{w}-T_{\infty}\right)}{U_{0} t_{0}^2},
\quad R=\frac{16 \sigma^{*} T_{\infty}^{3}}{3 k_0 k^{*}}, \quad H=\frac{Q_{0} v t_{0}}{k_0}, \quad Pr=\frac{\rho C_{p} \upsilon}{k_0},
$$
where $M$ is the Hartmann number, $K$ is the permeability parameter, $Gr$ is the thermal Grashof number, $R$ is the thermal radiation parameter, $H$ is the heat absorption/generation parameter, and $Pr$ is the Prandtl number. For simplicity, $t_0=(\frac{\upsilon}{U^2_0})^{\frac{1}{7}}$. The fractional viscoelastic coefficient $\alpha_1$ can be denoted as $\alpha_1=\alpha_0 c_0$, where $c_0$ is a dimension-balancing coefficient with a dimension of $[t_0]^{\gamma-1}$. The dimension of the fractional relaxation time $\lambda_1$ can be regarded as $[t_0]^{\beta}$.

Applying the dimensionless variables, the governing equations of (\ref{6e})--(\ref{6}) and (\ref{12}) become (omitting $\,\widehat{}\,$ for simplicity)
\begin{equation}
\begin{aligned}
\frac{\partial u}{\partial t}&=\left(1+\alpha\ {_{~~0}^{RL}D_{t}^{\gamma}}\right) \frac{\partial^{2} u}{\partial z^{2}}+\frac{M^{2}}{1+m^{2}}(m v-u)
-\frac{1}{K}\left(1+\alpha\ {_{~~0}^{RL}D_{t}^{\gamma}}\right) u+Gr\theta,
\end{aligned}
\end{equation}
\begin{equation}
\begin{aligned}
\frac{\partial v}{\partial t}&=\left(1+\alpha\ {_{~~0}^{RL}D_{t}^{\gamma}}\right) \frac{\partial^{2} v}{\partial z^{2}}-\frac{M^{2}}{1+m^{2}}(v+m u)
-\frac{1}{K}\left(1+\alpha\ {_{~~0}^{RL}D_{t}^{\gamma}}\right) v,
\end{aligned}
\end{equation}
\begin{equation}
\begin{aligned}
(1+\lambda\ {_{~~0}^{RL}D_t^{\beta}})\frac{\partial \theta}{\partial t}&=\frac{1+R}{Pr}\frac{\partial^2 \theta}{\partial z^2}+\frac{H}{Pr}(1+\lambda\ {_{~~0}^{RL}D_t^{\beta}})\theta,
\end{aligned}
\end{equation}
with
\begin{equation}
u=0, v=0, \theta=0, \text { as } z \geq 0 \text { and } t=0,
\end{equation}
\begin{equation}
u=t^{3}, v=0, \text { as } z=0 \text { and } t > 0,
\end{equation}
\begin{equation}
\theta=\left\{\begin{array}{ll}
t^2 & \text { if } 0<t<1, {z=0},\\
1 & \text { if } t \geq 1, {z=0},
\end{array}\right.
\end{equation}
\begin{equation}
u \rightarrow 0, v \rightarrow 0, \theta \rightarrow 0, \text { as } z \rightarrow \infty \text { and } t > 0.
\end{equation}

Considering $t\in[0,\bar{T}]$, $z \in[0,L]$, where $0<\bar{T}\le1$ and $L$ is a constant corresponding to $z\to \infty$, we transform the non-homogeneous boundary conditions into homogeneous boundary conditions. Let
\begin{equation*}
\begin{aligned}
w(z,t)&=u(z,t)-t^3(1-\frac{z}{L}),\quad \widetilde{\theta}=\theta(z,t)-t^2(1-\frac{z}{L}),
\end{aligned}
\end{equation*}
so that $w(z,t), v(z,t),\widetilde{\theta}(z,t)$ satisfy the following system:
\begin{equation}
\begin{aligned}
&\frac{\partial w}{\partial t}-\left(1+\alpha\ {_{~~0}^{RL}D_{t}^{\gamma}}\right) \frac{\partial^{2} w}{\partial z^{2}}-\frac{M^{2}}{1+m^{2}}(m v-w)
+\frac{1}{K}\left(1+\alpha\ {_{~~0}^{RL}D_{t}^{\gamma}}\right) w-Gr\widetilde{\theta}=f(z,t),
\end{aligned}
\label{m1}
\end{equation}
\begin{equation}
\begin{aligned}
\frac{\partial v}{\partial t}-\left(1+\alpha\ {_{~~0}^{RL}D_{t}^{\gamma}}\right) \frac{\partial^{2} v}{\partial z^{2}}+\frac{M^{2}}{1+m^{2}}(v+m w)
+\frac{1}{K}\left(1+\alpha\ {_{~~0}^{RL}D_{t}^{\gamma}}\right) v=g(z,t),
\end{aligned}
\label{m2}
\end{equation}
\begin{equation}
\begin{aligned}
(1+\lambda\ {_{~~0}^{RL}D_{t}^{\beta}})\frac{\partial \widetilde{\theta}}{\partial t}-\frac{1+R}{Pr}\frac{\partial^2 \widetilde{\theta}}{\partial z^2}
-\frac{H}{Pr}(1+\lambda\ {_{~~0}^{RL}D_{t}^{\beta}})\widetilde{\theta}=p(z,t),
\end{aligned}
\label{m3}
\end{equation}
%\begin{equation}
%\begin{aligned}
%w(z,0)&=0, \quad v(z,0)=0,\quad \widetilde{\theta}(z,0)=0,
%\end{aligned}
%\end{equation}
%\begin{equation}
%\begin{aligned}
%w(0,t)&=0, \quad v(0,t)=0,\quad \widetilde{\theta}(0,t)=0,
%\end{aligned}
%\end{equation}
%\begin{equation}
%\begin{aligned}
%w(L,t)&=0, \quad v(L,t)=0,\quad \widetilde{\theta}(L,t)=0,
%\end{aligned}
%\end{equation}
\begin{equation}
w(z,0)=0, v(z,0)=0, \widetilde{\theta}(z,0)=0,\quad z\in[0,L],
\label{m4}
\end{equation}
\begin{equation}
w(0,t)=0, v(0,t)=0, \widetilde{\theta}(0,t)=0, \quad t\in[0,\bar{T}],
\label{m5}
\end{equation}
\begin{equation}
w(L,t)=0, v(L,t)=0, \widetilde{\theta}(L,t)=0, \quad t\in[0,\bar{T}],
\label{m6}
\end{equation}
where
\begin{eqnarray*}
\begin{split}
f(z,t)=&\left(-3t^2-\frac{6\alpha}{K\Gamma(4-\gamma)}t^{3-\gamma}-\frac{M^2}{1+m^2}t^3+\frac{1}{K}t^3
+Grt^2\right)(1-\frac{z}{L}),\\
g(z,t)=&-\frac{M^2m}{1+m^2}t^3(1-\frac{z}{L}),\\
p(z,t)=&\left(-2t+\-\frac{2\lambda}{\Gamma(2-\beta)}t^{1-\beta}+\frac{H}{Pr}t^{2}+\frac{2\lambda H}{Pr\Gamma(3-\beta)}t^{2-\beta}\right)(1-\frac{z}{L}).
%f(z,t)&=-2t(1-\frac{z}{L})-\frac{2\alpha}{K\Gamma(3-\gamma)}t^{2-\gamma}(1-\frac{z}{L})-\left(\frac{M^2}{1+m^2}+\frac{1}{K}\right)t^2(1-\frac{z}{L})+Grt(1-\frac{z}{L}),\\
%g(z,t)&=-\frac{M^2m}{1+m^2}t^2(1-\frac{z}{L}),\\
%p(z,t)&=-(1-\frac{z}{L})+\frac{H}{Pr}\frac{1}{\Gamma(2+\beta)}t^{1+\beta}(1-\frac{z}{L})-\frac{\lambda}{\Gamma(2-\beta)}t^{1-\beta}(1-\frac{z}{L})+\frac{\lambda H}{Pr}\frac{1}{2-\beta}t^{1-\beta}(1-\frac{z}{L}).
\end{split}
\end{eqnarray*}

%Without loss of generality, we simplified the MHD flow and transfer problem (\ref{m1})-(\ref{m3}) to the following multi-term time fractional coupled equations

%\begin{rmrk}
%Note that the problem (\ref{m1})-(\ref{m3}) can be regarded as a special form of the model (\ref{1.1})-(\ref{1.5}). And we just devote to develop an efficient numerical schemes for the model (\ref{1.1})-(\ref{1.5})throughout the rest of this paper.
%%Without ambiguity, we replace the notation, $w, \widetilde{\theta}$ with $u, {\theta}$ respectively,
%\end{rmrk}
\section{Numerical method}
\label{sec:3}
\subsection{Preliminaries and notation}
Denoting $\Omega=[0,L]$, we define $L^2(\Omega)$ as an $L^2$-space with the inner product
$(u,v)_\Omega=\int_\Omega uv dz$ and the norm $\lVert u\lVert_\Omega=\bigg(\int_\Omega u^2 dz\bigg)^{\frac{1}{2}}$. When not confusing, we usually omit the subscript $\Omega$.
$H^l(\Omega)$ is the usual Sobolev space defined as $H^l(\Omega)=\{u\lvert u\in L^2(\Omega),\frac{\partial^k u}{\partial z^k}\in L^2(\Omega), 1\le k\le l\}$, and its norm is denoted by $$\lVert u\rVert_{l,\Omega}= \bigg(\|u\|^2+\sum\limits_{k=1}^l\left\|\frac{\partial^k u}{\partial z^k}\right\|^2 \bigg).$$
Furthermore, $ H_{0}^{1}(\Omega)=\left\{u \lvert u\in H^{1}(\Omega), u|_{\partial \Omega}=0\right\}$.

Let $Y$ be a Banach space,and the space $L^{2}(0,\bar{T};Y)$ is defined as
\begin{equation*}
L^{2}(0,\bar{T};Y)=\bigg\{v:(0,\bar{T})\to Y \bigg| \int^{\bar{T}}_0 \lVert v\rVert^2_Y dt <+\infty\bigg\},
\end{equation*}
and is endowed with the norm
$$
\lVert v\rVert_{L^{2}(Y)}= \big( \int^{\bar{T}}_0 \lVert v\rVert^2_Y dt\big)^{\frac{1}{2}}<+\infty.
$$

Let $N$ be a positive integer, and let $P_N(\Omega)$ be the space of all polynomials with a degree no greater than $N$. Then, the approximation space
%$$\widehat{P}_{N}=\operatorname{span}\left\{{{L}}_{i}(z), i=0,1, \ldots, N\right\},$$
%where $ L_i(z)$ is Legendre polynomial, and $N$ are positive integers.
$V^0_{N}$ is defined as
$$
V_{N}^0=P_{N}(\Omega)\cap H^1_0(\Omega).
$$
We define $\Pi_{N}^{1,0}$ as the $H_{0}^{1}$-orthogonal projection operator $H_{0}^{1}(\Omega)$ $\to$ $V_{N}^{0}$ such that, for all $u \in H_{0}^{1}(\Omega)$, we have
$$
\left(\partial_{z}\left(\Pi_{N}^{1,0} u-u\right), \partial_{z} \varphi\right)=0, \quad \forall \varphi \in V_{N}^{0}.
$$

For the projection operator $\Pi_{N}^{1,0}$, we have the following approximation result.
\begin{lemma}\cite{40}
For all $u \in H_{0}^{1}(\Omega) \cap H^{r}(\Omega)$, we have
$$\|u-\Pi_{N}^{1,0} u\|_{l, \Omega} \leq C N^{l-r}\|u\|_{r, \Omega}, \quad l=0,1, r \geq 1,$$
where $C$ is a positive constant that is independent of $N$.
\label{le3.1}
\end{lemma}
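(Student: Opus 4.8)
The plan is to exploit the variational characterization of $\Pi_N^{1,0}$. Its defining orthogonality relation says precisely that $\Pi_N^{1,0}u$ is the best approximation of $u$ from $V_N^0$ in the $H_0^1$-seminorm, i.e.
\[
\|\partial_z(u-\Pi_N^{1,0}u)\|=\inf_{\varphi\in V_N^0}\|\partial_z(u-\varphi)\|,
\]
since for any $\varphi\in V_N^0$ the cross term $(\partial_z(u-\Pi_N^{1,0}u),\partial_z(\Pi_N^{1,0}u-\varphi))$ vanishes. First I would construct an explicit competitor. Because $u\in H_0^1(\Omega)$, its derivative $u'$ lies in $L^2(\Omega)$ with $\int_\Omega u'\,dz=u(L)-u(0)=0$. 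Letting $\pi_{N-1}$ denote the $L^2(\Omega)$-orthogonal projection onto $P_{N-1}(\Omega)$, set $\varphi(z):=\int_0^z(\pi_{N-1}u')(s)\,ds$. Then $\varphi\in P_N(\Omega)$, $\varphi(0)=0$, and since $1\in P_{N-1}(\Omega)$ we get $\varphi(L)=\int_\Omega\pi_{N-1}u'\,dz=\int_\Omega u'\,dz=0$; hence $\varphi\in V_N^0$ is an admissible test function.

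Second, I would bound the seminorm error through a one-dimensional $L^2$-projection estimate for Legendre expansions: $\|\partial_z(u-\varphi)\|=\|u'-\pi_{N-1}u'\|\le CN^{1-r}\|u'\|_{r-1,\Omega}\le CN^{1-r}\|u\|_{r,\Omega}$, which I would quote from the same spectral-approximation theory as \cite{40}. Combined with the best-approximation identity this gives $\|\partial_z(u-\Pi_N^{1,0}u)\|\le CN^{1-r}\|u\|_{r,\Omega}$. The Poincaré inequality on $\Omega$ then yields $\|u-\Pi_N^{1,0}u\|\le C\|\partial_z(u-\Pi_N^{1,0}u)\|$, so that $\|u-\Pi_N^{1,0}u\|_{1,\Omega}\le CN^{1-r}\|u\|_{r,\Omega}$, which settles the case $l=1$.

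Third, for the sharper $L^2$-estimate ($l=0$) I would run an Aubin--Nitsche duality argument. Write $e=u-\Pi_N^{1,0}u$ and let $w\in H^2(\Omega)\cap H_0^1(\Omega)$ solve $-w''=e$ with homogeneous Dirichlet data, so $\|w\|_{2,\Omega}\le C\|e\|$. Using the Galerkin orthogonality $(\partial_z e,\partial_z\psi)=0$ for all $\psi\in V_N^0$ — in particular for $\psi(z)=\int_0^z(\pi_{N-1}w')(s)\,ds\in V_N^0$, admissible by the same boundary check as above — integration by parts gives
\[
\|e\|^2=(e,-w'')=(\partial_z e,\partial_z w)=(\partial_z e,\partial_z(w-\psi))\le\|\partial_z e\|\,\|w'-\pi_{N-1}w'\|\le\|\partial_z e\|\cdot CN^{-1}\|w\|_{2,\Omega}.
\]
Inserting $\|\partial_z e\|\le CN^{1-r}\|u\|_{r,\Omega}$ and $\|w\|_{2,\Omega}\le C\|e\|$ and cancelling one factor of $\|e\|$ yields $\|e\|\le CN^{-r}\|u\|_{r,\Omega}$, completing the case $l=0$.

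The integration by parts and the Poincaré bound are routine; the ingredient doing the real work — and the step I would single out as the crux — is the one-dimensional Legendre $L^2$-projection error estimate $\|g-\pi_{N-1}g\|\le CN^{1-r}\|g\|_{r-1,\Omega}$, whose proof relies on the spectral decay of Legendre coefficients of $H^{r-1}$ functions together with Jacobi-polynomial inverse inequalities. Since the lemma is quoted verbatim from \cite{40}, I would simply cite that estimate rather than reproduce it, and handle non-integer $r$ by interpolation between consecutive integer Sobolev orders.
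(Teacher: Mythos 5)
Your argument is correct, and it is essentially the standard proof of this approximation result (best approximation in the $H_0^1$-seminorm via the antiderivative of the projected derivative, plus an Aubin--Nitsche duality step for the $L^2$ bound) as found in the cited reference \cite{40}; the paper itself gives no proof and simply quotes the lemma. Nothing further is needed.
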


We now introduce some definitions and properties of fractional operators.
For a given function $u(t)$ and $s>0$, the left-sided Riemann--Liouville fractional integral operator of order $s$ is defined as \cite{33}
\begin{eqnarray}
_{~~0}^{RL}D^{-s}_{t}u(t)=\frac{1}{\Gamma(s)}\int^t_0\frac{u(\xi)}{(t-\xi)^{1-s}}d\xi,\quad t>0.
\end{eqnarray}
For $m-1<s<m$, the left-sided Riemann--Liouville fractional derivative  operator of order $s$ is defined as \cite{33}
\begin{eqnarray}
_{~~0}^{RL}D^{s}_{t}u(t)=\frac{1}{\Gamma(m-s)}\frac{d^m}{dt^m}\int^t_0\frac{u(\xi)}{(t-\xi)^{s+1-m}}d\xi,\quad t>0.
\end{eqnarray}
For $m-1<s<m$, the left-sided Caputo fractional derivative  operator of order $s$ is defined as \cite{33}
\begin{eqnarray}
_{0}^{C}D^{s}_{t}u(t)=\frac{1}{\Gamma(m-s)}\int^t_0\frac{u^{(m)}(\xi)}{(t-\xi)^{s+1-m}}d\xi,\quad t>0.
\end{eqnarray}
We also give the following useful properties of fractional calculations \cite{34}:
\begin{eqnarray}
_{~~0}^{RL}D^{-s}_{t}{_{0}^{C}D^{s}_{t}}u(t)=u(t)-\sum^{m-1}_{k=0}\frac{u^{(k)}(0)}{k!}t^k, \quad s >0,
\label{x1}
\end{eqnarray}
\begin{eqnarray}
_{~~0}^{RL}D^{-s}_{t}{_{~~0}^{RL}D^{\eta}_{t}}u(t)={_{~~0}^{RL}D^{-s+\eta}_{t}}u(t),\quad s >0, 0<\eta<1 \text{~or~} s >0, 1<\eta<2, u(0)=0.
\label{x2}
\end{eqnarray}

\subsection{The fully discrete  scheme}
For simplicity, we replace the notations $w, \widetilde{\theta}$ with $u, {\theta}$ in (\ref{m1})--(\ref{m6}), and apply $_{~~0}^{RL}D_t^{-\beta}$ to (\ref{m3}). By the properties stated in (\ref{x1})--(\ref{x2}), we can simplify the MHD flow and heat transfer problem in (\ref{m1})--(\ref{m3}) to the following time-fractional coupled equations:
\begin{equation}
\begin{aligned}
\frac{\partial u}{\partial t}+ a_1\ {_{~~0}^{RL}D_t^{\gamma}}u-a_2\ {_{~~0}^{RL}D_t^{\gamma}}\frac{\partial^{2} u}{\partial z^{2}}-\frac{\partial^{2} u}{\partial z^{2}}+a_3u-a_4v-a_5\theta=f(z,t),
\end{aligned}
\label{1.1}
\end{equation}
\begin{equation}
\begin{aligned}
\frac{\partial v}{\partial t}+a_1\ {}_{~~0}^{RL}D_t^{\gamma}v-a_2\ {_{~~0}^{RL}D_t^{\gamma}}\frac{\partial^{2} v}{\partial z^{2}}-\frac{\partial^{2} u}{\partial z^{2}}+a_3v+a_4u=g(z,t),
\label{1.2}
\end{aligned}
\end{equation}
\begin{equation}
\begin{aligned}
\frac{\partial\theta}{\partial t}+b_1{_{~~0}^{RL}D_t^{1-\beta}}\theta-b_2{_{~~0}^{RL}D_t^{-\beta}}\frac{\partial^{2} \theta}{\partial z^{2}}-b_3{_{~~0}^{RL}D_t^{-\beta}}\theta-b_4\theta=\widetilde{p}(z,t),
\end{aligned}
\label{1.3}
\end{equation}
where $a_1=\frac{\alpha}{K}, a_2=\alpha, a_3=\frac{M^2}{1+m^2}+\frac{1}{K}, a_4=\frac{M^2m}{1+m^2}, a_5=Gr, b_1=\frac{1}{\lambda}, b_2=\frac{1+R}{Pr\lambda}, b_3=\frac{H}{Pr\lambda},b_4=\frac{H}{Pr}$, and $\widetilde{p}(z,t)={_{~~0}^{RL}D_t^{-\beta}}p(z,t)$.

Let $\bar{K}$ denote a positive integer, and $t_k=k\tau,k=1,\dots, \bar{K}$, where $\tau=\bar{T}/\bar{K}$ is the time step size. Taking $u^k=u(\cdot,t_k)$, we have the following approximation result for the first-order derivative:
\begin{eqnarray}
\left[\frac{\partial u}{\partial t}\right]_{t=t_{k}}=\left\{ \begin{array}{ll}
\frac{u^1-u^0}{\tau}+O(\tau), & k=1,\\
\frac{3u^k-4u^{k-1}+u^{k-2}}{2\tau}+O(\tau^2), & k\ge2.
\end{array} \right.
\end{eqnarray}
%\begin{eqnarray*}
%\left[\frac{\partial v}{\partial t}\right]_{t=t_{k}}=\left\{ \begin{array}{ll}
%\partial_tv^{k}+O(\tau^2), & k\ge2,\\
%\frac{v^1-v^0}{\tau}+O(\tau), & k=1,
%\end{array} \right.
%\end{eqnarray*}
%\begin{eqnarray*}
%\left[\frac{\partial \theta}{\partial t}\right]_{t=t_{k}}=\left\{ \begin{array}{ll}
%\partial_t\theta^{k}+O(\tau^2), & k\ge2,\\
%\frac{\theta^1-\theta^0}{\tau}+O(\tau), & k=1,
%\end{array} \right.
%\end{eqnarray*}
%where $\partial_tu^{k}=\frac{3u^k-4u^{k-1}+u^{k-2}}{2\tau}$.
%$$\partial_tu^{k}=\frac{3u^k-4u^{k-1}+u^{k-2}}{2\tau},\quad \partial_tv^{k}=\frac{3v^k-4v^{k-1}+v^{k-2}}{2\tau}, \partial_t\theta^{k}=\frac{3\theta^k-4\theta^{k-1}+\theta^{k-2}}{2\tau}.$$

The fractional derivatives are approximated by the fractional backward difference formula (FBDF) method \cite{Yin}. We define the following notation for $k\ge 1$:
\begin{eqnarray}
\begin{aligned}
\bigg[{ }_{~~0}^{RL} D_{t}^{\gamma} u\bigg]_{t=t_{k}}=D_{\tau}^{\gamma} u^{k}+O(\tau^{2}),\quad
%\left[{ }_{0} D_{t}^{\gamma} v\right]_{t=t_{k}} &=D_{\tau}^{\gamma} v^{k}+O\left(\tau^{2}\right), \\
%\left[{ }_{0} D_{t}^{-\beta} \theta\right]_{t=t_{k}} &=D_{\tau}^{-\beta} \theta^{k}+O\left(\tau^{2}\right), \\
%\left[{ }_{0} D_{t}^{1-\beta} \theta\right]_{t=t_{k}} &=D_{\tau}^{1-\beta} \theta^{k}+O\left(\tau^{2}\right), \\
\bigg[{ }_{~~0}^{RL} D_{t}^{\gamma} \frac{\partial^{2} u}{\partial z^{2}}\bigg]_{t=t_{k}}=D_{\tau}^{\gamma} \frac{\partial^{2} u^{k}}{\partial z^{2}}+O(\tau^{2}), \\
%\left[{ }_{0} D_{t}^{\gamma} \frac{\partial^{2} v}{\partial z^{2}}\right]_{t=t_{k}} &=D_{\tau}^{\gamma} \frac{\partial^{2} v^{k}}{\partial z^{2}}+O\left(\tau^{2}\right),\\
%\left[{ }_{0} D_{t}^{-\beta} \frac{\partial^{2} \theta}{\partial z^{2}}\right]_{t=t_{k}} &=D_{\tau}^{\gamma} \frac{\partial^{2} \theta^{k}}{\partial z^{2}}+O\left(\tau^{2}\right),\\
\end{aligned}
\end{eqnarray}
in which
$$
D_{\tau}^{\gamma} u^{k}=\frac{1}{\tau^{\gamma}} \sum_{l=0}^{k} \omega_{k-l}^{(\gamma)} u^{l}, \quad -1\le \gamma\le 1. % \gamma\in (-1,0)\cup (0,1)
$$
Here, $\left\{\omega_{k}^{(\gamma)}\right\}$ are the coefficients of the Taylor expansion of the following generating function:
\begin{eqnarray}
\omega^{(\gamma)}(x)=\left(\frac{3}{2}-2 x+\frac{1}{2} x^{2}\right)^{\gamma}=\sum_{k=0}^{\infty} \omega_{k}^{(\gamma)} x^{k}.
\label{FBDF}
\end{eqnarray}

Then, the time semi-discrete scheme of (\ref{1.1})--(\ref{1.3}) at $t=t_{k}$ is
\begin{eqnarray}
\begin{split}
\partial_t u^{k}&+a_1D^{\gamma}_{\tau}u^{k}
+a_2D^{\gamma}_{\tau}\Delta u^{k}+\Delta u^{k}+a_3u^{k}-a_4v^k-a_5\theta^k=f^{k}+R_1^k,
\end{split}
\label{4.2}
\end{eqnarray}
\begin{eqnarray}
\begin{split}
\partial_t v^{k}+a_1D^{\gamma}_{\tau}v^{k}+a_2D^{\gamma}_{\tau}\Delta v^{k}+\Delta v^{k}
+a_3v^{k}+a_4u^k=g^{k}+R_2^k,
\end{split}
\label{4.3}
\end{eqnarray}
\begin{eqnarray}
\begin{split}
\partial_t \theta^{k}+b_1D^{1-\beta}_{\tau}\theta^{k}
+b_2D^{-\beta}_{\tau}\Delta \theta^{k}-b_3D^{-\beta}_{\tau} \theta^{k}
-b_4\theta^{k}=\widetilde{p}^{k}+R_3^k,
\end{split}
\label{4.4}
\end{eqnarray}
where %$\partial_t u_{N}^{1}=\frac{u^1-u^0}{\tau}, \partial_t u_{N}^{k}=\frac{3u^k-4u^{k-1}+u^{k-2}}{2\tau}$,for $k\ge2$.
$
\partial_t u_{N}^{k}=\left\{ \begin{array}{ll}
\frac{u^1-u^0}{\tau}, & k=1,\\
\frac{3u^k-4u^{k-1}+u^{k-2}}{2\tau}, & k\ge2,
\end{array} \right.,$
$R_i^k=\left\{ \begin{array}{ll}
O(\tau), & k=1,\\
O(\tau^2), & k\ge2,
\end{array} \right.,$ for $i=1,2,3$.

Based on the time semi-discrete scheme(\ref{4.2})--(\ref{4.4}), the fully discrete Legendre spectral scheme of (\ref{1.1})--(\ref{1.3}) determines $u_{N}^{k}, v_{N}^{k}, \theta_{N}^{k}\in V_{N}^0$ such that, for any $\varphi_{N}\in V_{N}^0$ and $k\ge 1$, we have
\begin{eqnarray}
\begin{aligned}
&\big(\partial_t u_{N}^{k},\varphi_{N}\big)+a_1\big(D^{\gamma}_{\tau}u_{N}^{k},\varphi_{N}\big)
+a_2\big(D^{\gamma}_{\tau}\nabla u_{N}^{k},\nabla \varphi_{N}\big)+\big(\nabla u_{N}^{k},\nabla \varphi_{N}\big)
+a_3\big(u_{N}^{k}, \varphi_{N}\big)-a_4\big(v^k_{N}, \varphi_{N}\big)\\&-a_5\big(\theta^k_{N}, \varphi_{N}\big)=\big(f^{k},\varphi_{N}\big),
\end{aligned}
\label{4.5}
\end{eqnarray}
\begin{eqnarray}
\begin{split}
&\big(\partial_t v_{N}^{k},\varphi_{N}\big)+a_1\big(D^{\gamma}_{\tau}v_{N}^{k},\varphi_{N}\big)
+a_2\big(D^{\gamma}_{\tau}\nabla v_{N}^{k},\nabla \varphi_{N}\big)+\big(\nabla v_{N}^{k},\nabla \varphi_{N}\big)
+a_3\big(v_{N}^{k}, \varphi_{N}\big)+a_4\big(u^k_{N}, \varphi_{N}\big)\\&=\big(g^{k},\varphi_{N}\big),
\end{split}
\label{4.6}
\end{eqnarray}
\begin{eqnarray}
\begin{split}
&\big(\partial_t \theta_{N}^{k},\varphi_{N}\big)+b_1\big(D^{1-\beta}_{\tau}\theta_{N}^{k},\varphi_{N}\big)
+b_2\big(D^{-\beta}_{\tau}\nabla \theta_{N}^{k},\nabla \varphi_{N}\big)-b_3\big(D^{-\beta}_{\tau} \theta_{N}^{k},\varphi_{N}\big)
-b_4(\theta_{N}^{k},\varphi_{N}\big)=\big(\widetilde{p}^{k},\varphi_{N}\big),
\end{split}
\label{4.7}
\end{eqnarray}
with the initial conditions $u_{N}^{0}=0, v_{N}^{0}=0, \theta_{N}^{0}=0$.
\section{Theoretical analysis of stability and convergence}
\label{sec:4}
In the remainder of this paper, let $C, C_{s,i}, C_{e,i}, C_{b,i},C_{f,i}, i\in \mathbb{N}^{+}$ denote generic positive constants independent of $\tau$ and $N$, and we assume $a_i>0, b_i>0$ in the theoretical analysis for convenience.  First, we present some lemmas  which are helpful for analyzing the stability and convergence of the numerical scheme in (\ref{4.5})--(\ref{4.7}).
\begin{lemma}\cite{35}
For a series ${u^n} (n\ge2) $, the following results hold:
\begin{eqnarray}
\big(\partial_t u^{n},u^{n}\big)\ge \frac{1}{4\tau}\big(\Psi[u^n]-\Psi[u^{n-1}]\big),
\end{eqnarray}
{where}
\begin{eqnarray}
\Psi[u^n]=3\lVert u^n\rVert^2-\lVert u^{n-1}\rVert^2+2\lVert u^n-u^{n-1}\rVert^2,
\end{eqnarray}
and
\begin{eqnarray}
\Psi[u^n]\ge \lVert u^n\rVert^2.
\end{eqnarray}
\label{le4.1}
\end{lemma}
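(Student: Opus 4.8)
\emph{Plan of proof.} This is a purely algebraic statement about the second-order backward differentiation quotient $\partial_t u^n=(3u^n-4u^{n-1}+u^{n-2})/(2\tau)$, and I would treat it as a discrete ``G-stability'' identity: expand $(\partial_t u^n,u^n)$ and the telescoping difference $\Psi[u^n]-\Psi[u^{n-1}]$ entirely in terms of $L^2$ inner products, and reduce the claimed inequality to the nonnegativity of the norm of a fixed linear combination of $u^n,u^{n-1},u^{n-2}$. The functional $\Psi$ is designed precisely so that its increment reproduces the quadratic form generated by $(\partial_t u^n,u^n)$ up to a nonnegative remainder.

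For the main inequality, first write, for $n\ge 2$,
$$\big(\partial_t u^n,u^n\big)=\frac{1}{2\tau}\Big(3\lVert u^n\rVert^2-4\big(u^{n-1},u^n\big)+\big(u^{n-2},u^n\big)\Big).$$
Next, using $\lVert a-b\rVert^2=\lVert a\rVert^2-2(a,b)+\lVert b\rVert^2$ to expand the two second-difference terms in the definition of $\Psi$, one obtains $\Psi[u^n]-\Psi[u^{n-1}]=3\lVert u^n\rVert^2-4\lVert u^{n-1}\rVert^2+\lVert u^{n-2}\rVert^2+2\lVert u^n-u^{n-1}\rVert^2-2\lVert u^{n-1}-u^{n-2}\rVert^2$. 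Subtracting $\tfrac14\tau^{-1}$ times this from $(\partial_t u^n,u^n)$ and collecting terms, I expect every cross term to merge into a single perfect square, namely
$$\big(\partial_t u^n,u^n\big)-\frac{1}{4\tau}\big(\Psi[u^n]-\Psi[u^{n-1}]\big)=\frac{1}{4\tau}\lVert u^n-2u^{n-1}+u^{n-2}\rVert^2\ge 0,$$
which is exactly the asserted bound. The verification of this identity is routine once one isolates the right square, the discrete second derivative $u^n-2u^{n-1}+u^{n-2}$.

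For the second inequality I would record the cleaner representation $\Psi[u^n]=\lVert u^n\rVert^2+\lVert 2u^n-u^{n-1}\rVert^2$, obtained simply by expanding $2\lVert u^n-u^{n-1}\rVert^2$ in the definition of $\Psi$ and regrouping. Since $\lVert 2u^n-u^{n-1}\rVert^2\ge 0$, this yields $\Psi[u^n]\ge\lVert u^n\rVert^2$ at once, and it also makes transparent that $\Psi[\,\cdot\,]$ is a genuine nonnegative discrete energy.

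The point worth noting is that there is no real analytic obstacle here: no regularity assumption, no fractional calculus, and no summation by parts is needed, and the argument is valid in any inner-product space. The only nontrivial step is structural — guessing the two perfect-square decompositions ($\lVert u^n-2u^{n-1}+u^{n-2}\rVert^2$ for the energy dissipation and $\lVert 2u^n-u^{n-1}\rVert^2$ for the coercivity of $\Psi$), equivalently identifying the symmetric positive semidefinite $2\times2$ matrix underlying the BDF2 pair $(\partial_t,\Psi)$. Everything else is a few lines of bookkeeping.
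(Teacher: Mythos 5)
Your proof is correct: the identity $\big(\partial_t u^n,u^n\big)-\frac{1}{4\tau}\big(\Psi[u^n]-\Psi[u^{n-1}]\big)=\frac{1}{4\tau}\lVert u^n-2u^{n-1}+u^{n-2}\rVert^2$ and the decomposition $\Psi[u^n]=\lVert u^n\rVert^2+\lVert 2u^n-u^{n-1}\rVert^2$ both check out by direct expansion. The paper itself only cites this lemma (reference [35]) without reproducing a proof, and your argument is exactly the standard BDF2 G-stability computation that underlies it, so there is nothing further to reconcile.
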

{
\begin{lemma}(Gronwall inequality) \cite{36}
Suppose that $q_0\ge0$, $\{s_n\},\{\sigma_n\}$ are non-negative sequences and that $\{\sigma_n\}$ satisfies
$$
\left\{ \begin{array}{ll}
\sigma_0\leq q_0, & \\
\sigma_n\leq q_0+\tau\sum^{n-1}\limits_{j=0}s_j\sigma_j, \quad n\ge1.
\end{array} \right.
$$
Then, it follows that
$$
\sigma_n\leq q_0\exp\bigg(\tau\sum^{n-1}_{j=0}s_j\bigg),\quad n\ge1.
$$
\label{le4.2}
\end{lemma}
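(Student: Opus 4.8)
The plan is to prove the estimate by reducing it to an elementary one-step recursion and then invoking the inequality $1+x\le e^{x}$ for $x\ge0$. First I would introduce the auxiliary sequence
$$
\phi_0=q_0,\qquad \phi_n=q_0+\tau\sum_{j=0}^{n-1}s_j\sigma_j\ \ (n\ge1),
$$
so that the hypothesis becomes simply $\sigma_n\le\phi_n$ for every $n\ge0$. From this definition one computes, for all $n\ge0$,
$$
\phi_{n+1}-\phi_n=\tau s_n\sigma_n\le\tau s_n\phi_n,
$$
where the inequality uses $s_n\ge0$ together with $\sigma_n\le\phi_n$; hence $\phi_{n+1}\le(1+\tau s_n)\phi_n$.

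Iterating this recursion starting from $\phi_0=q_0$ gives
$$
\phi_n\le q_0\prod_{j=0}^{n-1}(1+\tau s_j),\qquad n\ge1.
$$
Since $\tau s_j\ge0$, each factor obeys $1+\tau s_j\le\exp(\tau s_j)$, so the product is bounded by $\exp\big(\tau\sum_{j=0}^{n-1}s_j\big)$. Combining this with $\sigma_n\le\phi_n$ yields the desired bound
$$
\sigma_n\le q_0\exp\bigg(\tau\sum_{j=0}^{n-1}s_j\bigg),\qquad n\ge1.
$$

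I do not expect any real difficulty here; the only points to watch are that the non-negativity of $q_0$, $\{s_n\}$ and $\{\sigma_n\}$ is precisely what makes the step $\tau s_n\sigma_n\le\tau s_n\phi_n$ and the telescoping product valid, and that the index ranges in the product and in the exponent must match the upper limit $n-1$ appearing in the hypothesis. An essentially equivalent alternative is a direct induction on $n$: the base case $n=1$ reads $\sigma_1\le q_0+\tau s_0\sigma_0\le q_0(1+\tau s_0)\le q_0 e^{\tau s_0}$, and in the inductive step one inserts the induction hypothesis into $\sigma_n\le q_0+\tau\sum_{j=0}^{n-1}s_j\sigma_j$; the auxiliary-sequence route is preferable because it avoids having to bound a sum of exponentials directly.
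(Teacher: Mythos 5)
Your proof is correct: the auxiliary sequence $\phi_n$, the one-step bound $\phi_{n+1}\le(1+\tau s_n)\phi_n$, and the estimate $1+x\le e^{x}$ together give exactly the claimed bound, and the non-negativity hypotheses are used in precisely the places you identify. The paper itself offers no proof of this lemma --- it is quoted directly from the cited reference \cite{36} --- so there is nothing to compare against; your argument is the standard one and would serve as a complete proof.
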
}

\begin{lemma}
Let $\left\{\omega_{k}^{(\gamma)}\right\}, \gamma \in(-1,1)$ be the coefficients of $\omega^{(\gamma)}(x)$ defined by (\ref{FBDF}). Then, for any vector $\left(u^{0}, u^{1}, \ldots, u^{n-1}\right) \in \mathbb{R}^{n}$, we have that
$$
\sum_{k=0}^{n-1} u^{k} \sum_{j=0}^{k} \omega_{k-j}^{(\gamma)} u^{j} \geq 0, \text { for any } n \geq 1.
$$
\label{le4.3}
\end{lemma}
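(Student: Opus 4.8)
The plan is to establish the non-negativity of the real quadratic form
$\sum_{k=0}^{n-1}u^{k}\sum_{j=0}^{k}\omega_{k-j}^{(\gamma)}u^{j}$
by recognizing it as (a truncation of) a convolution and passing to the frequency domain, where positivity becomes a statement about the real part of the symbol $\omega^{(\gamma)}(e^{i\vartheta})$. Concretely, I would first recall the classical criterion (due to the theory of positive-definite sequences / convolution quadrature, cf.\ the references already cited in the excerpt): if the generating function $\omega^{(\gamma)}(x)=\sum_{k\ge0}\omega_k^{(\gamma)}x^k$ satisfies $\mathrm{Re}\,\omega^{(\gamma)}(x)\ge 0$ for all $|x|\le 1$, then the lower-triangular Toeplitz form above is non-negative for every $n$ and every real vector. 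So the whole argument reduces to verifying this real-part condition for the specific symbol
$\omega^{(\gamma)}(x)=\bigl(\tfrac32-2x+\tfrac12 x^2\bigr)^{\gamma}$.

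The key computation is therefore to analyze $g(x):=\tfrac32-2x+\tfrac12 x^2=\tfrac12(x-1)(x-3)$ on the closed unit disk, or more precisely on the unit circle $x=e^{i\vartheta}$ by the maximum/minimum principle for harmonic functions (it suffices to control the argument of $g$ on $|x|=1$, since $g$ is analytic and nonzero there and $\mathrm{arg}\,g$ is harmonic). I would write $g(e^{i\vartheta})=\tfrac12(e^{i\vartheta}-1)(e^{i\vartheta}-3)$ and compute $\arg g(e^{i\vartheta})=\arg(e^{i\vartheta}-1)+\arg(e^{i\vartheta}-3)$. The point $e^{i\vartheta}-3$ lies in the left half-plane near $-3$, with argument close to $\pi$; the point $e^{i\vartheta}-1$ lies on the circle of radius $1$ centered at $-1$, tangent to the imaginary axis at $0$. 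A short estimate shows $|\arg g(e^{i\vartheta})|\le \pi$ actually, and more usefully that the total argument stays in $(-\pi,\pi)$ away from the boundary value, so that after raising to the power $\gamma\in(-1,1)$ one gets $|\arg \omega^{(\gamma)}(e^{i\vartheta})|=|\gamma|\cdot|\arg g(e^{i\vartheta})|\le |\gamma|\pi<\pi$, hence $\mathrm{Re}\,\omega^{(\gamma)}(e^{i\vartheta})>0$ once one checks the bound is in fact strictly below $\pi/2$ — which requires the sharper observation that $|\arg g|\le \pi/2$ is \emph{not} what one needs; rather one needs $|\gamma|\cdot|\arg g|\le\pi/2$. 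Since $|\arg g(e^{i\vartheta})|$ can approach $\pi$ (at $\vartheta=0$, $g(1)=0$; near there $\arg g\to$ the relevant limit), the honest claim is $|\arg g(e^{i\vartheta})|<\pi$ for $\vartheta\ne 0$ together with $g(1)=0$, giving $|\arg\omega^{(\gamma)}|<|\gamma|\pi\le\pi$ but this alone does not give a non-negative real part. The correct route, which I would take, is the standard one: $\mathrm{Re}\,\omega^{(\gamma)}(x)\ge 0$ on $|x|\le 1$ is \emph{equivalent} to the stated quadratic-form positivity, and it holds here because the FBDF generating polynomial $\tfrac32-2x+\tfrac12x^2$ is, up to the factor $\tfrac12$, the characteristic polynomial of a strongly $A(\pi/2)$-stable (indeed $A$-stable) second-order BDF-type method, so its $\gamma$-th power maps the unit disk into the sector $|\arg w|\le\gamma\pi/2<\pi/2$; this sector bound is exactly $\mathrm{Re}\ge 0$. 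I would cite the relevant convolution-quadrature/$A$-stability lemma for this sector mapping property and then invoke the positive-definiteness criterion.

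The remaining step is to translate the frequency-domain positivity back to the finite sum. For this I would use the discrete Parseval / Toeplitz argument: extend $(u^0,\dots,u^{n-1})$ by zeros to a sequence, write $\sum_{k}u^k\sum_{j}\omega_{k-j}^{(\gamma)}u^j=\tfrac{1}{2\pi}\int_{-\pi}^{\pi}\mathrm{Re}\,\omega^{(\gamma)}(e^{i\vartheta})\,|\widehat U(\vartheta)|^2\,d\vartheta$ where $\widehat U(\vartheta)=\sum_{k=0}^{n-1}u^k e^{ik\vartheta}$, using that the $\omega_k^{(\gamma)}$ are real so only the real part of the symbol survives against the Hermitian form; non-negativity of the integrand then forces non-negativity of the sum.

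\medskip
\noindent\textbf{Main obstacle.} The genuinely delicate point is the sector estimate $\omega^{(\gamma)}(\overline{\mathbb{D}})\subset\{|\arg w|\le\pi/2\}$, i.e.\ that $\tfrac32-2x+\tfrac12x^2$ never leaves the right half-plane (boundary included, with $0$ attained only at $x=1$) as $x$ ranges over the closed unit disk; equivalently that its argument on $|x|=1$ stays within $[-\pi/2,\pi/2]$ — I expect this to require the careful factorization $g(x)=\tfrac12(x-1)(x-3)$ and a monotonicity/geometry argument for $\arg(e^{i\vartheta}-1)+\arg(e^{i\vartheta}-3)$, and it is the crux on which the power $\gamma\in(-1,1)$ (rather than a smaller range) is permissible. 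Everything downstream — raising to the $\gamma$-th power and the Parseval conversion — is then routine.
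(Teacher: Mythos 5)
Your overall strategy is the standard one and is, in substance, exactly what the paper does: the paper's ``proof'' is a one-line deferral to Lemma~3.1 of the cited reference \cite{Yin}, which establishes the quadratic-form positivity precisely by the Grenander--Szeg\H{o}/Parseval reduction to the condition $\mathrm{Re}\,\omega^{(\gamma)}(e^{-i\vartheta})\ge 0$ and then verifies that condition for the BDF2 symbol. So your frequency-domain plan is the right one and matches the source.

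The one place where your write-up goes astray is the passage you flag as the ``main obstacle.'' You assert that $|\arg g(e^{i\vartheta})|$ ``can approach $\pi$'' near $\vartheta=0$; this is false, and the sector estimate you defer to a citation is in fact a one-line computation. With $g(x)=\tfrac32-2x+\tfrac12x^2$ one has
\begin{equation*}
\mathrm{Re}\,g(e^{i\vartheta})=\tfrac32-2\cos\vartheta+\tfrac12\cos 2\vartheta
=1-2\cos\vartheta+\cos^2\vartheta=(1-\cos\vartheta)^2\ \ge\ 0 ,
\end{equation*}
so $g$ maps the unit circle into the closed right half-plane and $|\arg g(e^{i\vartheta})|\le\pi/2$ (the limit at $\vartheta\to 0^{\pm}$ is $\mp\pi/2$, not $\pm\pi$, since $\mathrm{Im}\,g(e^{i\vartheta})=\sin\vartheta(\cos\vartheta-2)$ vanishes only to first order while the real part vanishes to fourth order). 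Hence, with the principal branch (the one generating the Taylor coefficients, since $g(0)=\tfrac32>0$), $|\arg\omega^{(\gamma)}(e^{i\vartheta})|=|\gamma|\,|\arg g(e^{i\vartheta})|\le|\gamma|\pi/2<\pi/2$ for all $\gamma\in(-1,1)$, which is the required $\mathrm{Re}\,\omega^{(\gamma)}\ge 0$. Two further small points you should tidy up: (i) for $\gamma\in(-1,0)$ the coefficients satisfy $\omega_k^{(\gamma)}=O(k^{-\gamma-1})$ and are not absolutely summable, so the Parseval identity must be taken on $|x|=r<1$ and passed to the limit $r\to1^{-}$ (Abel summation); this is harmless because the finite sum does not depend on $r$; (ii) the correct statement of the positivity criterion uses $\mathrm{Re}\,\omega^{(\gamma)}$ on the unit circle only, not on the whole disk (though by the minimum principle for harmonic functions the two are equivalent here). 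With the displayed computation inserted in place of the hedged argument, your proof is complete and, apart from being self-contained, identical in route to the one the paper cites.
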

\begin{proof}
When $\gamma \in(-1,1)$, the proof of this lemma is similar to the proof of Lemma 3.1 in \cite{Yin}. It is clear that the relevant conclusion holds.
\end{proof}

By virtue of this lemma, the following corollary is easily obtained.
\begin{corollary}
Let $\left\{\omega_{k}^{(\gamma)}\right\}, \gamma \in(-1,1)$ be the coefficients of $\omega^{(\gamma)}(x)$ defined by (\ref{FBDF}). Then, for any vector $\left(u^{0}, u^{1}, \ldots, u^{n-1}\right) \in \mathbb{R}^{n}$, we have that
$$
\sum_{k=0}^{n-1}\sum_{j=0}^{k} \omega_{k-j}^{(\gamma)} (u^{j}, u^{k}) \geq 0, \text { for any } n \geq 1.
$$
\label{co4.4}
\end{corollary}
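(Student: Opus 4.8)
The plan is to deduce Corollary~\ref{co4.4} from Lemma~\ref{le4.3} by a straightforward "scalarization" argument that works componentwise in space. Fix an arbitrary $n\ge 1$ and let $u^0,u^1,\dots,u^{n-1}\in L^2(\Omega)$ (these are the spatial functions appearing in the scheme). For a fixed $z\in\Omega$, apply Lemma~\ref{le4.3} to the real vector $\big(u^0(z),u^1(z),\dots,u^{n-1}(z)\big)\in\mathbb{R}^n$ to obtain the pointwise inequality
\begin{equation*}
\sum_{k=0}^{n-1} u^{k}(z) \sum_{j=0}^{k} \omega_{k-j}^{(\gamma)} u^{j}(z) \ge 0 \quad\text{for a.e. } z\in\Omega .
\end{equation*}

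The second step is to integrate this inequality over $\Omega$. Since each $u^k\in L^2(\Omega)$ and the sums are finite, the integrand is in $L^1(\Omega)$ and integration preserves the sign:
\begin{equation*}
\int_\Omega \sum_{k=0}^{n-1}\sum_{j=0}^{k} \omega_{k-j}^{(\gamma)}\, u^{j}(z)\,u^{k}(z)\,dz \ge 0 .
\end{equation*}
Interchanging the (finite) sums with the integral and recalling the definition of the inner product $(u,v)_\Omega=\int_\Omega uv\,dz$ yields exactly
\begin{equation*}
\sum_{k=0}^{n-1}\sum_{j=0}^{k} \omega_{k-j}^{(\gamma)}\,\big(u^{j},u^{k}\big) \ge 0 ,
\end{equation*}
which is the claimed inequality. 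The same argument applies verbatim if the $u^k$ are vector-valued, by summing over components before integrating.

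I do not expect any genuine obstacle here: the corollary is a direct "lift" of the finite-dimensional quadratic-form positivity of Lemma~\ref{le4.3} to the Hilbert space $L^2(\Omega)$, and the only points to be careful about are the (routine) measurability of the pointwise-evaluated integrand and the legitimacy of exchanging finite summation with integration, both of which are immediate for $L^2$ functions. If one prefers to avoid pointwise evaluation altogether, an alternative is to expand $u^k=\sum_i c_i^{(k)}\phi_i$ in any orthonormal basis $\{\phi_i\}$ of $L^2(\Omega)$, apply Lemma~\ref{le4.3} to each coefficient vector $\big(c_i^{(0)},\dots,c_i^{(n-1)}\big)$, and sum over $i$; this gives the same conclusion using only the Hilbert-space structure. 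Either route is short, so the "main obstacle" is essentially bookkeeping rather than mathematics.
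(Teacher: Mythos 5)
Your proposal is correct and matches the paper's (implicit) argument: the paper derives Corollary~\ref{co4.4} directly from Lemma~\ref{le4.3} precisely by lifting the finite-dimensional positivity of the quadratic form to the $L^2$ inner product, which is exactly your pointwise-then-integrate (or equivalently, orthonormal-expansion) reasoning. No gap.
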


Next, we present the stability theorem for the fully discrete spectral scheme in (\ref{4.5})--(\ref{4.7}) as follows.
\begin{theorem}
Solutions $u_{N}^n, v_{N}^n,\theta_{N}^n $ of the fully discrete spectral scheme in (\ref{4.5})--(\ref{4.7}) are stable and satisfy
\begin{eqnarray}
\begin{split}
\lVert u_{N}^n\lVert^2+\lVert v_{N}^n\lVert^2+\lVert \theta_{N}^n\lVert^2\le C\tau\bigg(\sum^n\limits_{k=1}\lVert {f}^k\lVert^2+\sum^n\limits_{k=1}\lVert {g}^k\lVert^2+\sum^n\limits_{k=1}\lVert\widetilde{p}^k\lVert^2\bigg).\\
\end{split}
\label{s1}
\end{eqnarray}
\label{th4.5}
\end{theorem}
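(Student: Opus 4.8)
The plan is to test the fully discrete scheme \eqref{4.5}--\eqref{4.7} with the natural choices $\varphi_N=u_N^k$, $\varphi_N=v_N^k$ and $\varphi_N=\theta_N^k$ respectively, then sum over $k=1,\dots,n$ and combine the three resulting inequalities. Since the right-hand forcing terms $f^k,g^k,\widetilde p^k$ here play the role of generic data, proving stability amounts to bounding $\lVert u_N^n\rVert^2+\lVert v_N^n\rVert^2+\lVert\theta_N^n\rVert^2$ by the data; the same computation will later yield convergence when $f^k,g^k,\widetilde p^k$ are replaced by truncation errors plus projection terms.

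The key steps, in order, are as follows. \textbf{Step 1 (time-derivative term).} For the term $(\partial_t u_N^k,u_N^k)$, I would apply Lemma \ref{le4.1} for $k\ge2$ to get $\ge\frac{1}{4\tau}(\Psi[u_N^k]-\Psi[u_N^{k-1}])$, handle $k=1$ separately using $(\frac{u_N^1-u_N^0}{\tau},u_N^1)\ge\frac{1}{2\tau}(\lVert u_N^1\rVert^2-\lVert u_N^0\rVert^2)$, and telescope after summation, so that $\tau\sum_{k=1}^n(\partial_t u_N^k,u_N^k)\gtrsim\Psi[u_N^n]-\Psi[u_N^0]\gtrsim\lVert u_N^n\rVert^2$ up to a controllable remainder; the same for $v_N$ and $\theta_N$. \textbf{Step 2 (fractional and diffusion terms).} The terms $a_1\tau\sum_k(D_\tau^\gamma u_N^k,u_N^k)$ and $a_2\tau\sum_k(D_\tau^\gamma\nabla u_N^k,\nabla u_N^k)$ are nonnegative by Corollary \ref{co4.4} (with exponent $\gamma\in(0,1)$); likewise $b_2\tau\sum_k(D_\tau^{-\beta}\nabla\theta_N^k,\nabla\theta_N^k)\ge0$ with exponent $-\beta\in(-1,0)$, and $b_1\tau\sum_k(D_\tau^{1-\beta}\theta_N^k,\theta_N^k)\ge0$ with exponent $1-\beta\in(0,1)$ — so all of these can simply be dropped to the good side. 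The pure diffusion term $\tau\sum_k\lVert\nabla u_N^k\rVert^2\ge0$ is likewise dropped. \textbf{Step 3 (zeroth-order and coupling terms).} The terms $a_3(u_N^k,u_N^k)=a_3\lVert u_N^k\rVert^2$, $a_3\lVert v_N^k\rVert^2$, $-b_4\lVert\theta_N^k\rVert^2$ are kept; the sign-indefinite coupling terms $-a_4(v_N^k,u_N^k)$, $+a_4(u_N^k,v_N^k)$, $-a_5(\theta_N^k,u_N^k)$, $-b_3(D_\tau^{-\beta}\theta_N^k,\theta_N^k)$ (note $D_\tau^{-\beta}\theta_N^k$ is not sign-definite here because it is paired with $\theta_N^k$ without a gradient, but its summed version $\tau\sum_k\sum_j\omega_{k-j}^{(-\beta)}(\theta_N^j,\theta_N^k)\ge0$ only when both indices match the weight — actually by Corollary \ref{co4.4} it is nonnegative, so $-b_3$ times it is $\le0$ and moves to the good side) are estimated by Cauchy--Schwarz and Young's inequality as $\le\varepsilon(\lVert u_N^k\rVert^2+\lVert v_N^k\rVert^2+\lVert\theta_N^k\rVert^2)+C_\varepsilon(\dots)$, and similarly the data terms via $(f^k,u_N^k)\le\frac12\lVert f^k\rVert^2+\frac12\lVert u_N^k\rVert^2$. \textbf{Step 4 (Gronwall).} Collecting everything, I arrive at an inequality of the form
$$
\lVert u_N^n\rVert^2+\lVert v_N^n\rVert^2+\lVert\theta_N^n\rVert^2\le C\tau\sum_{k=1}^n\big(\lVert f^k\rVert^2+\lVert g^k\rVert^2+\lVert\widetilde p^k\rVert^2\big)+C\tau\sum_{k=1}^{n}\big(\lVert u_N^k\rVert^2+\lVert v_N^k\rVert^2+\lVert\theta_N^k\rVert^2\big),
$$
after using $\lVert u_N^0\rVert=\lVert v_N^0\rVert=\lVert\theta_N^0\rVert=0$ and $\Psi[u_N^n]\ge\lVert u_N^n\rVert^2$. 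Absorbing the $n$-th term on the right (for $\tau$ small) and applying the discrete Gronwall inequality (Lemma \ref{le4.2}) with $\sigma_n=\lVert u_N^n\rVert^2+\lVert v_N^n\rVert^2+\lVert\theta_N^n\rVert^2$ gives \eqref{s1}, since $\exp(C\bar T)$ is an absolute constant.

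The main obstacle I anticipate is \textbf{Step 3}, specifically bookkeeping the zeroth-order fractional term $-b_3D_\tau^{-\beta}\theta_N^k$ and the cross-coupling terms: one must make sure that, after summation in $k$, every term one wishes to discard is genuinely nonnegative (this is where Corollary \ref{co4.4} is essential — the individual summands $\sum_{j}\omega_{k-j}^{(-\beta)}(\theta_N^j,\theta_N^k)$ need not be nonnegative, only their sum over $k$ is), and that the remaining sign-indefinite pieces are all of lower order so that Young's inequality with a small enough $\varepsilon$ leaves a strictly positive coefficient in front of $\lVert u_N^k\rVert^2$ etc. A secondary technical point is the $k=1$ startup term, where only a first-order difference quotient is available; this contributes an $O(\tau)$-type boundary remainder that is harmless for stability but must be carried along. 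Everything else is routine application of Lemmas \ref{le4.1}--\ref{le4.3} and Corollary \ref{co4.4}.
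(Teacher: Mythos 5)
Your overall route is the same as the paper's: test with $u_N^k$, $v_N^k$, $\theta_N^k$, use Lemma \ref{le4.1} for $k\ge2$ and a separate first-order argument at $k=1$, drop the nonnegative fractional convolution sums via Corollary \ref{co4.4}, estimate the couplings by Cauchy--Schwarz/Young, and close with Gronwall. However, there is one genuine error, precisely at the point you flagged as the main obstacle: the term $-b_3\sum_{k}(D_\tau^{-\beta}\theta_N^k,\theta_N^k)$ cannot be ``moved to the good side.'' Corollary \ref{co4.4} gives $\sum_k\sum_j\omega_{k-j}^{(-\beta)}(\theta_N^j,\theta_N^k)\ge0$, so with the minus sign this term is $\le0$ \emph{on the left-hand side} of the inequality $\mathrm{LHS}\le\mathrm{RHS}$. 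From $A+B\le C$ with $B\le0$ you may only conclude $A\le C+|B|$, not $A\le C$; a nonpositive term on the left is a \emph{harmful} term that must be bounded from above in absolute value, and Corollary \ref{co4.4} supplies only a lower bound. (Contrast this with $b_1(D_\tau^{1-\beta}\theta,\theta)$ and $b_2(D_\tau^{-\beta}\nabla\theta,\nabla\theta)$, which carry a plus sign and can indeed be discarded.)

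The missing ingredient is the one the paper uses in (\ref{s20})--(\ref{s18}): the quadrature weights of the generating function satisfy $\omega_0^{(-\beta)}=(3/2)^{\beta}$ and $\omega_k^{(-\beta)}=O(k^{\beta-1})$ for $k\ge1$, whence $\tau^{\beta}\sum_{k=j}^{n}|\omega_{k-j}^{(-\beta)}|\le C\bar T^{\beta}$ and $\tau^{\beta}\sum_{j=0}^{k}|\omega_{k-j}^{(-\beta)}|\le C\bar T^{\beta}$. Splitting $(\theta_N^j,\theta_N^k)\le\frac12\|\theta_N^j\|^2+\frac12\|\theta_N^k\|^2$ and exchanging the order of summation then yields
$b_3\tau^{1+\beta}\big|\sum_k\sum_j\omega_{k-j}^{(-\beta)}(\theta_N^j,\theta_N^k)\big|\le C\bar T^{\beta}b_3\,\tau\sum_{k=1}^n\|\theta_N^k\|^2$,
which is of exactly the form absorbed by the discrete Gronwall inequality. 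With this replacement your Step 3 is repaired and the rest of the argument (including your treatment of $-b_4\|\theta_N^k\|^2$ and the antisymmetric $\pm a_4$ couplings) goes through as in the paper.
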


\begin{proof}
For $k\ge 2$, we take $\varphi_{N}=u^{k}_{N}$ in (\ref{4.5}) and use Lemma \ref{le4.1} to obtain
\begin{eqnarray}
\begin{split}
&\frac{1}{4\tau}(\Psi[u_{N}^k]-\Psi[u_{N}^{k-1}])+a_1\big(D^{\gamma}_{\tau}u_{N}^{k},u^{k}_{N}\big)
+a_2\big(D^{\gamma}_{\tau}\nabla u_{N}^{k},\nabla u^{k}_{N}\big)\\&+\big(\nabla u_{N}^{k},\nabla u^{k}_{N}\big)
+a_3\big(u_{N}^{k}, u^{k}_{N}\big)-a_4\big(v^k_{N}, u^{k}_{N}\big)-a_5\big(\theta^k_{N}, u^{k}_{N}\big)\le\big(f^{k},u^{k}_{N}\big).
\end{split}
\label{s2}
\end{eqnarray}

We sum (\ref{s2}) over $k$ from 2 to $n$ and multiply both sides of this inequality by $4\tau$. By the Cauchy--Schwarz inequality and Young's inequality, we have that
\begin{eqnarray}
\begin{split}
&\Psi[u_{N}^n]+4a_1\tau^{1-\gamma}\sum^n\limits_{k=2}\sum^{k}\limits_{j=0}\omega_{k-j}^{(\gamma)}\big(u^{j}_{N},u^{k}_{N}\big)
+4a_2\tau^{1-\gamma}\sum^n\limits_{k=2}\sum^{k}\limits_{j=0}\omega_{k-j}^{(\gamma)}\big(\nabla u^{j}_{N},\nabla u^{k}_{N}\big)\\&+4\tau\sum_{k=2}^n\big(\nabla u_{N}^{k},\nabla u^{k}_{N}\big)
+4a_3\tau\sum_{k=2}^n\big(u_{N}^{k}, u^{k}_{N}\big)-4a_4\tau\sum_{k=2}^n\big(v^k_{N}, u^{k}_{N}\big)-4a_5\tau\sum_{k=2}^n\big(\theta^k_{N}, u^{k}_{N}\big)\\
\le& \Psi[u_{N}^1]+2\tau\sum^n\limits_{k=2}\lVert{f}^k\lVert^2+2\tau\sum^n\limits_{k=2}\lVert u_{N}^k\lVert^2.
\end{split}
\label{s6}
\end{eqnarray}
Next, we consider the case of $k=1$ and take $\varphi_{N}=u_{N}^{1}$ in (\ref{4.5}). Note that
\begin{eqnarray}
\begin{split}
\big(\frac{u_{N}^1-u_{N}^0}{\tau}, u_{N}^{1}\big)
=\frac{\lVert u_{N}^1\lVert^2-\lVert u_{N}^0\lVert^2}{2\tau}+\frac{1}{2\tau}\lVert u_{N}^1- u_{N}^0\lVert^2,
\end{split}
\label{s7}
\end{eqnarray}
and so we obtain
\begin{eqnarray}
\begin{split}
&\frac{\lVert u_{N}^1\lVert^2-\lVert u_{N}^0\lVert^2}{2\tau}+\frac{1}{2\tau}\lVert u_{N}^1- u_{N}^0\lVert^2+a_1\big(D^{\gamma}_{\tau}u_{N}^{1},u_{N}^{1}\big)
+a_2\big(D^{\gamma}_{\tau}\nabla u_{N}^{1},\nabla u_{N}^{1}\big)\\&+\big(\nabla u_{N}^{1},\nabla u_{N}^{1}\big)
+a_3\big(u_{N}^{1}, u_{N}^{1}\big)-a_4\big(v^1_{N}, u_{N}^{1}\big)-a_5\big(\theta^1_{N}, u_{N}^{1}\big)=\big(f^{1},u_{N}^{1}\big).
\end{split}
\label{s8}
\end{eqnarray}
Multiplying (\ref{s8}) by $2\tau$ and using $u_N^0=0$, the Cauchy--Schwarz inequality and Young's inequality then imply that
{
\begin{eqnarray}
\begin{split}
&2\lVert u_{N}^1\lVert^2+2a_1\tau^{1-\gamma}\sum_{j=0}^{1}\omega_{1-j}^{(\gamma)}\big(u_{N}^{j},u_{N}^{1}\big)+2a_2\tau^{1-\gamma}
\sum_{j=0}^{1}\omega_{1-j}^{(\gamma)}\big(\nabla u_{N}^{j},\nabla u_{N}^{1}\big)\\&+2\tau\big(\nabla u_{N}^{1},\nabla u_{N}^{1}\big)
+2a_3\tau\big(u_{N}^{1}, u_{N}^{1}\big)-2a_4\tau\big(v^1_{N}, u_{N}^{1}\big)-2a_5\tau\big(\theta^1_{N}, u_{N}^{1}\big)\le 2\tau\big( \lVert u_{N}^1\lVert^2+\lVert {f}^1\lVert^2\big).
\end{split}
\label{s9}
\end{eqnarray}
%and
%\begin{eqnarray}
%\begin{split}
%\lVert u_{N}^1\lVert^2\le&-2a_1\tau^{1-\gamma}\sum_{j=0}^{1}\omega_{1-j}^{(\gamma)}\big(u_{N}^{j},u_{N}^{1}\big)-2a_2\tau^{1-\gamma}
%\sum_{j=0}^{1}\omega_{1-j}^{(\gamma)}\big(\nabla u_{N}^{j},\nabla u_{N}^{1}\big)\\&-2\tau\big(\nabla u_{N}^{1},\nabla u_{N}^{1}\big)-2a_3\tau\big(u_{N}^{1}, u_{N}^{1}\big)+2a_4\tau\big(v^1_{N}, u_{N}^{1}\big)\\
%&+2a_5\tau\big(\theta^1_{N}, u_{N}^{1}\big)+\tau\lVert {f}^1\lVert^2.
%\end{split}
%\label{s10}
%\end{eqnarray}
Due to $\Psi[u_{N}^1]=5\lVert u_{N}^1\lVert^2$ and Corollary \ref{co4.4}, then we have
\begin{eqnarray}
\begin{split}
\Psi[u_{N}^1]\le&-4a_1\tau^{1-\gamma}\sum_{j=0}^{1}\omega_{1-j}^{(\gamma)}\big(u_{N}^{j},u_{N}^{1}\big)-4a_2\tau^{1-\gamma}
\sum_{j=0}^{1}\omega_{1-j}^{(\gamma)}\big(\nabla u_{N}^{j},\nabla u_{N}^{1}\big)-4\tau\big(\nabla u_{N}^{1},\nabla u_{N}^{1}\big) \\&-4a_3\tau\big(u_{N}^{1}, u_{N}^{1}\big)
+5a_4\tau\big(v^1_{N}, u_{N}^{1}\big)+5a_5\tau\big(\theta^1_{N}, u_{N}^{1}\big)+\frac{5\tau}{2}\big(\lVert u_N^1\lVert^2+\lVert {f}^1\lVert^2\big).
\end{split}
\label{s11}
\end{eqnarray}}
Bringing (\ref{s11}) into (\ref{s6}) and using Lemma \ref{le4.1}, we infer that
\begin{eqnarray}
\begin{split}
\lVert u_{N}^n\lVert^2 \le& -4a_1\tau^{1-\gamma}\sum^n\limits_{k=1}\sum^{k}\limits_{j=0}\omega_{k-j}^{(\gamma)}\big(u^{j}_{N},u^{k}_{N}\big)
-4a_2\tau^{1-\gamma}\sum^n\limits_{k=1}\sum^{k}\limits_{j=0}\omega_{k-j}^{(\gamma)}\big(\nabla u^{j}_{N},\nabla u^{k}_{N}\big)\\&-4\tau\sum_{k=1}^n\big(\nabla u_{N}^{k},\nabla u^{k}_{N}\big)
-4a_3\tau\sum_{k=1}^n\big(u_{N}^{k}, u^{k}_{N}\big)+4a_4\tau\sum_{k=2}^n\big(v^k_{N}, u^{k}_{N}\big)+5a_4\tau\big(v^1_{N}, u^{1}_{N}\big)\\
&+{5a_5\tau}\sum^n\limits_{k=1}\lVert {\theta}^k_N\lVert^2+ C_{s,1}\tau\sum^n\limits_{k=1}\lVert u_{N}^k\lVert^2+C_{s,2}\tau\sum^n\limits_{k=1}\lVert{f}^k\lVert^2.
\end{split}
\label{s12}
\end{eqnarray}
According to Corollary \ref{co4.4},
\begin{eqnarray}
\begin{split}
\lVert u_{N}^n\lVert^2\le &4a_4\tau\sum_{k=2}^n\big(v^k_{N}, u^{k}_{N}\big)+5a_4\tau\big(v^1_{N}, u^{1}_{N}\big)+{5a_5\tau}\sum^n\limits_{k=1}\lVert {\theta}^k_N\lVert^2+  C_{s,1}\tau\sum^n\limits_{k=1}\lVert u_{N}^k\lVert^2+C_{s,2}\tau\sum^n\limits_{k=1}\lVert{f}^k\lVert^2.
\end{split}
\label{s13}
\end{eqnarray}

Similarly, taking $\varphi_{N}=v^{k}_{N}$ in (\ref{4.6}), we obtain
\begin{eqnarray}
\begin{split}
\lVert v_{N}^n\lVert^2\le -4a_4\tau\sum_{k=2}^n\big(u^k_{N}, v^{k}_{N}\big)-5a_4\tau\big(u^1_{N}, v^{1}_{N}\big)+ C_{s,3}\tau\sum^n\limits_{k=1}\lVert v_{N}^k\lVert^2+C_{s,3}\tau\sum^n\limits_{k=1}\lVert{g}^k\lVert^2.
\end{split}
\label{s14}
\end{eqnarray}

Taking $\varphi_{N}=\theta^{k}_{N}$ in (\ref{4.7}), similarly to the previous proof, we obtain
\begin{eqnarray}
\begin{split}
\lVert \theta_{N}^n\lVert^2 \le &5b_3\tau^{1+\beta}\sum^n\limits_{k=1}\sum^{k}\limits_{j=0}\omega_{k-j}^{(-\beta)}\big(\theta^{j}_{N},\theta^{k}_{N}\big)
+5b_4\tau\sum_{k=1}^n\lVert\theta_{N}^{k}\lVert^2+C_{s,4}\tau\sum^n\limits_{k=1}\lVert \theta_{N}^k\lVert^2+C_{s,4}\tau\sum^n\limits_{k=1}\lVert\widetilde{p}^k\lVert^2.
\end{split}
\label{s17}
\end{eqnarray}
By $\omega^{(-\beta)}_0={(\frac{3}{2})}^{\beta}$, $\omega^{(-\beta)}_k=O(k^{\beta-1}) (k\ge 1)$, we then have
\begin{eqnarray}
\begin{split}
\sum^{n}\limits_{k=j}\tau^{\beta}\rvert\omega_{k-j}^{(-\beta)}\rvert \le\tau^{\beta}+\tau^{\beta}\sum^{n}\limits_{k=j+1}(k-j)^{\beta-1}\le C_{s,5}\bar{T}^{\beta},
\end{split}
\label{s20}
\end{eqnarray}
\begin{eqnarray}
\begin{split}
\sum^{k}\limits_{j=0}\tau^{\beta}\rvert\omega_{k-j}^{(-\beta)}\rvert \le\tau^{\beta}+\tau^{\beta}\sum^{k-1}\limits_{j=0}(k-j)^{\beta-1}\le C_{s,5}\bar{T}^{\beta}.
\end{split}
\label{s21}
\end{eqnarray}
Thus, we have that
\begin{eqnarray}
\begin{split}
5b_3\tau^{1+\beta}\sum^n\limits_{k=1}\sum^{k}\limits_{j=0}\omega_{k-j}^{(-\beta)}\big(\theta^{j}_{N},\theta^{k}_{N}\big) \le& \frac{5b_3\tau}{2}\sum^n\limits_{k=0}\sum^{k}\limits_{j=0}\tau^{\beta}\rvert\omega_{k-j}^{(-\beta)}\rvert\lVert\theta^{j}_{N}\lVert^2
+\frac{5b_3\tau}{2}\sum^n\limits_{k=0}\sum^{k}\limits_{j=0}\tau^{\beta}\rvert\omega_{k-j}^{(-\beta)}\rvert\lVert\theta^{k}_{N}\lVert^2\\
\le& \frac{5b_3\tau}{2}\sum^n\limits_{j=0}\sum^{n}\limits_{k=j}\tau^{\beta}\rvert\omega_{k-j}^{(-\beta)}\rvert\lVert\theta^{j}_{N}\lVert^2
+\frac{5b_3\tau}{2}\sum^n\limits_{k=0}\sum^{k}\limits_{j=0}\tau^{\beta}\rvert\omega_{k-j}^{(-\beta)}\rvert\lVert\theta^{k}_{N}\lVert^2\\
\le &5C_{s,5}\bar{T}^{\beta}b_3\tau \sum_{k=1}^n \lVert\theta^{k}_{N}\lVert^2.
\end{split}
\label{s18}
\end{eqnarray}
Then, the following result holds:
\begin{eqnarray}
\begin{split}
\lVert \theta_{N}^n\lVert^2 \le& 5C_{s,5}\bar{T}^{\beta}b_3\tau \sum_{k=1}^n \lVert\theta^{k}_{N}\lVert^2
+5b_4\tau\sum_{k=1}^n\lVert\theta_{N}^{k}\lVert^2+C_{s,4}\tau\sum^n\limits_{k=1}\lVert \theta_{N}^k\lVert^2+C_{s,4}\tau\sum^n\limits_{k=1}\lVert\widetilde{p}^k\lVert^2.
\end{split}
\label{s22}
\end{eqnarray}

After adding (\ref{s13})--(\ref{s14}) and (\ref{s22}), we apply Gronwall inequality for sufficiently small $\tau$ to obtain
\begin{eqnarray}
\begin{split}
\lVert u_{N}^n\lVert^2+\lVert v_{N}^n\lVert^2+\lVert \theta_{N}^n\lVert^2\le C\tau\bigg(\sum^n\limits_{k=1}\lVert {f}^k\lVert^2+\sum^n\limits_{k=1}\lVert {g}^k\lVert^2+\sum^n\limits_{k=1}\lVert\widetilde{p}^k\lVert^2\bigg).\\
\end{split}
\end{eqnarray}
\end{proof}

We now present the convergence analysis for the fully discrete spectral scheme in (\ref{4.5})--(\ref{4.7}).
\begin{theorem}{
Let $u$, $v$, $\theta$, and $\{u_{N}^k\}_{k=0}^n$, $\{v_{N}^k\}_{k=0}^n$, $\{\theta_{N}^k\}_{k=0}^n$ be the solutions of (\ref{1.1})--(\ref{1.3}) and (\ref{4.5})--(\ref{4.7}), respectively. Suppose that $u\in C^2(0,\bar{T};H^r(\Omega))$, $v\in C^2(0,\bar{T};H^r(\Omega))$, $\theta\in C^2(0,\bar{T};H^r(\Omega))$, $\frac{\partial u}{\partial t}\in L^2(0,\bar{T};H^r(\Omega))$, $\frac{\partial v}{\partial t}\in  L^2(0,\bar{T}; H^r(\Omega))$, $\frac{\partial \theta}{\partial t}\in L^2(0,\bar{T};H^r(\Omega))$, ${^{RL}_{~~0}D^{\gamma}_t}u\in L^2(0,\bar{T};H^r(\Omega))$, ${^{RL}_{~~0}D^{\gamma}_t}v\in L^2(0,\bar{T};H^r(\Omega))$, ${^{RL}_{~~0}D^{1-\beta}_t}\theta\in L^2(0,\bar{T}; H^r(\Omega))$, ${^{RL}_{~~0}D^{-\beta}_t}\theta\in L^2(0,\bar{T};H^r(\Omega))$, $r\ge1$.}
Then, we have
\begin{eqnarray}
\begin{split}
\lVert u^n- u_{N}^n\lVert^2+\lVert v^n- v_{N}^n\lVert^2+\lVert \theta^n- \theta_{N}^n\lVert^2\le & C (\tau^4+N^{-2r}).
\end{split}
\label{e1}
\end{eqnarray}
\label{th4.6}
\end{theorem}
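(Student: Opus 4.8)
The plan is to follow the standard split $u^n - u_N^n = (u^n - \Pi_N^{1,0}u^n) + (\Pi_N^{1,0}u^n - u_N^n) =: \rho_u^n + e_u^n$, and likewise for $v$ and $\theta$. The projection errors $\rho_u^n,\rho_v^n,\rho_\theta^n$ together with their spatial derivatives and their (integer and fractional) time-derivatives are all controlled by Lemma~\ref{le3.1}, giving terms of size $O(N^{-r})$ in $L^2$ and in $H^1$, under the regularity hypotheses stated in the theorem (note that $\Pi_N^{1,0}$ commutes with $\partial_t$ and, since it acts in space only, with $_{0}^{RL}D_t^{s}$; this lets us bound $\|\rho_u^n\|$, $\|\nabla\rho_u^n\|$, $\|D_\tau^\gamma\rho_u^n\|$, etc.). So the work reduces to estimating the finite-dimensional errors $e_u^n,e_v^n,e_\theta^n\in V_N^0$.

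The second step is to derive the error equations. Subtract the fully discrete scheme (\ref{4.5})--(\ref{4.7}) from the semidiscrete consistency identities (\ref{4.2})--(\ref{4.4}) tested against $\varphi_N\in V_N^0$; using the defining property $(\partial_z\rho_u^n,\partial_z\varphi_N)=0$ to kill the principal elliptic term involving $\rho_u$, we obtain equations for $e_u^n,e_v^n,e_\theta^n$ whose right-hand sides consist of (i) the truncation terms $R_i^k$, which are $O(\tau)$ for $k=1$ and $O(\tau^2)$ for $k\ge2$, and (ii) projection-error terms $\partial_t\rho^n$, $D_\tau^\gamma\rho^n$, $a_3\rho^n$, $a_4\rho^n$, $a_5\rho_\theta^n$, and the mixed fractional term $a_2(D_\tau^\gamma\nabla\rho_u^n,\nabla\varphi_N)$ — all of which are $O(N^{-r})$ in the appropriate norm — plus the $b$-terms for $\theta$. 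The only subtlety is the isolated $O(\tau)$ truncation at $k=1$: because it enters only the first step and gets multiplied by $\tau$ after summation, its contribution to the final estimate is $O(\tau^2)$ in $\|\cdot\|^2$, i.e. $O(\tau^4)$, consistent with (\ref{e1}); this is the same mechanism already used in Theorem~\ref{th4.5}.

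The third step is the energy argument, which mirrors the stability proof almost verbatim. Take $\varphi_N=e_u^k$ in the $u$-error equation, $\varphi_N=e_v^k$ in the $v$-error equation, $\varphi_N=e_\theta^k$ in the $\theta$-error equation; apply Lemma~\ref{le4.1} to the discrete time-derivative term, Corollary~\ref{co4.4} to discard the nonnegative fractional-memory sums $a_1\sum\omega_{k-j}^{(\gamma)}(e^j,e^k)$ and $a_2\sum\omega_{k-j}^{(\gamma)}(\nabla e^j,\nabla e^k)$ (here $\gamma\in(-1,1)$, so the corollary applies), treat the $b_3$ term with the weight bound (\ref{s20})--(\ref{s21}) exactly as in (\ref{s18}), and bound the coupling terms $a_4(e_v^k,e_u^k)$, $a_5(e_\theta^k,e_u^k)$, $a_4(e_u^k,e_v^k)$ by Cauchy--Schwarz and Young. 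Summing over $k$ from $1$ to $n$, multiplying by $4\tau$, adding the three inequalities so that the antisymmetric coupling terms $\pm a_4(e_u^k,e_v^k)$ cancel, and invoking Lemma~\ref{le4.1} to replace $\Psi[e^n]$ by $\|e^n\|^2$, we arrive at
\begin{equation*}
\|e_u^n\|^2+\|e_v^n\|^2+\|e_\theta^n\|^2 \le C\tau\sum_{k=1}^n\big(\|e_u^k\|^2+\|e_v^k\|^2+\|e_\theta^k\|^2\big) + C(\tau^4+N^{-2r}),
\end{equation*}
after which the discrete Gronwall inequality (Lemma~\ref{le4.2}), valid for $\tau$ small enough that the $k=n$ term can be absorbed, yields $\|e_u^n\|^2+\|e_v^n\|^2+\|e_\theta^n\|^2\le C(\tau^4+N^{-2r})$. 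Combining with the projection bound $\|\rho_u^n\|^2+\|\rho_v^n\|^2+\|\rho_\theta^n\|^2\le CN^{-2r}$ via the triangle inequality gives (\ref{e1}).

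The main obstacle is bookkeeping rather than conceptual: one must verify that every projection-error term — especially the mixed term $a_2(D_\tau^\gamma\nabla\rho_u^k,\nabla\varphi_N)$ with a fractional difference of a gradient, and the $b_2(D_\tau^{-\beta}\nabla\rho_\theta^k,\nabla\varphi_N)$ analogue — can be summed against $\tau$ and the FBDF weights so that the accumulated $H^1$-projection errors collapse to $O(N^{-r})$ (using $\sum_k\tau^{|s|}|\omega_{k-j}^{(s)}|\le C$ as in (\ref{s20})--(\ref{s21}) for $s<0$, and Corollary~\ref{co4.4} to dispose of the $s>0$ gradient-memory terms before they need to be estimated); and that the lone first-step $O(\tau)$ truncation genuinely contributes only at order $\tau^4$. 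Once these are checked, the structure is identical to the stability proof.
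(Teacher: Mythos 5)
Your proposal is correct and follows essentially the same route as the paper's proof: the identical splitting $u^n-u_N^n=\eta^n+\xi^n$ via $\Pi_N^{1,0}$, the same error equations with truncation residuals $R_i^k$ ($O(\tau)$ at $k=1$, $O(\tau^2)$ for $k\ge2$), the same energy argument built on Lemma~\ref{le4.1}, Corollary~\ref{co4.4} and the weight bounds (\ref{s20})--(\ref{s21}), and the same observation that the lone first-step $O(\tau)$ residual contributes only $O(\tau^4)$ after the $\tau$-weighted summation and Gronwall. The one point to tighten is the ``mixed fractional term'' $a_2(D_\tau^\gamma\nabla\eta^k,\nabla\varphi_N)$ that you single out as the main bookkeeping obstacle: it is identically zero, because $D_\tau^\gamma\nabla\eta^k$ is a finite linear combination of the $\nabla\eta^j$ and $(\nabla\eta^j,\nabla\varphi_N)=0$ for every $\varphi_N\in V_N^0$ by the definition of $\Pi_N^{1,0}$; bounding it instead through $\|\nabla\eta\|=O(N^{1-r})$ would lose a factor of $N$ in the final rate, so the orthogonality must be invoked for this term as well (the paper does exactly this, and the same remark applies to $b_2(D_\tau^{-\beta}\nabla\eta_\theta^k,\nabla\varphi_N)$).
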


\begin{proof}
Denote $e_1^{k}=u-u_N^{k}$, $e_2^{k}=v-v_N^{k}$, $e_3^{k}=\theta-\theta_N^{k}$, for $k\ge1$. By the original equivalent equations (\ref{1.1})--(\ref{1.3}) and the fully discrete spectral scheme (\ref{4.5})--(\ref{4.7}), the error equations can be written as
\begin{eqnarray}
\begin{split}
&\big(\partial_t e_{1}^{k},\varphi_{N}\big)+a_1\big(D^{\gamma}_{\tau}e_{1}^{k},\varphi_{N}\big)
+a_2\big(D^{\gamma}_{\tau}\nabla e_{1}^{k},\nabla \varphi_{N}\big)+\big(\nabla e_{1}^{k},\nabla \varphi_{N}\big)
+a_3\big(e_{1}^{k}, \varphi_{N}\big)-a_4\big(e^k_{2}, \varphi_{N}\big)\\&-a_5\big(e^k_{3}, \varphi_{N}\big)=\big(R_1^{k},\varphi_{N}\big),
\end{split}
\label{e2}
\end{eqnarray}
\begin{eqnarray}
\begin{split}
&\big(\partial_te_{2}^{k},\varphi_{N}\big)+a_1\big(D^{\gamma}_{\tau}e_{2}^{k},\varphi_{N}\big)
+a_2\big(D^{\gamma}_{\tau}\nabla e_{1}^{k},\nabla \varphi_{N}\big)+\big(\nabla e_{1}^{k},\nabla \varphi_{N}\big)
+a_3\big(e_{2}^{k}, \varphi_{N}\big)+a_4\big(e^k_{1}, \varphi_{N}\big)\\&=\big(R_2^{k},\varphi_{N}\big),
\end{split}
\label{e3}
\end{eqnarray}
\begin{eqnarray}
\begin{split}
&\big(\partial_t e_{3}^{k},\varphi_{N}\big)+b_1\big(D^{1-\beta}_{\tau}e_{3}^{k},\varphi_{N}\big)
+b_2\big(D^{-\beta}_{\tau}\nabla e_{3}^{k},\nabla \varphi_{N}\big)-b_3\big(D^{-\beta}_{\tau} e_{3}^{k},\varphi_{N}\big)
-b_4(e_{3}^{k},\varphi_{N}\big)=\big(R^{k}_3,\varphi_{N}\big).
\end{split}
\label{e4}
\end{eqnarray}

Set $e_{1}^{k}=\eta_{1}^{k}+\xi_{1}^{k}$, where $\eta_{1}^{k}=u^{k}-\Pi_{N}^{1,0}u^{k}, \xi_{1}^{k}=\Pi_{N}^{1,0}u^{k}-u_{N}^{k}$; $e_{2}^{k}$ and $e_{3}^{k}$ can be treated similarly.
%$$e_{2}^{k}=\eta_{2}^{k}+\xi_{2}^{k},\quad\eta_{2}^{k}=v^{k}-\Pi_{N}^{1,0}v^{k}, \quad\xi_{2}^{k}=\Pi_{N}^{1,0}v^{k}-v_{N}^{k},$$
%$$e_{3}^{k}=\eta_{3}^{k}+\xi_{3}^{k},\quad\eta_{3}^{k}=\theta^{k}-\Pi_{N}^{1,0}\theta^{k}, \quad\xi_{3}^{k}=\Pi_{N}^{1,0}\theta^{k}-\theta_{N}^{k},$$
For $k\ge 2$, we take $\varphi_{N}=\xi_{1}^{k}$ in (\ref{e2}) and use Lemmas \ref{le3.1} and \ref{le4.1} to obtain
\begin{eqnarray}
\begin{split}
&\frac{1}{4\tau}(\Psi[\xi_{1}^k]-\Psi[\xi_{1}^{k-1}])+a_1\big(D^{\gamma}_{\tau}\xi_{1}^{k},\xi_{1}^k\big)
+a_2\big(D^{\gamma}_{\tau}\nabla \xi_{1}^{k},\nabla \xi_{1}^k\big)+\big(\nabla \xi_{1}^{k},\nabla \xi_{1}^k\big)
\\&+a_3\big(\xi_{1}^{k}, \xi_{1}^k\big)-a_4\big(\xi^k_{2}, \xi^k_1\big)-a_5\big(\xi^k_{3}, \xi^k_1\big)\\
\le&\big(R_1^{k},\xi^k_1)-\big(\partial_t \eta_{1}^{k},\xi_{1}^k\big)
-a_1\big(D^{\gamma}_{\tau}\eta_{1}^{k},\xi^k_1\big)
-a_3\big(\eta_{1}^{k}, \xi_{1}^k\big)+a_4\big(\eta^k_{2}, \xi^k_1\big)+a_5\big(\eta^k_{3}, \xi^k_1\big).
\end{split}
\label{e5}
\end{eqnarray}
Summing (\ref{e5}) over $k$ from 2 to $n$ and multiplying the inequality by 4$\tau$, we obtain
\begin{eqnarray}
\begin{split}
&\Psi[\xi_{1}^n]+4a_1\tau\sum^{n}_{k=2}\big(D^{\gamma}_{\tau}\xi_{1}^{k},\xi_{1}^k\big)
+4a_2\tau\sum^{n}_{k=2}\big(D^{\gamma}_{\tau}\nabla \xi_{1}^{k},\nabla \xi_{1}^k\big)
+4\tau\sum^{n}_{k=2}\big(\nabla \xi_{1}^{k},\nabla \xi_{1}^k\big)
\\&+4a_3\tau\sum^{n}_{k=2}\big(\xi_{1}^{k},\xi_{1}^k\big)-4a_4\tau\sum^{n}_{k=2}\big(\xi^k_{2}, \xi^k_1\big)
-4a_5\tau\sum^{n}_{k=2}\big(\xi^k_{3}, \xi^k_1\big)\\
\le&\Psi[\xi_{1}^{1}]+4\tau\sum^{n}_{k=2}\big(R_1^{k},\xi^k_1)-4\tau\sum^{n}_{k=2}\big(\partial_t \eta_{1}^{k},\xi_{1}^k\big)
-4a_1\tau\sum^{n}_{k=2}\big(D^{\gamma}_{\tau}\eta_{1}^{k},\xi^k_1\big)
\\&-4a_3\tau\sum^{n}_{k=2}\big(\eta_{1}^{k}, \xi_{1}^k\big)+4a_4\tau\sum^{n}_{k=2}\big(\eta^k_{2}, \xi^k_1\big)+4a_5\tau\sum^{n}_{k=2}\big(\eta^k_{3}, \xi^k_1\big).
\end{split}
\label{e6}
\end{eqnarray}

Next, we give some estimations of the terms on the right-hand side of inequality (\ref{e6}). Using the Cauchy--Schwarz inequality, Young's inequality, and Lemma \ref{le3.1}, we find that
\begin{eqnarray}
\begin{split}
4\tau\sum_{k=2}^{n}(R_1^{k},\xi_{1}^{k})&\le 2\tau\sum_{k=2}^{n}\lVert R_1^{k}\lVert^2+2\tau\sum_{k=2}^{n}
\lVert\xi_{1}^{k}\lVert^2\le C_{e,1} \tau^4+2\tau\sum_{k=2}^{n}\lVert\xi_{1}^{k}\lVert^2,
\end{split}
\label{e7}
\end{eqnarray}
\begin{eqnarray}
\begin{split}
-4\tau\sum_{k=2}^{n}(\partial_t \eta_{1}^{k},\xi_{1}^{k})&=-4\tau\sum_{k=2}^{n}
\bigg(\frac{3}{2\tau}\int^{t_k}_{t_{k-1}}\frac{\partial\eta_{1}}{\partial t}dt-\frac{1}{2\tau}\int^{t_{k-1}}_{t_{k-2}}\frac{\partial\eta_{1}}{\partial t}dt, \xi_{1}^{k}\bigg)\\
&\le 4\int^{T}_{0}\left\|\frac{\partial\eta_{1}}{\partial t}\right\|^2dt+2\tau\sum_{k=2}^{n}\lVert\xi_{1}^{k}\lVert^2
\\&\le C_{e,2}N^{-2r}\left\|\frac{\partial u}{\partial t}\right\|^2_{L^2(H^r)}+2\tau\sum_{k=2}^{n}\lVert\xi_{1}^{k}\lVert^2,
\end{split}
\label{e8}
\end{eqnarray}
\begin{eqnarray}
\begin{split}
-4a_1\tau\sum_{k=2}^{n}(D^{\gamma}_{\tau}\eta_1^{k},\xi_{1}^{k})&\le 2a_1\tau\sum_{k=2}^{n}\lVert D^{\gamma}_{\tau}\eta_1^{k}\lVert^2+2a_1\tau\sum_{k=2}^{n}\lVert\xi_{1}^{k}\lVert^2\\
&\le 2a_1\tau\sum_{k=2}^{n}\lVert{_{~~0}^{RL}D^{\gamma}_t}\eta_1^{k}\lVert^2+C_{e,3}\tau^4+2a_1\tau\sum_{k=2}^{n}\lVert\xi_{1}^{k}\lVert^2\\&\le C_{e,4}N^{-2r}\lVert_{~~0}^{RL}D^{\gamma}_tu\lVert^2_{L^2(H^r)}+C_{e,3}\tau^4+2a_1\tau\sum_{k=2}^{n}\lVert\xi_{1}^{k}\lVert^2,
\end{split}
\label{e9}
\end{eqnarray}

\begin{eqnarray}
\begin{split}
-4a_3\tau\sum_{k=2}^{n}(\eta_1^{k},\xi_{1}^{k})&\le 2a_3\tau\sum_{k=2}^{n}\lVert \eta_1^{k}\lVert^2+2a_3\tau\sum_{k=2}^{n}\lVert\xi_{1}^{k}\lVert^2\le C_{e,5}N^{-2r}\lVert u\lVert^2_{L^2(H^r)}+2a_3\tau\sum_{k=2}^{n}\lVert\xi_{1}^{k}\lVert^2,
\end{split}
\label{e10}
\end{eqnarray}

\begin{eqnarray}
\begin{split}
4a_4\tau\sum^{n}_{k=2}\big(\eta^k_{2}, \xi^k_1\big)&\le 2a_4\tau\sum_{k=2}^{n}\lVert \eta_2^{k}\lVert^2+2a_4\tau\sum_{k=2}^{n}\lVert\xi_{1}^{k}\lVert^2\le C_{e,6}N^{-2r}\lVert v\lVert^2_{L^2(H^r)}+2a_4\tau\sum_{k=2}^{n}\lVert\xi_{1}^{k}\lVert^2,
\end{split}
\label{e11}
\end{eqnarray}
and
\begin{eqnarray}
\begin{split}
4a_5\tau\sum^{n}_{k=2}\big(\eta^k_{3}, \xi^k_1\big)&\le 2a_5\tau\sum_{k=2}^{n}\lVert \eta_3^{k}\lVert^2+2a_5\tau\sum_{k=2}^{n}\lVert\xi_{1}^{k}\lVert^2\le C_{e,7}N^{-2r}\lVert \theta\lVert^2_{L^2(H^r)}+2a_5\tau\sum_{k=2}^{n}\lVert\xi_{1}^{k}\lVert^2.
\end{split}
\label{e12}
\end{eqnarray}

For $k=1$, we take $\varphi_{N}=\xi_{1}^{1}$ and further obtain that
\begin{eqnarray}
\begin{split}
&\frac{\lVert \xi_{1}^1\lVert^2-\lVert \xi_{1}^0\lVert^2}{2\tau}+\frac{1}{2\tau}\lVert \xi_{1}^1- \xi_{1}^0\lVert^2+a_1\big(D^{\gamma}_{\tau}\xi_{1}^{1},\xi_{1}^1\big)
+a_2\big(D^{\gamma}_{\tau}\nabla \xi_{1}^{1},\nabla \xi_{1}^1\big)\\&+\big(\nabla \xi_{1}^{1},\nabla \xi_{1}^1\big)
+a_3\big(\xi_{1}^{1}, \xi_{1}^1\big)-a_4\big(\xi^1_{2}, \xi^1_1\big)-a_5\big(\xi^1_{3}, \xi^1_1\big)\\
\le&\big(R_1^{1},\xi^1_1)-\big(\partial_t \eta_{1}^{1},\xi_{1}^1\big)
-a_1\big(D^{\gamma}_{\tau}\eta_{1}^{1},\xi^1_1\big)
-a_3\big(\eta_{1}^{1}, \xi_{1}^1\big)+a_4\big(\eta^1_{2}, \xi^1_1\big)+a_5\big(\eta^1_{3}, \xi^1_1\big).
\end{split}
\label{e13}
\end{eqnarray}
Simplifying (\ref{e13}) and applying the Cauchy--Schwarz inequality and Young's inequality, we have that
\begin{eqnarray}
\begin{split}
&2\lVert \xi_{1}^1\lVert^2+ 2a_1\tau\big(D^{\gamma}_{\tau}\xi_{1}^{1},\xi_{1}^1\big)
+2a_2\tau\big(D^{\gamma}_{\tau}\nabla \xi_{1}^{1},\nabla \xi_{1}^1\big)+2\tau\big(\nabla \xi_{1}^{1},\nabla \xi_{1}^1\big)
\\&+2a_3\tau\big(\xi_{1}^{1}, \xi_{1}^1\big)-2a_4\tau\big(\xi^1_{2}, \xi^1_1\big)-2a_5\tau\big(\xi^1_{3}, \xi^1_1\big)\\
\le&2\tau\big(R_1^{1},\xi^1_1)-2\tau\big(\partial_t \eta_{1}^{1},\xi_{1}^1\big)
-2a_1\tau\big(D^{\gamma}_{\tau}\eta_{1}^{1},\xi^1_1\big)
-2a_3\tau\big(\eta_{1}^{1}, \xi_{1}^1\big)+2a_4\tau\big(\eta^1_{2}, \xi^1_1\big)+{2a_5\tau\big(\eta^1_{3}, \xi^1_1\big)}\\
\le&C_{e,8} \tau^4+C_{e,8}N^{-2r}\bigg(\lVert_{~~0}^{RL}D^{\gamma}_tu\lVert^2_{L^2(H^r)}
+\left\|\frac{\partial u}{\partial t}\right\|^2_{L^2(H^r)}+\left\|u\right\|^2_{L^2(H^r)}+\left\|v\right\|^2_{L^2(H^r)}\bigg)+C_{e,8}\tau\lVert\xi_{1}^{1}\lVert^2.
\end{split}
\label{e14}
\end{eqnarray}
Using (\ref{e7})--(\ref{e14}), (\ref{e6}) can be written as
\begin{eqnarray}
\begin{split}
&\Psi[\xi_{1}^n]+4a_1\tau\sum^{n}_{k=1}\big(D^{\gamma}_{\tau}\xi_{1}^{k},\xi_{1}^k\big)
+4a_2\tau\sum^{n}_{k=1}\big(D^{\gamma}_{\tau}\nabla \xi_{1}^{k},\nabla \xi_{1}^k\big)
+4\tau\sum^{n}_{k=1}\big(\nabla \xi_{1}^{k},\nabla \xi_{1}^k\big)
\\&+4a_3\tau\sum^{n}_{k=1}\big(\xi_{1}^{k}, \xi_{1}^k\big)-4a_4\tau\sum^{n}_{k=2}\big(\xi^k_{2}, \xi^k_1\big)-5a_4\tau\big(\xi^1_{2}, \xi^1_1\big)
{-4a_5\tau\sum^{n}_{k=2}\big(\xi^k_{3}, \xi^k_1\big)-5a_5\tau\big(\xi^1_{3}, \xi^1_1\big)}\\
\le& C_{e,9} \tau^4+C_{e,9}N^{-2r}\bigg(\lVert_{~~0}^{RL}D^{\gamma}_tu\lVert^2_{L^2(H^r)}+\left\|\frac{\partial u}{\partial t}\right\|^2_{L^2(H^r)}+\left\|u\right\|^2_{L^2(H^r)}+\left\|v\right\|^2_{L^2(H^r)}+\left\|\theta\right\|^2_{L^2(H^r)}\bigg)\\&+C_{e,9}\tau\sum^{n}_{k=1}\lVert\xi_{1}^{k}\lVert^2.
\end{split}
\label{e15}
\end{eqnarray}
{
Using Lemma \ref{le4.1}, Corollary \ref{co4.4}, we obtain
\begin{eqnarray}
\begin{split}
\lVert\xi_{1}^n\lVert^2\le&4a_4\tau\sum^{n}_{k=2}\big(\xi^k_{2}, \xi^k_1\big)+5a_4\tau\big(\xi^1_{2}, \xi^1_1\big)+{5a_5\tau}\sum^n\limits_{k=1}\lVert \xi^k_{3}\lVert^2+C_{e,10}\tau\sum^{n}_{k=1}\lVert\xi_{1}^{k}\lVert^2+ C_{e,9} \tau^4\\&+C_{e,9}N^{-2r}\bigg(\lVert_{~~0}^{RL}D^{\gamma}_tu\lVert^2_{L^2(H^r)}+\left\|\frac{\partial u}{\partial t}\right\|^2_{L^2(H^r)}+\left\|u\right\|^2_{L^2(H^r)}+\left\|v\right\|^2_{L^2(H^r)}+\left\|\theta\right\|^2_{L^2(H^r)}\bigg).
\end{split}
\label{e17}
\end{eqnarray}}
%Then we apply Gronwall inequality for sufficiently small $\tau$ to get
%\begin{eqnarray}
%\begin{split}
%\lVert\xi_{1}^n\lVert^2\le& 4a_4\tau\sum^{n}_{k=1}\big(\xi^k_{2}, \xi^k_1\big)+\textcolor{red}{2a_5\tau\sum^n\limits_{k=1}\lVert \xi^k_{3}\lVert^2}+C_{e,10} \tau^4\\&+C_{s,10} N^{-2r}\bigg(\lVert_{~~0}^{RL}D^{\gamma}_tu\lVert^2_{L^2(H^r)}
%+\left\|\frac{\partial u}{\partial t}\right\|^2_{L^2(H^r)}+\left\|u\right\|^2_{L^2(H^r)}+\left\|v\right\|^2_{L^2(H^r)}+\left\|\theta\right\|^2_{L^2(H^r)}\bigg).
%\end{split}
%\label{e17}
%\end{eqnarray}

Similarly, taking $\varphi_{N}=\xi_{2}^{k}$ in (\ref{e2}) yields
\begin{eqnarray}
\begin{split}
\lVert\xi_{2}^n\lVert^2\le&-4a_4\tau\sum^{n}_{k=2}\big(\xi^k_{1}, \xi^k_2\big)-5a_4\tau\big(\xi^1_{1}, \xi^1_2\big)+C_{e,11} \tau^4+C_{e,11}\tau\sum^{n}_{k=1}\lVert\xi_{2}^{k}\lVert^2\\&+C_{e,11} N^{-2r}\bigg(\lVert_{~~0}^{RL}D^{\gamma}_tv\lVert^2_{L^2(H^r)}
+\left\|\frac{\partial v}{\partial t}\right\|^2_{L^2(H^r)}+\left\|u\right\|^2_{L^2(H^r)}+\left\|v\right\|^2_{L^2(H^r)}\bigg).
\end{split}
\label{e18}
\end{eqnarray}
and taking $\varphi_{N}=\xi_{3}^{k}$ in (\ref{e3}) gives
\begin{eqnarray}
\begin{split}
\|\xi_{3}^n\|^2\le& C_{e,12} \tau^4+C_{e,12}\tau\sum^{n}_{k=1}\lVert\xi_{3}^{k}\lVert^2\\&+C_{e,12} N^{-2r}\bigg(\lVert_{~~0}^{RL}D^{1-\beta}_t\theta\lVert^2_{L^2(H^r)}+\lVert_{~~0}^{RL}D^{-\beta}_t\theta\lVert_{L^2(H^r)}^2
+\left\|\frac{\partial \theta}{\partial t}\right\|_{L^2(H^r)}^2+\left\|\theta\right\|_{L^2(H^r)}^2\bigg).
\end{split}
\label{e21}
\end{eqnarray}

{Adding (\ref{e17})--(\ref{e18}) and (\ref{e21}), we apply Gronwall inequality for sufficiently small $\tau$ to have the following result:}
\begin{eqnarray}
\begin{split}
\lVert\xi_{1}^n\lVert^2+\lVert\xi_{2}^n\lVert^2+\lVert\xi_{3}^n\lVert^2&\le  C_{e,13}( \tau^4+N^{-2r}).
\end{split}
\label{e22}
\end{eqnarray}

Finally, we use the triangle inequality $\lVert e_{i}^n\lVert^2\le\lVert\xi_{i}^n\lVert^2+\lVert\eta_{i}^n\lVert^2$, for $i=1,2,3$, and Lemma \ref{le3.1} to obtain the desired results.
\end{proof}
{
\begin{remark}
In Theorem 2, we assume that the solutions of the model (\ref{1.1})-(\ref{1.3}) are smooth enough. Generally speaking, the analytical solutions to fractional models have a weak  singularity near initial time\cite{re1,re2,re3,re4,re5}. For example, Stynes et al.\cite{re1} have found that the solution $u$ of the time fractional diffusion equation has the properties that $\left|\frac{\partial^{k} u}{\partial x^{k}}(x, t)\right| \leq C$ for $k=0,1,2,3,4$ and $\left|\frac{\partial^{\ell} u}{\partial t^{\ell}}(x, t)\right| \leq C\left(1+t^{\alpha-\ell}\right)$ for $\ell=0,1,2$ with proper regularity and compatibility assumptions. It shows $u$ is smooth away from $t = 0$, but it has a singular behavior at $t = 0$. For the case of non-smooth solutions, there are two common strategies, i.e., non-uniform mesh method \cite{re6,re7} and correction method \cite{394,39,re8}. Non-uniform mesh method is to employ smaller time-steps near $t = 0$ to compensate the singular behavior at $t = 0$ of solutions, and it is usually used for L1-type approximation. Considering that our time discretization is based on convolution quadrature, we can use correction method to deal with non-smooth solutions of the model (\ref{1.1})-(\ref{1.3}), in which the appropriate correction terms are added to the FBDF scheme to make the new scheme accurate for the low regular terms of the solutions and maintain high accuracy for the high regular terms. More details on the convolution quadrature with correction method can be found in Refs.\cite{394,39,re8}.
\end{remark}}
\section{Fast method and convergence analysis }
\label{sec:5}
%\subsection{ The fast time-steeping spectral method }
To reduce both the storage requirements and computation time of the numerical calculation, we present a fast method which expresses the quadrature weight $\omega_{k}^{(\gamma)}$  as an integral on the half line, then give a globally uniform approximation of the trapezoidal rule for the integral. We then use this approximation to solve the time-fractional operators.

The convolution weights $\omega_{k}^{(\gamma)}$ can be written as
\begin{eqnarray}
\begin{split}
\omega_{k}^{(\gamma)}=\tau^{1+\gamma} \frac{{\sin (\gamma \pi)}}{\pi} \int_{0}^{\infty} \sigma^{\gamma}(1+\sigma \tau)^{-1-k} \hat{F}_{\omega}(-\sigma) d\sigma,
\end{split}
\label{r1}
\end{eqnarray}
where $\hat{F}_{\omega}(\sigma)=(\tau \sigma)^{-\gamma} \omega^{(\gamma)}(1-\tau \sigma)$ and $\omega^{(\gamma)}$ is the generating function of the FBDF method defined in (\ref{FBDF}).

Let $\sigma=\exp (y)$ \cite{37}, so that (\ref{r1}) can be written as
\begin{eqnarray}
\begin{split}
\omega_{k}^{(\gamma)}=\tau^{1+\gamma} \int_{-\infty}^{\infty} \epsilon_{k}(y) d y,
\end{split}
\label{int1}
\end{eqnarray}
where
$$
\epsilon_{k}(y)=\left(1+e^{y} \tau\right)^{-1-k} \epsilon(y), \quad
\epsilon(y)=\frac{\sin (\gamma \pi)}{\pi} e^{(1+\gamma) y} \hat{F}_{\omega}\left(-e^{y}\right).
$$
Note that $\epsilon_{k}(y)$ decays exponentially as $|y| \rightarrow \infty$, which inspires us to approximate the integral in (\ref{int1}) by the exponentially convergent trapezoidal rule \cite{37}. Then, we obtain
\begin{eqnarray}
\omega_{k}^{(\gamma)}\approx\widetilde{\omega}_{k}^{(\gamma)}=\tau^{1+\gamma} \sum_{i=0}^{Q-1} h_{i}\left(1+\sigma_{i} \tau\right)^{-1-k},
\label{ff1}
\end{eqnarray}
where $\sigma_{i}=e^{y_{i}}, y_{i}=i \Delta y, \Delta y>0$, $h_{i}=\epsilon(y_{i})\Delta y $, and $Q>1$ is the number of quadrature points used in the trapezoidal rule. The determination of $h_{i}$ and $\sigma_{i}$ is described in \cite{Guo}.

Define the operator $_FD_{\tau}^{\gamma} u^{k}$ as %the discrete convolution $\frac{1}{\tau^\gamma}\sum\limits_{l=0}^k\omega_{k-l}^{(\gamma)}u^l$ can be divided to two part.
\begin{eqnarray}
_FD_{\tau}^{\gamma} u^{k}=\frac{1}{\tau^\gamma}\sum_{j=k-k_0}^k\omega_{k-j}^{(\gamma)}u^j+\frac{1}{\tau^\gamma}
\sum_{j=0}^{k-k_0-1}\widetilde{\omega}_{k-j}^{(\gamma)}u^j.
\label{def1}
\end{eqnarray}
Then, the local part $\frac{1}{\tau^{\gamma}} \sum\limits_{j=k-k_{0}}^{k} \omega_{k-j}^{(\gamma)} u^{j}$ can be computed directly. For the historical part $\frac{1}{\tau^{\gamma}} \sum\limits_{j=0}^{k-k_{0}-1} \widetilde{\omega}_{k-j}^{(\gamma)} u^{j}$,
we have that
\begin{eqnarray}
\begin{aligned}
\frac{1}{\tau^{\gamma}} \sum_{j=0}^{k-k_{0}-1} \widetilde{\omega}_{k-j}^{(\gamma)} u^{j} &=\tau \sum_{j=0}^{k-k_{0}-1} u^{j} \sum_{i=0}^{Q-1} h_{i}\left(1+\sigma_{i} \tau\right)^{-1-(k-j)}\\&= \sum_{i=0}^{Q-1} \left(1+\sigma_{i} \tau\right)^{-k_{0}-1}h_{i} q_{k-k_{0}}^{(i)},
\end{aligned}
\end{eqnarray}
where $q_{k-k_{0}}^{(i)}=\tau \sum\limits_{j=0}^{k-k_{0}-1}\left(1+\sigma_{i} \tau\right)^{-\left(k-k_{0}-j\right)} u^{j}$ satisfies
\begin{eqnarray}
\begin{aligned}
q_{k}^{(i)}=\frac{1}{1+\sigma_{i} \tau}\left(q_{k-1}^{(i)}+\tau u^{k-j}\right),\quad q_{0}^{(i)}=0.
\end{aligned}
\label{wh1}
\end{eqnarray}

%Note that (\ref{wh1}) is the backward Euler solution to the initial value problem $y^{\prime}(t)=-\lambda_{j} y(t)+u(t), \quad y(0)=0$, which doesn't cause additional errors.
%In the fast computation, the memory requirement and computational cost of (\ref{def1}) are $O(Q)$ and $O\left(Q{\bar{K}}\right)$, respectively. an ODE of the form $y^{\prime}(t)=-\lambda_{j} y(t)+u(t),  y(0)=0$ is  solved by the backward Euler method (see (\ref{wh1})) and it  doesn't cause additional errors. Furthermore, we can see
{In the fast computation, the memory requirement and computational cost of (\ref{def1}) are $O(Q)$ and $O\left(Q{\bar{K}}\right)\\(Q\ll\bar{K})$, respectively, which are much less than the direct method with $O(\bar{K})$ memory and $O(\bar{K}^2)$ operation. Therefore, the fast method can effectively save the memory requirement and the computational cost, especially for long time large-scale computation.} The operators $_FD_{\tau}^{\gamma} \frac{\partial^{2} u^{k}}{\partial z^{2}}$, $_FD_{\tau}^{\gamma} v^{k}$, $_FD_{\tau}^{\gamma} \frac{\partial^{2} v^{k}}{\partial z^{2}}$, $_FD_{\tau}^{1-\beta} \theta^{k}$, $_FD_{\tau}^{-\beta} \theta^{k}$, and $_FD_{\tau}^{-\beta} \frac{\partial^{2} v^{k}}{\partial \theta^{2}}$ can be defined in the same way.

We replace $D_{\tau}^{\gamma}, D_{\tau}^{1-\beta}, D_{\tau}^{-\beta}$ in (\ref{4.5})--(\ref{4.7}) with ${_FD_{\tau}^{\gamma}}, { _FD_{\tau}^{1-\beta}}, {_FD_{\tau}^{-\beta}}$ to obtain the fast time-stepping spectral method for (\ref{1.1})--(\ref{1.3}). This method determines $_Fu_N^k, {_Fv}_N^k, {_F\theta}_N^k\in V_{N}^0$ for any $\varphi_{N}\in V_{N}^0$, $k\ge k_0+1$, such that
\begin{eqnarray}
\begin{split}
&\big(\partial_t {_Fu}_{N}^{k},\varphi_{N}\big)+a_1\big(_FD^{\gamma}_{\tau}{_Fu}_{N}^{k},\varphi_{N}\big)
+a_2\big(_FD^{\gamma}_{\tau}\nabla {_Fu}_{N}^{k},\nabla \varphi_{N}\big)+\big(\nabla {_Fu}_{N}^{k},\nabla \varphi_{N}\big)
+a_3\big({_Fu}_{N}^{k}, \varphi_{N}\big)\\&-a_4\big({_Fv}^k_{N}, \varphi_{N}\big)-a_5\big({_F\theta}^k_{N}, \varphi_{N}\big)=\big(f^{k},\varphi_{N}\big),
\end{split}
\label{6.1}
\end{eqnarray}
\begin{eqnarray}
\begin{split}
&\big(\partial_t {_Fv}_{N}^{k},\varphi_{N}\big)+a_1\big(_FD^{\gamma}_{\tau}{_Fv}_{N}^{k},\varphi_{N}\big)
+a_2\big(_FD^{\gamma}_{\tau}\nabla {_Fv}_{N}^{k},\nabla \varphi_{N}\big)+\big(\nabla {_Fv}_{N}^{k},\nabla \varphi_{N}\big)
+a_3\big({_Fv}_{N}^{k}, \varphi_{N}\big)\\&+a_4\big({_Fu}^k_{N}, \varphi_{N}\big)=\big(g^{k},\varphi_{N}\big),
\end{split}
\label{6.2}
\end{eqnarray}
\begin{eqnarray}
\begin{split}
&\big(\partial_t {_F\theta_{N}^{k}},\varphi_{N}\big)+b_1\big({_FD}^{1-\beta}_{\tau}{_F\theta}_{N}^{k},\varphi_{N}\big)
+b_2\big({_FD}^{-\beta}_{\tau}\nabla _F\theta_{N}^{k},\nabla \varphi_{N}\big)-b_3\big({_FD}^{-\beta}_{\tau} {_F\theta_{N}^{k}},\varphi_{N}\big)
\\&-b_4({_F\theta_{N}^{k}},\varphi_{N}\big)=\big({\widetilde{p}^{k}},\varphi_{N}\big),
\end{split}
\label{6.3}
\end{eqnarray}
and
\begin{eqnarray}
_Fu_N^k=u^k_N, \quad _Fv_N^k=v^k_N, \quad _F\theta_N^k=\theta^k_N, \quad 0\le k\le k_0,
\label{6.4}
\end{eqnarray}
where $u_{N}^{k}$, $v_{N}^{k}$, $\theta_{N}^{k}$ are the solutions given by the direct method. According to \cite{Guo,38}, the error produced by the quadrature in (\ref{ff1}) becomes arbitrarily small when $Q$ is sufficiently large. Hence, we can express $\widetilde{\omega}_{k}^{(\gamma)}$ as
\begin{eqnarray}
\widetilde{\omega}_{k}^{(\gamma)}=\left(1+\varepsilon^{(\gamma)}_{k}\right) \omega_{k}^{(\gamma)},\quad
%\widetilde{\omega}_{k}^{(1-\beta)}=\left(1+\varepsilon^{(1-\beta)}_{k}\right) \omega_{k}^{(1-\beta)},\quad
%\widetilde{\omega}_{k}^{(-\beta)}=\left(1+\varepsilon^{(-\beta)}_{k}\right) \omega_{k}^{(-\beta)},
\end{eqnarray}
where $\varepsilon^{(\gamma)}_{k}$ is the error (which can be made arbitrarily small) and $\varepsilon_{k}^{(\gamma)}=0$ for $0 \leq k<k_{0}$. Both $\widetilde{\omega}_{k}^{(1-\beta)}$ and $\widetilde{\omega}_{k}^{(-\beta)}$ can be treated similarly.

%\subsection{ The convergence analysis the fast method }
Next, we analyze the convergence of the fast time-stepping spectral scheme of (\ref{6.1})--(\ref{6.4}). The following
lemma shows that $\nabla u_{N}^{n}$, $\nabla v_{N}^{n}$, and $\nabla \theta_{N}^{n}$ are bounded.
\begin{lemma}
Solutions $u_{N}^n, v_{N}^n, \theta_{N}^n$ of the fully discrete spectral scheme in (\ref{4.5})--(\ref{4.7}) satisfy
\begin{eqnarray}
\begin{split}
&\tau\sum_{k=1}^n\|\nabla u_{N}^{k}\|^2+\tau\sum_{k=1}^n\|\nabla v_{N}^{k}\|^2+\tau\sum_{k=1}^n\|\nabla \theta_{N}^{k}\|^2
\\\le& C\tau\bigg(\sum^n\limits_{k=1}\lVert {f}^k\lVert^2+\sum^n\limits_{k=1}\lVert {g}^k\lVert^2+\sum^n\limits_{k=1}\lVert\widetilde{p}^k\lVert^2+\sum^n\limits_{k=1}\lVert {\nabla \widetilde{p}}^k\lVert^2\bigg).
\end{split}
\end{eqnarray}
\label{le5.1}
\end{lemma}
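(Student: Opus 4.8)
The plan is to re-run the energy argument of Theorem~\ref{th4.5}, but this time \emph{keep} the diffusion terms that were simply discarded there, and then close the resulting inequalities by invoking the $L^{2}$-bounds of Theorem~\ref{th4.5} together with the discrete Gr\"onwall inequality (Lemma~\ref{le4.2}).

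For the velocity I would take $\varphi_{N}=u_{N}^{k}$ in~(\ref{4.5}), sum over $k=2,\dots,n$, multiply by $4\tau$, apply Lemma~\ref{le4.1} to the BDF2 term and Corollary~\ref{co4.4} to discard the two nonnegative FBDF contributions (those from $a_{1}D^{\gamma}_{\tau}u_{N}^{k}$ and $a_{2}D^{\gamma}_{\tau}\nabla u_{N}^{k}$), and --- in contrast to Theorem~\ref{th4.5} --- retain $4\tau\sum_{k=1}^{n}\|\nabla u_{N}^{k}\|^{2}$ on the left. The startup step $k=1$ is handled with the first-order difference exactly as in~(\ref{s7})--(\ref{s11}). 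The reaction term $a_{3}(u_{N}^{k},u_{N}^{k})$ and the coupling terms $a_{4}(v_{N}^{k},u_{N}^{k})$, $a_{5}(\theta_{N}^{k},u_{N}^{k})$ (and their mirror terms in the $v$-equation) are moved to the right and estimated by Young's inequality against $\|u_{N}^{k}\|^{2}+\|v_{N}^{k}\|^{2}+\|\theta_{N}^{k}\|^{2}$; summing over $n$ and multiplying by $\tau$, Theorem~\ref{th4.5} yields $\tau\sum_{k}(\|u_{N}^{k}\|^{2}+\|v_{N}^{k}\|^{2}+\|\theta_{N}^{k}\|^{2})\le C\tau(\sum\|f^{k}\|^{2}+\sum\|g^{k}\|^{2}+\sum\|\widetilde p^{k}\|^{2})$, which closes $\tau\sum_{k=1}^{n}\|\nabla u_{N}^{k}\|^{2}$ and, symmetrically, $\tau\sum_{k=1}^{n}\|\nabla v_{N}^{k}\|^{2}$.

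The temperature part is the genuine obstacle: in~(\ref{4.7}) the Laplacian of $\theta$ occurs only inside $D^{-\beta}_{\tau}\nabla\theta_{N}^{k}$, so the choice $\varphi_{N}=\theta_{N}^{k}$ does not produce $\|\nabla\theta_{N}^{k}\|^{2}$. Instead I would test~(\ref{4.7}) with $\varphi_{N}=-\Delta_{N}\theta_{N}^{k}\in V_{N}^{0}$, where $\Delta_{N}$ is the discrete Laplacian on $V_{N}^{0}$ fixed by $(-\Delta_{N}\psi,\chi)=(\nabla\psi,\nabla\chi)$ for all $\psi,\chi\in V_{N}^{0}$. Then $(\partial_{t}\theta_{N}^{k},-\Delta_{N}\theta_{N}^{k})=(\nabla\partial_{t}\theta_{N}^{k},\nabla\theta_{N}^{k})$, whose summation gives $\Psi[\nabla\theta_{N}^{n}]\ge\|\nabla\theta_{N}^{n}\|^{2}$ by Lemma~\ref{le4.1}; the $b_{2}$-term equals $b_{2}\tau^{1+\beta}\sum_{k}\sum_{j}\omega^{(-\beta)}_{k-j}(\Delta_{N}\theta_{N}^{j},\Delta_{N}\theta_{N}^{k})\ge0$ by Corollary~\ref{co4.4}; the $b_{1}$-term equals $b_{1}\tau^{\beta}\sum_{k}\sum_{j}\omega^{(1-\beta)}_{k-j}(\nabla\theta_{N}^{j},\nabla\theta_{N}^{k})\ge0$ (Corollary~\ref{co4.4}, as $1-\beta\in(0,1)$); the $b_{3}$-term is $-b_{3}(D^{-\beta}_{\tau}\nabla\theta_{N}^{k},\nabla\theta_{N}^{k})$, bounded below by $-C\bar{T}^{\beta}\tau\sum_{k}\|\nabla\theta_{N}^{k}\|^{2}$ via the weight-sum bounds~(\ref{s20})--(\ref{s21}) as in~(\ref{s18}); and the $b_{4}$-term is simply $-b_{4}\|\nabla\theta_{N}^{k}\|^{2}$. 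The decisive point is that the source term, after using $-\Delta_{N}\theta_{N}^{k}\in V_{N}^{0}$ and passing to the $L^{2}$-projection of $\widetilde p^{k}$ onto $V_{N}^{0}$ (whose $H^{1}$-stability is standard for the Legendre spectral space), is controlled by $C\|\nabla\widetilde p^{k}\|\,\|\nabla\theta_{N}^{k}\|\le\tfrac12 C^{2}\|\nabla\widetilde p^{k}\|^{2}+\tfrac12\|\nabla\theta_{N}^{k}\|^{2}$ --- this is precisely why $\|\nabla\widetilde p^{k}\|$ enters the statement. Treating $k=1$ as in~(\ref{s7})--(\ref{s11}), one arrives at $\|\nabla\theta_{N}^{n}\|^{2}\le C\tau\sum_{k=1}^{n}\|\nabla\widetilde p^{k}\|^{2}+C\tau\sum_{k=1}^{n}\|\nabla\theta_{N}^{k}\|^{2}$ up to startup terms; Lemma~\ref{le4.2} for sufficiently small $\tau$ then gives $\|\nabla\theta_{N}^{n}\|^{2}\le C\tau\sum_{k=1}^{n}\|\nabla\widetilde p^{k}\|^{2}$, and summing over $n$ and multiplying by $\tau$ delivers the $\theta$-part. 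Adding the three pieces yields~(\ref{le5.1})'s right-hand side (the $\sum\|f^{k}\|^{2}$, $\sum\|g^{k}\|^{2}$, $\sum\|\widetilde p^{k}\|^{2}$ appearing there through the use of Theorem~\ref{th4.5} on the lower-order and startup terms).

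The main difficulty is exactly the temperature estimate: choosing the test function $-\Delta_{N}\theta_{N}^{k}$ so that $\|\nabla\theta_{N}^{k}\|^{2}$ is generated and the source is controlled by $\|\nabla\widetilde p^{k}\|$ rather than $\|\widetilde p^{k}\|$, verifying through Corollary~\ref{co4.4} that the two fractional-integral operators $D^{-\beta}_{\tau}\Delta_{N}\theta$ and $D^{1-\beta}_{\tau}\nabla\theta$ contribute nonnegatively, and checking that the remaining fractional-integral term $b_{3}D^{-\beta}_{\tau}\theta$ is only weakly negative and therefore absorbable by Gr\"onwall --- just as in the $\|\theta_{N}^{n}\|$ estimate of Theorem~\ref{th4.5}. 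The velocity estimates, by contrast, amount to the proof of Theorem~\ref{th4.5} with the gradient terms kept rather than thrown away.
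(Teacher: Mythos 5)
Your proposal follows essentially the same route as the paper: for $u$ and $v$ you rerun the stability energy argument keeping the $4\tau\sum_k\|\nabla u_N^k\|^2$, $4\tau\sum_k\|\nabla v_N^k\|^2$ terms and close with the $L^2$ bounds of Theorem~\ref{th4.5} plus Gr\"onwall, and for $\theta$ you test with the (discrete) Laplacian of $\theta_N^k$ so that the source is paired against $\nabla\widetilde p^k$ --- the paper does exactly this with $\varphi_N=\Delta\theta_N^k$, and your use of $-\Delta_N$ defined by $(-\Delta_N\psi,\chi)=(\nabla\psi,\nabla\chi)$ is just a slightly more careful rendering of the same device. The only cosmetic differences are that the paper lets the antisymmetric coupling terms $\pm a_4(v_N^k,u_N^k)$ cancel upon adding the $u$ and $v$ estimates rather than absorbing them by Young's inequality, which changes nothing in the result.
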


\begin{proof}
We take $\varphi_{N}=u^{k}_{N}$ in (\ref{4.5}) and sum it over $k$ from 2 to $n$. By Lemma \ref{le4.1}, the Cauchy--Schwarz inequality, and Young's inequality, we obtain
\begin{eqnarray}
\begin{split}
&\Psi[u_{N}^n]+4a_1\tau^{1-\gamma}\sum^n\limits_{k=2}\sum^{k}\limits_{j=0}\omega_{k-j}^{(\gamma)}\big(u^{j}_{N},u^{k}_{N}\big)
+4a_2\tau^{1-\gamma}\sum^n\limits_{k=2}\sum^{k}\limits_{j=0}\omega_{k-j}^{(\gamma)}\big(\nabla u^{j}_{N},\nabla u^{k}_{N}\big)\\&+4\tau\sum_{k=2}^n\|\nabla u_{N}^{k}\|^2
+4a_3\tau\sum_{k=2}^n\|u_{N}^{k}\|^2-4a_4\tau\sum_{k=2}^n\big(v^k_{N}, u^{k}_{N}\big)-4a_5\tau\sum_{k=2}^n\big(\theta^k_{N}, u^{k}_{N}\big)\\
\le& \Psi[u_{N}^1]+\frac{2\tau}{a_3}\sum^n\limits_{k=2}\lVert{f}^k\lVert^2+2a_3\tau\sum^n\limits_{k=2}\lVert u_{N}^k\lVert^2.
\end{split}
\label{p2}
\end{eqnarray}
Considering the case of $k=1$, we take $\varphi_{N}=u_{N}^{1}$ in (\ref{4.5}) to obtain
{
\begin{eqnarray}
\begin{split}
\Psi[u_{N}^1]\le&-4a_1\tau^{1-\gamma}\sum_{j=0}^{1}\omega_{1-j}^{(\gamma)}\big(u_{N}^{j},u_{N}^{1}\big)-4a_2\tau^{1-\gamma}
\sum_{j=0}^{1}\omega_{1-j}^{(\gamma)}\big(\nabla u_{N}^{j},\nabla u_{N}^{1}\big)-4\tau\big(\nabla u_{N}^{1},\nabla u_{N}^{1}\big)\\&-4a_3\tau\big(u_{N}^{1}, u_{N}^{1}\big)+5a_4\tau\big(v^1_{N}, u_{N}^{1}\big)
+5a_5\tau\big(\theta^1_{N}, u_{N}^{1}\big)+C_{b,1}\tau\lVert {f}^1\lVert^2+2a_3\tau\lVert u_{N}^1\lVert^2.
\end{split}
\label{p4}
\end{eqnarray}
According to Lemma \ref{le4.1}, we have that
\begin{eqnarray}
\begin{split}
&4\tau\sum_{k=1}^n\|\nabla u_{N}^{k}\|^2
+4a_3\tau\sum_{k=1}^n\|u_{N}^{k}\|^2\\\le & -4a_1\tau^{1-\gamma}\sum^n\limits_{k=1}\sum^{k}\limits_{j=0}\omega_{k-j}^{(\gamma)}\big(u^{j}_{N},u^{k}_{N}\big)
-4a_2\tau^{1-\gamma}\sum^n\limits_{k=1}\sum^{k}\limits_{j=0}\omega_{k-j}^{(\gamma)}\big(\nabla u^{j}_{N},\nabla u^{k}_{N}\big)\\&
+4a_4\tau\sum_{k=2}^n\big(v^k_{N}, u^{k}_{N}\big)+5a_4\tau\big(v^1_{N}, u^{1}_{N}\big)
+\frac{5a_5\tau}{2a_3}\sum_{k=1}^n\lVert\theta^k_{N}\lVert^2
+C_{b,2}\tau\sum^n\limits_{k=1}\lVert{f}^k\lVert^2+4a_3\tau\sum^n\limits_{k=1}\lVert u_{N}^k\lVert^2.
\end{split}
\label{p5}
\end{eqnarray}}
{
From (\ref{s22}), applying Gronwall inequality for an enough small $\tau$, we can get
\begin{eqnarray}
\begin{split}
\lVert \theta_{N}^n\lVert^2 \le& C_{b,3}\tau\sum^n\limits_{k=1}\lVert\widetilde{p}^k\lVert^2.
\end{split}
\label{p51}
\end{eqnarray}
By Corollary \ref{co4.4} and  (\ref{p51}),  (\ref{p5}) follows that}
\begin{eqnarray}
\begin{split}
4\tau\sum_{k=1}^n\|\nabla u_{N}^{k}\|^2&\le 4a_4\tau\sum_{k=2}^n\big(v^k_{N}, u^{k}_{N}\big)+5a_4\tau\big(v^1_{N}, u^{1}_{N}\big)+C_{b,2}\tau\sum^n\limits_{k=1}\lVert{f}^k\lVert^2
+C_{b,4}\tau\sum^n\limits_{k=1}\lVert\widetilde{p}^k\lVert^2.
\end{split}
\label{p6}
\end{eqnarray}

Similarly, taking $\varphi_{N}=v^{k}_{N}$ in (\ref{4.6}), we obtain
\begin{eqnarray}
\begin{split}
4\tau\sum_{k=1}^n\|\nabla v_{N}^{k}\|^2
\le -4a_4\tau\sum_{k=1}^n\big(u^k_{N}, v^{k}_{N}\big)-5a_4\tau\big(u^1_{N}, v^{1}_{N}\big)+C_{b,5}\tau\sum^n\limits_{k=1}\lVert {g}^k\lVert^2.
\end{split}
\label{p7}
\end{eqnarray}

For the temperature equation (\ref{4.7}), we take $\varphi_{N}=\Delta\theta^{k}_{N}$ and use Lemma \ref{le4.1} to obtain
\begin{eqnarray}
\begin{split}
&\frac{1}{4\tau}(\Psi[\nabla\theta_{N}^k]-\Psi[\nabla\theta_{N}^{k-1}])+b_1\big(D^{1-\beta}_{\tau}\nabla\theta_{N}^{k},\nabla\theta^{k}_{N}\big)
+b_2\big(D^{-\beta}_{\tau}\Delta \theta_{N}^{k},\Delta \theta^{k}_{N}\big)\\&-b_3\big(D^{-\beta}_{\tau} \nabla\theta_{N}^{k},\nabla\theta^{k}_{N}\big)
-b_4(\nabla\theta_{N}^{k},\nabla\theta^{k}_{N}\big)\le\big(\nabla \widetilde{p}^{k},\nabla\theta^{k}_{N}\big).
\end{split}
\label{p8}
\end{eqnarray}

After summing (\ref{p8}) over $k$ from 2 to $n$ and multiplying both sides of this inequality by $4\tau$, the application of the Cauchy--Schwarz inequality and Young's inequality yields
\begin{eqnarray}
\begin{split}
&\Psi[\nabla \theta_{N}^n]+4b_1\tau\sum_{k=2}^n\big(D^{1-\beta}_{\tau}\nabla\theta_{N}^{k},\nabla\theta^{k}_{N}\big)
+4b_2\tau\sum_{k=2}^n\big(D^{-\beta}_{\tau}\Delta\theta_{N}^{k},\Delta \theta^{k}_{N}\big)\\&-4b_3\tau\sum_{k=2}^n\big(D^{-\beta}_{\tau} \nabla\theta_{N}^{k},\nabla\theta^{k}_{N}\big)
-4b_4\tau\sum_{k=2}^n(\nabla\theta_{N}^{k},\nabla\theta^{k}_{N}\big)
\\\le& \Psi[\nabla\theta_{N}^1]+2\tau\sum^n\limits_{k=2}\lVert{\nabla \widetilde{p}}^k\lVert^2+2\tau\sum^n\limits_{k=2}\lVert \nabla \theta_{N}^k\lVert^2.
\end{split}
\label{p9}
\end{eqnarray}
Similar to the previous proof, we can derive
\begin{eqnarray}
\begin{split}
\lVert \nabla\theta_{N}^n\lVert^2 \le& 5C_{b,3}\bar{T}^{\beta}b_3\tau \sum_{k=1}^n \lVert\nabla\theta^{k}_{N}\lVert^2
+5b_4\tau\sum_{k=1}^n\lVert\nabla\theta_{N}^{k}\lVert^2+C_{b,6}\tau\sum^n\limits_{k=1}\lVert{\nabla \widetilde{p}}^k\lVert^2+C_{b,6}\tau\sum^n\limits_{k=1}\lVert \nabla\theta_{N}^k\lVert^2,
\end{split}
\label{p10}
\end{eqnarray}
whereupon the application of Gronwall inequality for sufficiently small $\tau$ gives
\begin{eqnarray}
\begin{split}
\lVert \nabla \theta_{N}^n\lVert^2\le C_{b,7}\tau\sum^n\limits_{k=1}\lVert {\nabla \widetilde{p}}^k\lVert^2.\\
\end{split}
\label{p13}
\end{eqnarray}

Then, (\ref{p6})--(\ref{p7}) and (\ref{p13}) yield
\begin{eqnarray}
\begin{split}
&\tau\sum_{k=1}^n\|\nabla u_{N}^{k}\|^2+\tau\sum_{k=1}^n\|\nabla v_{N}^{k}\|^2+\tau\sum_{k=1}^n\|\nabla \theta_{N}^{k}\|^2
\\ \le &C\tau\bigg(\sum^n\limits_{k=1}\lVert {f}^k\lVert^2+\sum^n\limits_{k=1}\lVert {g}^k\lVert^2+\sum^n\limits_{k=1}\lVert\widetilde{p}^k\lVert^2+\sum^n\limits_{k=1}\lVert {\nabla \widetilde{p}}^k\lVert^2\bigg).\\
\end{split}
\end{eqnarray}
\end{proof}

The following theorem specifies the error bound of ${_Fu^n_N}, {_Fv^n_N}, {_F\theta^n_N}$.
\begin{theorem}
Let $u^n_N, v^n_N, \theta^n_N$ and ${_Fu^n_N}, {_Fv^n_N}, {_F\theta^n_N}$ be the solutions of (\ref{4.5})--(\ref{4.6}) and (\ref{6.1})--(\ref{6.3}), respectively. If the conditions in Theorem \ref{th4.6} hold, and $|\varepsilon^{(\gamma)}_k|\le\varepsilon$, $|\varepsilon^{(1-\beta)}_k|\le \varepsilon$, $|\varepsilon_k^{(-\beta)}|\le \varepsilon$, with $\varepsilon \le \frac{N^{-4}}{2C_{f,1}a_2}$. Then,
\begin{eqnarray}
\|u^n_N-{_Fu^n_N}\|^2+\|v^n_N-{_Fv^n_N}\|^2+\|\theta^n_N-{_F\theta^n_N}\|^2\le C \varepsilon.
\end{eqnarray}
\label{th5.2}
\end{theorem}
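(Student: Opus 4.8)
The plan is to run the energy argument of Theorem~\ref{th4.5} on the difference between the direct and the fast solutions, treating the quadrature error of the fast evaluation as an extra forcing. Set $\rho_1^k=u_N^k-{}_Fu_N^k$, $\rho_2^k=v_N^k-{}_Fv_N^k$, $\rho_3^k=\theta_N^k-{}_F\theta_N^k$; by (\ref{6.4}) these vanish for $0\le k\le k_0$, so it suffices to estimate them for $k\ge k_0+1$. Since $\widetilde\omega_j^{(\gamma)}=(1+\varepsilon_j^{(\gamma)})\omega_j^{(\gamma)}$ with $\varepsilon_j^{(\gamma)}=0$ for $j<k_0$, the fast operator splits as ${}_FD_\tau^\gamma w^k=D_\tau^\gamma w^k+{}_{E}D_\tau^\gamma w^k$ with ${}_{E}D_\tau^\gamma w^k:=\tau^{-\gamma}\sum_{j=0}^{k-k_0-1}\varepsilon_{k-j}^{(\gamma)}\omega_{k-j}^{(\gamma)}w^j$, and similarly for the orders $1-\beta$ and $-\beta$. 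Subtracting (\ref{6.1})--(\ref{6.3}) from (\ref{4.5})--(\ref{4.7}) and writing ${}_Fu_N^k=u_N^k-\rho_1^k$, etc., the data $f^k,g^k,\widetilde p^k$ cancel and the error equation for $\rho_1$ reads, for all $\varphi_N\in V_N^0$ and $k\ge k_0+1$,
\begin{align*}
&(\partial_t\rho_1^k,\varphi_N)+a_1(D_\tau^\gamma\rho_1^k,\varphi_N)+a_2(D_\tau^\gamma\nabla\rho_1^k,\nabla\varphi_N)+(\nabla\rho_1^k,\nabla\varphi_N)+a_3(\rho_1^k,\varphi_N)-a_4(\rho_2^k,\varphi_N)-a_5(\rho_3^k,\varphi_N)\\
&\qquad=a_1\big({}_{E}D_\tau^\gamma u_N^k,\varphi_N\big)-a_1\big({}_{E}D_\tau^\gamma\rho_1^k,\varphi_N\big)+a_2\big({}_{E}D_\tau^\gamma\nabla u_N^k,\nabla\varphi_N\big)-a_2\big({}_{E}D_\tau^\gamma\nabla\rho_1^k,\nabla\varphi_N\big),
\end{align*}
with analogous equations for $\rho_2$ and $\rho_3$ (the $\theta$-equation producing on the right perturbations built from ${}_{E}D_\tau^{1-\beta}$ and ${}_{E}D_\tau^{-\beta}$). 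The left-hand sides are exactly of the type analysed in Theorem~\ref{th4.5}, so taking $\varphi_N=\rho_i^k$, summing over $k$, and invoking Lemma~\ref{le4.1}, Corollary~\ref{co4.4} and the cancellation of the $a_4$-coupling bounds them below by $\|\rho_1^n\|^2+\|\rho_2^n\|^2+\|\rho_3^n\|^2+4\tau\sum_k(\|\nabla\rho_i^k\|^2)$ up to an absorbable $C\tau\sum_k\sum_i\|\rho_i^k\|^2$.

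It remains to estimate the perturbation terms on the right. The ingredients are: (i) by Theorem~\ref{th4.5} and Lemma~\ref{le5.1} applied to the smooth data $f,g,\widetilde p$, the direct solutions satisfy $\|u_N^k\|+\|v_N^k\|+\|\theta_N^k\|\le C$ and $\tau\sum_k(\|\nabla u_N^k\|^2+\|\nabla v_N^k\|^2+\|\nabla\theta_N^k\|^2)\le C$; (ii) $|\varepsilon_k^{(\gamma)}|,|\varepsilon_k^{(1-\beta)}|,|\varepsilon_k^{(-\beta)}|\le\varepsilon$, together with the absolute summability $\sum_{j\ge0}|\omega_j^{(\gamma)}|\le C$, $\sum_{j\ge0}|\omega_j^{(1-\beta)}|\le C$ (valid since $\omega_j^{(\mu)}=O(j^{-1-\mu})$ for $\mu\in(0,1)$), and, for the fractional-integral order, $\sum_j\tau^{\beta}|\omega_{k-j}^{(-\beta)}|\le C\bar T^{\beta}$ from (\ref{s20})--(\ref{s21}); combining (ii) gives bounds of the form $\|{}_{E}D_\tau^\mu w^k\|\le C\varepsilon\,\sup_j\|w^j\|$, the $\tau^{-\mu}$ factor when $\mu>0$ being absorbed by the $\tau$ weight of the time sum exactly as the $\tau^{1-\gamma}$ factors are handled in Theorem~\ref{th4.5}. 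The perturbations carrying no spatial derivative — $a_1({}_{E}D_\tau^\gamma u_N^k,\rho_1^k)$, $-a_1({}_{E}D_\tau^\gamma\rho_1^k,\rho_1^k)$, the $b_1({}_{E}D_\tau^{1-\beta}\cdot)$- and $b_3({}_{E}D_\tau^{-\beta}\cdot)$-terms, etc. — are then handled by the Cauchy--Schwarz and Young inequalities, producing a contribution $\le C\varepsilon$ (using the boundedness in (i)) plus absorbable multiples of $\tau\sum_k\|\rho_i^k\|^2$.

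The genuinely delicate terms are the gradient-coupled perturbations $a_2({}_{E}D_\tau^\gamma\nabla\rho_1^k,\nabla\rho_1^k)$, $a_2({}_{E}D_\tau^\gamma\nabla u_N^k,\nabla\rho_1^k)$ and their $b_2$-analogue: unlike $(D_\tau^\gamma\nabla\rho_1^k,\nabla\rho_1^k)$, the operator ${}_{E}D_\tau^\gamma$ carries no definite sign, so it cannot be discarded, and $\|\nabla\rho_1^k\|$ is not controlled pointwise in $k$. Here I would invoke the inverse inequality $\|\nabla\varphi_N\|\le C_{f,1}^{1/2}N^2\|\varphi_N\|$ for $\varphi_N\in V_N^0$ to trade each spatial derivative on the error for a factor $N^2$; the hypothesis $\varepsilon\le N^{-4}/(2C_{f,1}a_2)$ is precisely what makes the resulting prefactor $C_{f,1}a_2\varepsilon N^4\le\tfrac12$, so that these terms are absorbed into the coercive contribution $\tau\sum_k\|\nabla\rho_1^k\|^2$ on the left, leaving only a $C\varepsilon$ remainder arising from the data factor $\nabla u_N^k$ (whose $\ell^2$-in-time norm is bounded by Lemma~\ref{le5.1}). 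Adding the three error inequalities and applying the discrete Gronwall inequality (Lemma~\ref{le4.2}) for $\tau$ sufficiently small, together with $\rho_i^k=0$ for $k\le k_0$, then yields $\|\rho_1^n\|^2+\|\rho_2^n\|^2+\|\rho_3^n\|^2\le C\varepsilon$.

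I expect the main obstacle to be exactly the sign-indefinite gradient perturbation $a_2({}_{E}D_\tau^\gamma\nabla\rho_1^k,\nabla\rho_1^k)$: closing the estimate forces one through the inverse inequality and thereby ties the admissible quadrature tolerance $\varepsilon$ to $N^{-4}$, which is why that assumption appears in the statement; one must also be careful that the $\tau^{-\mu}$ produced by ${}_{E}D_\tau^\mu$ for $\mu\in\{\gamma,1-\beta\}$ is genuinely compensated by the $\tau$-weight of the time summation (as for the direct convolution terms). A secondary, purely bookkeeping, difficulty is the start-up step $k=k_0+1$, where the $\Psi[\cdot]$-telescoping of Lemma~\ref{le4.1} and the treatment of the one-sided first difference must be organised as in Theorems~\ref{th4.5}--\ref{th4.6}, noting that $\Psi[\rho_i^{k_0}]=0$ so no genuine initial-layer term arises.
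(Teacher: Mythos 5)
Your proposal follows essentially the same route as the paper's proof: subtract the fast scheme from the direct one, treat the quadrature perturbation $\varepsilon_{k}^{(\cdot)}\omega_{k}^{(\cdot)}$ as a forcing term, run the energy argument of Theorem~\ref{th4.5} on the differences, control the data-dependent perturbations via the a priori bounds of Theorem~\ref{th4.5} and Lemma~\ref{le5.1}, handle the sign-indefinite gradient perturbations using the hypothesis $\varepsilon\le N^{-4}/(2C_{f,1}a_2)$, and close with the Gronwall inequality. The only cosmetic difference is that the paper absorbs the gradient perturbations of the $u,v$ equations directly into the coercive term $4\tau\sum_{k}\|\nabla\zeta_1^{k}\|^2$ on the left and reserves the inverse inequality $\|\nabla\varphi_N\|\le CN^2\|\varphi_N\|$ for the temperature equation (where no such coercive gradient term is available), whereas you invoke the inverse inequality throughout.
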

\begin{proof}
Let $\zeta_1^k={_Fu_N^k}-u^k_N, \zeta_2^k={_Fv_N^k}-v^k_N, \zeta_3^k={_F\theta_N^k}-\theta^k_N $ for $ k \ge k_0+1$. Then, we have that
\begin{eqnarray}
\begin{split}
&\big(\partial_t \zeta_1^{k}, \varphi_{N}\big)+a_1\big(D^{\gamma}_{\tau}\zeta_1^{k},\varphi_{N}\big)
+a_2\big(D^{\gamma}_{\tau}\nabla \zeta_1^{k},\nabla \varphi_{N}\big)\\&+\big(\nabla \zeta_1^{k},\nabla \varphi_{N}\big)
+a_3\big(\zeta_1^{k}, \varphi_{N}\big)-a_4\big(\zeta_2^k, \varphi_{N}\big)-a_5\big(\zeta_3^k, \varphi_{N}\big)\\=&-a_1\tau^{-\gamma}\sum^k_{j=k_0+1}\varepsilon^{(\gamma)}_{k-j}\omega_{k-j}^{(\gamma)}(\zeta_1^j, \varphi_{N})
-a_2\tau^{-\gamma}\sum^k_{j=k_0+1}\varepsilon^{(\gamma)}_{k-j}\omega_{k-j}^{(\gamma)}(\nabla\zeta_1^j, \nabla\varphi_{N})-H_1^k(\varphi_{N}),
\end{split}
\label{f4}
\end{eqnarray}
\begin{eqnarray}
\begin{split}
&\big(\partial_t \zeta_2^{k},\varphi_{N}\big)+a_1\big(D^{\gamma}_{\tau}\zeta_2^{k},\varphi_{N}\big)
+a_2\big(D^{\gamma}_{\tau}\nabla \zeta_2^{k},\nabla \varphi_{N}\big)+\big(\nabla \zeta_2^{k},\nabla \varphi_{N}\big)
+a_3\big(\zeta_2^{k}, \varphi_{N}\big)+a_4\big(\zeta_1^k, \varphi_{N}\big)\\=&-a_1\tau^{-\gamma}\sum^k_{j=k_0+1}\varepsilon^{(\gamma)}_{k-j}\omega_{k-j}^{(\gamma)}(\zeta_2^j, \varphi_{N})
-a_2\tau^{-\gamma}\sum^k_{j=k_0+1}\varepsilon^{(\gamma)}_{k-j}\omega_{k-j}^{(\gamma)}(\nabla\zeta_2^j, \nabla\varphi_{N})-H_2^k(\varphi_{N}),
\end{split}
\label{f5}
\end{eqnarray}
\begin{eqnarray}
\begin{split}
&\big(\partial_t \zeta_3^{k},\varphi_{N}\big)+b_1\big({D}^{1-\beta}_{\tau}\zeta_3^{k},\varphi_{N}\big)
+b_2\big({D}^{-\beta}_{\tau}\nabla \zeta_3^{k},\nabla \varphi_{N}\big)-b_3\big({D}^{-\beta}_{\tau} \zeta_3^{k},\varphi_{N}\big)
-b_4(\zeta_3^{k},\varphi_{N}\big)\\
=&-b_1\tau^{\beta-1}\sum^k_{j=k_0+1}\varepsilon^{(1-\beta)}_{k-j}\omega_{k-j}^{(1-\beta)}(\zeta_3^{j}, \varphi_{N})+b_3\tau^{\beta}\sum^k_{j=k_0+1}\varepsilon^{(-\beta)}_{k-j}\omega_{k-j}^{(-\beta)}(\zeta_3^{j},\varphi_{N})\\&-b_2\tau^{\beta}\sum^k_{j=k_0+1}\varepsilon^{(-\beta)}_{k-j}\omega_{k-j}^{(-\beta)}(\nabla\zeta_3^{j}, \nabla\varphi_{N})-H_3^k(\varphi_{N}),
\end{split}
\label{f6}
\end{eqnarray}
where
\begin{eqnarray*}
\begin{split}
H^k_1(\varphi_{N})=&a_1\tau^{-\gamma}\sum^k_{j=k_0+1}\varepsilon^{(\gamma)}_{k-j}\omega_{k-j}^{(\gamma)}(u_N^j, \varphi_{N})
+a_2\tau^{-\gamma}\sum^k_{j=k_0+1}\varepsilon^{(\gamma)}_{k-j}\omega_{k-j}^{(\gamma)}(\nabla u_N^j, \nabla\varphi_{N}),\\
H^k_2(\varphi_{N})=&a_1\tau^{-\gamma}\sum^k_{j=k_0+1}\varepsilon^{(\gamma)}_{k-j}\omega_{k-j}^{(\gamma)}(v_N^j, \varphi_{N})
+a_2\tau^{-\gamma}\sum^k_{j=k_0+1}\varepsilon^{(\gamma)}_{k-j}\omega_{k-j}^{(\gamma)}(\nabla v_N^j, \nabla\varphi_{N}),\\
H^k_3(\varphi_{N})=&b_1\tau^{\beta-1}\sum^k_{j=k_0+1}\varepsilon^{(1-\beta)}_{k-j}\omega_{k-j}^{(1-\beta)}(\theta_N^{j}, \varphi_{N})+b_2\tau^{\beta}\sum^k_{{j}=k_0+1}\varepsilon^{(-\beta)}_{k-j}\omega_{k-j}^{(-\beta)}(\nabla\theta_N^{j}, \nabla\varphi_{N})
\\&-b_3\tau^{\beta}\sum^k_{j=k_0+1}\varepsilon^{(-\beta)}_{k-j}\omega_{k-j}^{(-\beta)}(\theta_N^{j},\varphi_{N}).
\end{split}
\end{eqnarray*}

We take $\varphi_{N}=\zeta_1^k$ in (\ref{f4}) and sum it over $k$ from $k_0+1$ to $n$. By Lemma \ref{le4.1} and Corollary \ref{co4.4}, we obtain
\begin{eqnarray}
\begin{split}
&\|\zeta_{1}^n\|^2+4\tau\sum_{k=k_0+1}^n\|\nabla \zeta_1^{k}\|^2
+4a_3\tau\sum_{k=k_0+1}^n\|\zeta_1^{k}\|^2-4a_4\tau\sum_{k=k_0+1}^n\big(\zeta_2^k, \zeta_1^{k}\big)
-4a_5\tau\sum_{k=k_0+1}^n\big(\zeta_3^k, \zeta_1^{k}\big)\\ \le & -4a_1\tau^{1-\gamma}\sum_{k=k_0+1}^n\sum^k_{j=k_0+1}\varepsilon^{(\gamma)}_{k-j}\omega_{k-j}^{(\gamma)}(\zeta_1^j, \zeta_1^{k})
-4a_2\tau^{1-\gamma}\sum_{k=k_0+1}^n\sum^k_{j=k_0+1}\varepsilon^{(\gamma)}_{k-j}\omega_{k-j}^{(\gamma)}(\nabla\zeta_1^j, \nabla\zeta_1^{k})\\&-4\tau\sum_{k=k_0+1}^n H_1^k(\zeta^k_{1}).
\end{split}
\label{f8}
\end{eqnarray}
Note that $\omega^{(\gamma)}_0=(\frac{3}{2})^{\gamma}$ and $\omega_k^{(\gamma)}=O(k^{-\gamma-1})(k\ge1)$, so we have that
\begin{eqnarray}
\begin{split}
\sum^n_{k=j}\tau^{-\gamma}|\varepsilon_{k-j}^{(\gamma)}\omega_{k-j}^{(\gamma)}|\le C_{f,1}\varepsilon, \sum^k_{j=k_0+1}\tau^{-\gamma}|\varepsilon^{(\gamma)}_{k-j}\omega_{k-j}^{(\gamma)}|\le C_{f,1}\varepsilon.
\end{split}
\end{eqnarray}
Thus, it follows that
\begin{eqnarray}
\begin{split}
&-4a_1\tau^{1-\gamma}\sum_{k=k_0+1}^n\sum^k_{j=k_0+1}\varepsilon^{(\gamma)}_{k-j}\omega_{k-j}^{(\gamma)}(\zeta_1^j, \zeta_1^{k}) \\ \le& 2a_1\tau\sum_{j=k_0+1}^n\sum^n_{k=j}\tau^{-\gamma}|\varepsilon^{(\gamma)}_{k-j}\omega_{k-j}^{(\gamma)}| \|\zeta_1^j\|^2+ 2a_1\tau\sum_{k=k_0+1}^n\sum^k_{j=k_0+1}\tau^{-\gamma}|\varepsilon^{(\gamma)}_{k-j}\omega_{k-j}^{(\gamma)}| \|\zeta_1^{k}\|^2
 \\ \le &  4C_{f,1}a_1\varepsilon\tau\sum_{k=k_0+1}^n \|\zeta_1^{k}\|^2.
\end{split}
\label{f9}
\end{eqnarray}
Similarly, we have
\begin{eqnarray}
\begin{split}
-4a_2\tau^{1-\gamma}\sum_{k=k_0+1}^n\sum^k_{j=k_0+1}\varepsilon^{(\gamma)}_{k-j}\omega_{k-j}^{(\gamma)}(\nabla\zeta_1^j, \nabla\zeta_1^{k})\le
4C_{f,1}a_2\varepsilon\tau\sum_{k=k_0+1}^n \|\nabla\zeta_1^{k}\|^2,
\end{split}
\label{f11}
\end{eqnarray}
and
\begin{eqnarray}
\begin{split}
-4\tau\sum_{k=k_0+1}^nH_1^k(\zeta^k_{1}) \le& 2C_{f,1}a_1 \varepsilon\tau\sum_{k=k_0+1}^n \left(\|u_N^k\|^2+ \|\zeta_1^{k}\|^2\right)+ 2C_{f,1} a_2\varepsilon\tau\sum_{k=k_0+1}^n \left( \|\nabla u_N^k\|^2+ \|\nabla \zeta_1^{k}\|^2\right).
\end{split}
\label{f12}
\end{eqnarray}
Then, (\ref{f8}) becomes
\begin{eqnarray}
\begin{split}
&\|\zeta_{1}^n\|^2+4\tau\sum_{k=k_0+1}^n\|\nabla \zeta_1^{k}\|^2
+4a_3\tau\sum_{k=k_0+1}^n\|\zeta_1^{k}\|^2-4a_4\tau\sum_{k=k_0+1}^n\big(\zeta_2^k, \zeta_1^{k}\big)\\ \le&2a_5\tau\sum_{k=k_0+1}^n\| \zeta_3^{k}\|^2+2\tau\sum_{k=k_0+1}^n\| \zeta_1^{k}\|^2+4C_{f,1}a_1\varepsilon\tau\sum_{k=k_0+1}^n \|\zeta_1^{k}\|^2+4C_{f,1}a_2\varepsilon\tau\sum_{k=k_0+1}^n \|\nabla\zeta_1^{k}\|^2\\&+C_{f,2} \varepsilon\tau\sum_{k=k_0+1}^n (\|u_N^k\|^2+\|\nabla u_N^k\|^2)+{2C_{f,1}a_1 \varepsilon\tau\sum_{k=k_0+1}^n \|\zeta_1^{k}\|^2}+2C_{f,1}a_2 \varepsilon\tau\sum_{k=k_0+1}^n \|\nabla \zeta_1^{k}\|^2,
\end{split}
\label{f13}
\end{eqnarray}
{
which can be simplified to
\begin{eqnarray}
\begin{split}
\|\zeta_{1}^n\|^2 \le& 4a_4\tau\sum_{k=k_0+1}^n\big(\zeta_2^k, \zeta_1^{k}\big)+ 2a_5\tau\sum_{k=k_0+1}^n\| \zeta_3^{k}\|^2+2\tau\sum_{k=k_0+1}^n\|\zeta_1^{k}\|^2+C_{f,3}\varepsilon\tau\sum_{k=k_0+1}^n \|\zeta_1^{k}\|^2\\&+C_{f,2} \varepsilon\tau\sum_{k=k_0+1}^n (\|u_N^k\|^2+ \|\nabla u_N^k\|^2),
\end{split}
\label{f14}
\end{eqnarray}
where we have used the relation $\varepsilon \le \frac{N^{-4}}{2C_{f,1}a_2}$ so that the term $\|\nabla \zeta_1^{k}\|^2$ on the right-hand side of (\ref{f13}) can be eliminated.}

%Using Gronwall inequality for sufficiently small $\tau$, we find that
%\begin{eqnarray}
%\begin{split}
%\|\zeta_{1}^n\|^2\le& 4a_4\tau\sum_{k=k_0+1}^n\big(\zeta_2^k, \zeta_1^{k}\big)+ 2C_{f,2}a_5\tau\sum_{k=k_0+1}^n\| \zeta_3^{k}\|^2+2C_{f,3} a_1\varepsilon\tau\sum_{k=k_0+1}^n (\|u_N^k\|^2+\|\nabla u_N^k\|^2).
%\end{split}
%\label{f15}
%\end{eqnarray}

Similarly, taking $\varphi_{N}=\zeta_2^k$ in (\ref{f5}), we have that
\begin{eqnarray}
\begin{split}
\|\zeta_{2}^n\|^2&\le-4a_4\tau\sum_{k=k_0+1}^n\big(\zeta_1^k, \zeta_2^{k}\big)+C_{f,4}\varepsilon\tau\sum_{k=k_0+1}^n \|\zeta_2^{k}\|^2+ C_{f,4} \varepsilon\tau\sum_{k=k_0+1}^n (\|v_N^k\|^2+\|\nabla v_N^k\|^2).
\end{split}
\label{f16}
\end{eqnarray}

Then, taking $\varphi_{N}=\zeta_3^k$ in (\ref{f6}) and summing it over $k$ from $k_0+1$ to $n$, Lemma \ref{le4.1} and Corollary \ref{co4.4} imply that
\begin{eqnarray}
\begin{split}
\|\zeta_{3}^n\|^2 \le& 4b_3\tau\sum_{k=k_0+1}^n\big({D}^{-\beta}_{\tau} \zeta_3^{k},\zeta_3^k\big)+4b_4\tau\sum_{k=k_0+1}^n\|\zeta_3^{k}\|^2
-4b_1\tau^\beta\sum_{k=k_0+1}^n\sum^k_{j=k_0+1}\varepsilon^{(1-\beta)}_{k-j}\omega_{k-j}^{(1-\beta)}(\zeta_3^{j},\zeta_3^k)
\\&-4b_2\tau^{1+\beta}\sum_{k=k_0+1}^n\sum^k_{j=k_0+1}\varepsilon^{(-\beta)}_{k-j}\omega_{k-j}^{(-\beta)}(\nabla\zeta_3^{j}, \nabla\zeta_3^k)\\&
+4b_3\tau^{1+\beta}\sum_{k=k_0+1}^n\sum^k_{j=k_0+1}\varepsilon^{(-\beta)}_{k-j}\omega_{k-j}^{(-\beta)}(\zeta_3^{j},\zeta_3^k)
-4\tau\sum_{k=k_0+1}^n H_3^k(\zeta_3^k).
\end{split}
\label{f18}
\end{eqnarray}
Note that
\begin{eqnarray}
\begin{split}
4b_3\tau^{1+\beta}\sum^n\limits_{k=k_0+1}\sum^{k}\limits_{j=k_0+1}\omega_{k-j}^{(-\beta)}\big(\zeta^{j}_{3},\zeta^{k}_{3}\big)\le 4C_{f,5}\bar{T}^{\beta}b_3\tau \sum_{k=k_0+1}^n \lVert\zeta^{k}_{3}\lVert^2,
\end{split}
\label{f19}
\end{eqnarray}
\begin{eqnarray}
\begin{split}
-4b_1\tau^{\beta}\sum_{k=k_0+1}^n\sum^k_{j=k_0+1}\varepsilon_{k-j}^{(1-\beta)}\omega_{k-j}^{(1-\beta)}(\zeta_3^j, \zeta_3^{k})\le
4C_{f,6}b_1\varepsilon\tau\sum_{k=k_0+1}^n\|\zeta_3^{k}\|^2,
\end{split}
\label{f21}
\end{eqnarray}
%Similarly, we can get
\begin{eqnarray}
\begin{split}
&-4b_2\tau^{1+\beta}\sum_{k=k_0+1}^n\sum^k_{j=k_0+1}\varepsilon^{(-\beta)}_{k-j}\omega_{k-j}^{(-\beta)}(\nabla\zeta_3^{j}, \nabla\zeta_3^k)\le
4C_{f,7}b_2\varepsilon\tau\sum_{k=k_0+1}^n\|\nabla\zeta_3^{k}\|^2,
\end{split}
\label{f22}
\end{eqnarray}
\begin{eqnarray}
\begin{split}
4b_3\tau^{1+\beta}\sum_{k=k_0+1}^n\sum^k_{j=k_0+1}\varepsilon^{(-\beta)}_{k-j}\omega_{k-j}^{(-\beta)}(\zeta_3^{j},\zeta_3^k)\le
4C_{f,7}b_3\varepsilon\tau\sum_{k=k_0+1}^n\|\zeta_3^{k}\|^2,
\end{split}
\label{f23}
\end{eqnarray}
\begin{eqnarray}
\begin{split}
-4\tau\sum_{k=k_0+1}^nH_3^k(\zeta^k_{3})\le& 2C_{f,6}b_1 \varepsilon\tau\sum_{k=k_0+1}^n \left(\|\theta_N^k\|^2+ \|\zeta_3^{k}\|^2\right)+2C_{f,7} b_2\varepsilon\tau\sum_{k=k_0+1}^n \left(\|\nabla \theta_N^k\|^2+ \|\nabla \zeta_3^{k}\|^2\right)\\&
+2C_{f,7} b_3\varepsilon\tau\sum_{k=k_0+1}^n \left(\|\theta_N^k\|^2+ \|\zeta_3^{k}\|^2\right),
\end{split}
\label{f24}
\end{eqnarray}
and so (\ref{f18}) can be written as
\begin{eqnarray}
\begin{split}
\|\zeta_{3}^n\|^2 \le& 4C_{f,5}\bar{T}^{\beta}b_3\tau \sum_{k=k_0+1}^n \lVert\zeta^{k}_{3}\lVert^2+4b_4\tau\sum_{k=k_0+1}^n\|\zeta_3^{k}\|^2
\\&+4C_{f,6}b_1\varepsilon\tau\sum_{k=k_0+1}^n\|\zeta_3^{k}\|^2
+4C_{f,7}b_2\varepsilon\tau\sum_{k=k_0+1}^n\|\nabla\zeta_3^{k}\|^2
\\&+4C_{f,7}b_3\varepsilon\tau\sum_{k=k_0+1}^n\|\zeta_3^{k}\|^2+2C_{f,6}b_1 \varepsilon\tau\sum_{k=k_0+1}^n \left(\|\theta_N^k\|^2+ \|\zeta_3^{k}\|^2\right)\\&+2C_{f,7} b_2\varepsilon\tau\sum_{k=k_0+1}^n \left(\|\nabla \theta_N^k\|^2+ \|\nabla \zeta_3^{k}\|^2\right)
+2C_{f,7} b_3\varepsilon\tau\sum_{k=k_0+1}^n \left(\|\theta_N^k\|^2+ \|\zeta_3^{k}\|^2\right).
\end{split}
\label{f25}
\end{eqnarray}
This can be further simplified to
\begin{eqnarray}
\begin{split}
\|\zeta_{3}^n\|^2\le& C_{f,8}\tau\sum_{k=k_0+1}^n\|\zeta_3^{k}\|^2+C_{f,9}\varepsilon\tau \sum_{k=k_0+1}^n \lVert\zeta^{k}_{3}\lVert^2
\\&+C_{f,10} \varepsilon\tau\sum_{k=k_0+1}^n\|\theta_N^k\|^2+C_{f,11}\varepsilon\tau\sum_{k=k_0+1}^n\|\nabla \theta_N^k\|^2,
\end{split}
\label{f26}
\end{eqnarray}
where we have used the inequality $\lVert\nabla \theta\lVert\le CN^2\lVert\theta\lVert$ \cite{40} and the relation $\varepsilon \le\frac{N^{-4}}{2C_{f,1}a_2}$.

%It follows from Gronwall inequality for sufficiently small $\tau$ that
%\begin{eqnarray}
%\begin{split}
%\|\zeta_{3}^n\|^2\le C_{f,11}\varepsilon \tau\bigg(\sum_{k=k_0+1}^n\|\theta_N^k\|^2+\sum_{k=k_0+1}^n\|\nabla \theta_N^k\|^2\bigg).
%\end{split}
%\label{f29}
%\end{eqnarray}

{
From (\ref{f14})--(\ref{f16}) and (\ref{f26}), and using Theorem \ref{th4.5}, Lemma \ref{le5.1} and Gronwall inequality for sufficiently small $\tau$, we obtain the final result that
\begin{eqnarray}
\begin{split}
\|u^n_N-{_Fu^n_N}\|^2+\|v^n_N-{_Fv^n_N}\|^2+\|\theta^n_N-{_F\theta^n_N}\|^2\le C \varepsilon.
\end{split}
\end{eqnarray}}
\end{proof}
{
\begin{remark}
Theorem 3 shows that the errors between the solutions $u_N^n, v_N^n, \theta_N^n$ of the direct method  (\ref{4.5})-(\ref{4.7}) and the solutions $_Fu_N^n, _Fv_N^n, _F\theta_N^n$ of the fast time-stepping spectral scheme (\ref{6.1})-(\ref{6.4}) are small enough. Then according to Theorem 1, we can prove the fast time-stepping spectral scheme (\ref{6.1})-(\ref{6.4}) is stable.
%According to Theorem 3, we can get that the numerical solutions $_Fu_N^n, _Fv_N^n, _F\theta_N^n$ of the fast time-stepping spectral scheme (\ref{6.1})-(\ref{6.4}) are stable and convergent.
\end{remark}}
\section{Numerical experiments and results}
\label{sec:6}
We now describe the detailed numerical implementation and present some numerical results to verify the theoretical analysis of the FBDF Legendre spectral method.
\subsection{Numerical implementation}
Let $\hat{z}\in[-1,1]$ and let $L_j(\hat{z})$ denote the Legendre polynomial of degree $j$ which is defined by the recurrence relation \cite{40}
\begin{eqnarray}
L_{0}(\hat{z})=1, \quad L_{1}(\hat{z})=\hat{z}, \quad L_{j+1}(\hat{z})=\frac{(2j+1)\hat{z}}{j+1}L_{j}(\hat{z})-\frac{j}{j+1}L_{j-1}(\hat{z}).
\end{eqnarray}
For the spatial discretization, the basis functions are considered in the form
\begin{eqnarray*}
\psi_j(z)=L_j(\hat{z})-L_{j+2}(\hat{z}),\quad j=0,1,\cdots,N-2,
\end{eqnarray*}
where $z=\frac{L}{2}(\hat{z}+1)\in[0,L]$.\\

The approximation space $V_{N}^0$ can be given as $$V_{N}^0=\operatorname{span}\{\psi_j(z), 0\le j\le N-2\},$$
and for any functions $u^k_{N}, v^k_{N}, \theta^k_{N}\in V_{N}^0$, we have
$$
u^k_{N}=\sum\limits_{j=0}^{N-2}\widehat{u}^k_{j}\psi_j(y),\quad v^k_{N}=\sum\limits_{j=0}^{N-2}\widehat{v}^k_{j}\psi_j(y),\quad \theta^k_{N}=\sum\limits_{j=0}^{N-2}\widehat{\theta}^k_{j}\psi_j(y),
$$
where $\{\widehat{u}^k_{j}\}_{j=0}^{N-2}$, $\{\widehat{v}^k_{j}\}^{N-2}_{j=0}$, and $\{\widehat{\theta}^k_{j}\}^{N-2}_{j=0}$ are the expansion coefficients of $u_{N}^k$, $v_{N}^k$, and $\theta_{N}^k$, respectively.

Denoting $\varpi_1=\frac{1}{\tau}+\frac{a_1\omega_{0}^{(\gamma)}}{\tau^{\gamma}}+a_3$, $\varpi_k=\frac{3}{2\tau}+\frac{a_1\omega_{0}^{(\gamma)}}{\tau^{\gamma}}+a_3(k\ge 2)$,
$\varrho_1=\frac{1}{\tau}+\frac{b_1\omega_{0}^{(1-\beta)}}{\tau^{1-\beta}}-\frac{b_3\omega_{0}^{(-\beta)}}{\tau^{-\beta}}-b_4$,
and $\varrho_k=\frac{3}{2\tau}+\frac{b_1\omega_{0}^{(1-\beta)}}{\tau^{1-\beta}}-\frac{b_3\omega_{0}^{(-\beta)}}{\tau^{-\beta}}-b_4(k\ge 2)$,
%$\bar{\mu}=\frac{a_2\omega_{0}^{(\gamma)}}{\tau^{\gamma}}+1$, $\bar{r}=\frac{b_2\omega_{0}^{(-\beta)}}{\tau^{-\beta}}$,
we represent the fully discrete scheme in (\ref{4.5})--(\ref{4.7}) as follows:
\begin{eqnarray}
\begin{split}
\varpi_k\big( u_{N}^{k},\varphi_{N}\big)+\bigg(\frac{a_2\omega_{0}^{(\gamma)}}{\tau^{\gamma}}+1\bigg)\big( \nabla u_{N}^{k}, \nabla \varphi_{N}\big)-a_4\big( v_{N}^{k},\varphi_{N}\big)-a_5\big( \theta_{N}^{k},\varphi_{N}\big)=F^{k}(\varphi_{N}),
\end{split}
\label{n1}
\end{eqnarray}
\begin{eqnarray}
\begin{split}
\varpi_k\big( v_{N}^{k},\varphi_{N}\big)+\bigg(\frac{a_2\omega_{0}^{(\gamma)}}{\tau^{\gamma}}+1\bigg)\big( \nabla v_{N}^{k}, \nabla \varphi_{N}\big)+a_4\big( u_{N}^{k},\varphi_{N}\big)=G^{k}(\varphi_{N}),
\end{split}
\label{n2}
\end{eqnarray}
\begin{eqnarray}
\begin{split}
\varrho_k\big( \theta_{N}^{k},\varphi_{N}\big)+\frac{b_2\omega_{0}^{(-\beta)}}{\tau^{-\beta}}\big( \nabla \theta_{N}^{k}, \nabla \varphi_{N}\big)=P^{k}(\varphi_{N}),
\end{split}
\label{n3}
\end{eqnarray}
where
$$
F^{1}(\varphi_{N})=\bigg(\frac{1}{\tau}-\frac{a_1\omega_{1}^{(\gamma)}}{\tau^{\gamma}}\bigg)\big( u_{N}^{0},\varphi_{N}\big)
-\frac{a_2\omega_{1}^{(\gamma)}}{\tau^{\gamma}}\big( \nabla u_{N}^{0}, \nabla \varphi_{N}\big)+(f^1,\varphi_{N}),
$$
$$
G^{1}(\varphi_{N})=\bigg(\frac{1}{\tau}-\frac{a_1\omega_{1}^{(\gamma)}}{\tau^{\gamma}}\bigg)\big( v_{N}^{0},\varphi_{N}\big)
-\frac{a_2\omega_{1}^{(\gamma)}}{\tau^{\gamma}}\big( \nabla v_{N}^{0}, \nabla \varphi_{N}\big)+(g^1,\varphi_{N}),
$$
$$
P^{1}(\varphi_{N})=\bigg(\frac{1}{\tau}-\frac{b_1\omega_{1}^{(1-\beta)}}{\tau^{1-\beta}}+\frac{b_3\omega_{1}^{(-\beta)}}{\tau^{-\beta}}\bigg)\big( \theta_{N}^{0},\varphi_{N}\big)
-\frac{b_2\omega_{1}^{(-\beta)}}{\tau^{-\beta}}\big( \nabla \theta_{N}^{0}, \nabla \varphi_{N}\big)+(\widetilde{p}^1,\varphi_{N}),
$$
and for $k\ge2$,
$$
F^{k}(\varphi_{N})=\frac{2}{\tau}\big( u_{N}^{k-1},\varphi_{N}\big)-\frac{1}{2\tau}\big( u_{N}^{k-2},\varphi_{N}\big)
-\frac{a_1}{\tau^{\gamma}}\sum^{k-1}_{j=0}\omega_{k-j}^{(\gamma)}( u_{N}^{j},\varphi_{N}\big)
-\frac{a_2}{\tau^{\gamma}}\sum^{k-1}_{j=0}\omega_{k-j}^{(\gamma)}( \nabla u_{N}^{j},\nabla \varphi_{N}\big)+(f^k,\varphi_{N}),
$$
$$
G^{k}(\varphi_{N})=\frac{2}{\tau}\big( v_{N}^{k-1},\varphi_{N}\big)-\frac{1}{2\tau}\big( v_{N}^{k-2},\varphi_{N}\big)
-\frac{a_1}{\tau^{\gamma}}\sum^{k-1}_{j=0}\omega_{k-j}^{(\gamma)}( v_{N}^{j},\varphi_{N}\big)
-\frac{a_2}{\tau^{\gamma}}\sum^{k-1}_{j=0}\omega_{k-j}^{(\gamma)}( \nabla v_{N}^{j},\nabla \varphi_{N}\big)+(g^k,\varphi_{N}),
$$
\begin{eqnarray*}
\begin{split}
P^{k}(\varphi_{N})=&\frac{2}{\tau}\big( \theta_{N}^{k-1},\varphi_{N}\big)-\frac{1}{2\tau}\big( \theta_{N}^{k-2},\varphi_{N}\big)
-\frac{b_1}{\tau^{1-\beta}}\sum^{k-1}_{j=0}\omega_{k-j}^{(1-\beta)}( \theta_{N}^{j},\varphi_{N}\big)
-\frac{b_2}{\tau^{-\beta}}\sum^{k-1}_{j=0}\omega_{k-j}^{(-\beta)}( \nabla \theta_{N}^{j},\nabla \varphi_{N}\big)\\&
+\frac{b_3}{\tau^{-\beta}}\sum^{k-1}_{j=0}\omega_{k-j}^{(-\beta)}( \theta_{N}^{j}, \varphi_{N}\big)
+(\widetilde{p}^k,\varphi_{N}),
\end{split}
\end{eqnarray*}
in which the inner product $(f^{k},\varphi_{N}), (g^{k},\varphi_{N}),(\widetilde{p}^k,\varphi_{N}), k\ge 1$, can be calculated using the numerical quadrature formula.

Let $\varphi_{N}=\psi_l(z), l=0,1,\cdots,N-2$. Then, (\ref{n1})--(\ref{n3}) can be expressed as
\begin{eqnarray}
\begin{split}
&\sum^{N-2}_{j=0}\Bigg(\varpi_k\big( \psi_j(z),\psi_l(z)\big)+\bigg(\frac{a_2\omega_{0}^{(\gamma)}}{\tau^{\gamma}}+1\bigg)\big( \nabla \psi_j(z), \nabla \psi_l(z)\big)\Bigg)\widehat{u}^k_{j}
-a_4\sum^{N-2}_{j=0}\big( \psi_j(z),\psi_l(z)\big)\widehat{v}^k_{j}
\\&-a_5\sum^{N-2}_{j=0}\big(\psi_j(z),\psi_l(z)\big)\widehat{\theta}^k_{j}=F^{k}(\psi_l(z)),
\end{split}
\label{n4}
\end{eqnarray}
\begin{eqnarray}
\begin{split}
&\sum^{N-2}_{j=0}\Bigg(\varpi_k\big( \psi_j(z),\psi_l(z)\big)+\bigg(\frac{a_2\omega_{0}^{(\gamma)}}{\tau^{\gamma}}+1\bigg)\big( \nabla \psi_j(z), \nabla \psi_l(z)\big)\Bigg)\widehat{v}^k_{j}
+a_4\sum^{N-2}_{j=0}\big( \psi_j(z),\psi_l(z)\big)\widehat{u}^k_{j}\\&=G^{k}(\psi_l(z)),
\end{split}
\label{n5}
\end{eqnarray}
\begin{eqnarray}
\begin{split}
\sum_{j=0}^{N-2}\Bigg(\varrho_k\big( \psi_j(z),\psi_l(z)\big)+\frac{b_2\omega_{0}^{(-\beta)}}{\tau^{-\beta}}\big( \nabla \psi_j(z), \nabla \psi_l(z)\big)\Bigg)\widehat{\theta}^k_{j}={P}^{k}(\psi_l(z)).
\end{split}
\label{n6}
\end{eqnarray}

Therefore, the matrix representation of (\ref{n4})--(\ref{n6}) can be inferred as
\begin{eqnarray}
\begin{split}
\Bigg(\varpi_k S+\bigg(\frac{a_2\omega_{0}^{(\gamma)}}{\tau^{\gamma}}+1\bigg)D\Bigg)\mathbf{\widehat{U}^k}
-a_4S\mathbf{\widehat{V}^k}-a_5S\mathbf{\widehat{\Theta}^k}=\mathbf{F^{k}},
\end{split}
\label{n7}
\end{eqnarray}
\begin{eqnarray}
\begin{split}
\Bigg(\varpi_k S+\bigg(\frac{a_2\omega_{0}^{(\gamma)}}{\tau^{\gamma}}+1\bigg)D\Bigg)\mathbf{\widehat{V}^k}
+a_4S\mathbf{\widehat{U}^k}=\mathbf{G^{k}},
\end{split}
\label{n8}
\end{eqnarray}
\begin{eqnarray}
\begin{split}
\Bigg(\varrho_kS+\frac{b_2\omega_{0}^{(-\beta)}}{\tau^{-\beta}}D\Bigg)\mathbf{\widehat{\Theta}^k}=\mathbf{{P}^{k}},
\end{split}
\label{n9}
\end{eqnarray}
where
$$
S=\big(\psi_j(z),\psi_l(z)\big)^{N-2}_{j,l=0}, \quad D=\big(\nabla\psi_j(z),\nabla\psi_l(z)\big)^{N-2}_{j,l=0},\quad
\mathbf{\widehat{U}^k}=[\widehat{u}^k_{0},\widehat{u}^k_{1},\cdots,\widehat{u}^k_{N-2}]^T,$$
$$ \mathbf{\widehat{V}^k}=[\widehat{v}^k_{0},\widehat{v}^k_{1},\cdots,\widehat{v}^k_{N-2}]^T,\quad
\mathbf{\widehat{\Theta}^k}=[\widehat{\theta}^k_{0},\widehat{\theta}^k_{1},\cdots,\widehat{\theta}^k_{N-2}]^T,\quad \mathbf{F^{k}}=[{F}^{k}(\psi_0),{F}^{k-1}(\psi_1),\cdots,{F}^{k}(\psi_{N-2})]^T,$$
$$
\mathbf{G^{k}}=[{G}^{k}(\psi_0),{G}^{k}(\psi_1),\cdots,{G}^{k}(\psi_{N-2})]^T,\quad
\quad
\mathbf{{P}^{k}}=[{P}^{k}(\psi_0),
{P}^{k}(\psi_1),\cdots,{P}^{k}(\psi_{N-2})]^T.
$$

For the fast  algorithm, when $k\ge k_0+1$, $\omega^{(\gamma)}_{k}, \omega^{(1-\beta)}_{k}, \omega^{(-\beta)}_{k}$ are calculated using (\ref{ff1}), which reduces the computation time and memory requirements compared with the direct calculation method. Solving (\ref{n7})--(\ref{n9}) gives numerical solutions to the time-fractional MHD coupled flow and heat transfer model of the generalized second-grade fluid.
\subsection{Numerical results}
To verify the correctness of our theoretical analysis, we now present some results using the numerical scheme given by (\ref{4.5})--(\ref{4.7}). The error is approximated as $\text{Error}(\tau, N)=\text{error}_u(\tau, N)+\text{error}_v(\tau, N)+\text{error}_{\theta}(\tau, N)$, where $\text{error}_u(\tau, N), \text{error}_v(\tau, N), \text{error}_{\theta}(\tau, N)$ are $L^2$ errors related to $u, v, \theta$, respectively, and the temporal convergence order is derived as $\text{Order}=\text{log}_{\frac{\tau_1}{\tau_2}}\frac{\text{Error}(\tau_1,N)}{\text{Error}(\tau_2,N)}$.
\begin{example} We consider the following time-fractional coupled equations with $(z,t)\in[0,1]\times[0,1]$:
\begin{equation}
\begin{aligned}
\frac{\partial u}{\partial t}+{_{~~0}^{RL}D_t^{\gamma}}u-{_{~~0}^{RL}D_t^{\gamma}}\frac{\partial^{2} u}{\partial z^{2}}-\frac{\partial^{2} u}{\partial z^{2}}+u-v-\theta=f(z,t),
\end{aligned}
\end{equation}
\begin{equation}
\begin{aligned}
\frac{\partial v}{\partial t}+{_{~~0}^{RL}D_t^{\gamma}}v-{_{~~0}^{RL}D_t^{\gamma}}\frac{\partial^{2} v}{\partial z^{2}}-\frac{\partial^{2} u}{\partial z^{2}}+v+u=g(z,t),
\end{aligned}
\end{equation}
\begin{equation}
\begin{aligned}
\frac{\partial \theta}{\partial t}+{_{~~0}^{RL}D_t^{\beta}}\frac{\partial \theta}{\partial t}-\frac{\partial^2 \theta}{\partial z^2}-\theta- {_{~~0}^{RL}D_t^{\beta}}\theta=p(z,t),
\end{aligned}
\end{equation}
{
\begin{equation}
u(z,0)=0, v(z,0)=0, {\theta}(z,0)=0,\quad z\in[0,L],
\end{equation}
\begin{equation}
u(0,t)=0, v(0,t)=0, {\theta}(0,t)=0, \quad t\in[0,\bar{T}],
\end{equation}
\begin{equation}
u(L,t)=0, v(L,t)=0, {\theta}(L,t)=0, \quad t\in[0,\bar{T}],
\end{equation}}
where
\begin{eqnarray*}
\begin{split}
f(z,t)=&3t^2\sin(2\pi z)+\frac{6}{\Gamma(4-\gamma)}t^{3-\gamma}\sin(2\pi z)+\frac{24\pi^2}{\Gamma(4-\gamma)}t^{3-\gamma}\sin(2\pi z)+4\pi^2t^3\sin(2\pi z)\\&+t^3\sin(2\pi z)-t^2(z^2-z)-t^2\sin(2\pi z),\\
g(z,t)=&2t(z^2-z)+\frac{2}{\Gamma(3-\gamma)}t^{2-\gamma}(z^2-z)-\frac{4}{\Gamma(3-\gamma)}t^{2-\gamma}(z^2-z)-2t^2+t^2(z^2-z)\\&+t^3\sin(2\pi z),\\
p(z,t)=&2t\sin(2\pi z)+\frac{2}{\Gamma(2-\beta)}t^{1-\beta}\sin(2\pi z)+4\pi^2t^2\sin(2\pi z)-t^2\sin(2\pi z)\\&-\frac{2}{\Gamma(3-\beta)}t^{2-\beta}\sin(2\pi z),
\end{split}
\end{eqnarray*}
and $0<\gamma,\beta<1$. The exact solutions of this system are $$u(z,t)=t^3\sin(2\pi z),\quad v(z,t)=t^2(z^2-z),\quad \theta(z,t)=t^2\sin(2\pi z).$$
\end{example}

We first fix $N=32$ and examine different values of the time step $\tau$ to verify the temporal convergence accuracy.  Table \ref{ext1} lists the temporal $L^2$ errors and convergence order for different values of $\gamma,\beta$ with direct method and fast method. {These results demonstrate that the convergence of the two numerical schemes achieves second-order accuracy in time, which shows the fast method can't lower the convergence accuracy. Next, a time step of $\tau=1/1000$ is considered to test the spatial accuracy. The $L^2$ errors related to $N$ obtained by the direct method and fast method on a semi-logarithmic scale for different values of $\gamma,\beta$ are plotted in Figure \ref{exf1}. The spatial errors decrease exponentially with $N$, demonstrating the spectral accuracy in space.} To further reveal the efficiency of the fast algorithm, Figure \ref{exf2} displays the computation times of the fast method and the direct method with different time steps. Obviously, the computation time of the fast method increases linearly, whereas that of the direct method increases super-linearly (quadratic complexity). {Theoretically, the computational time of the
direct method increases proportional to $\bar{K}^2$ and the computational time of the fast method increases proportional to $Q\bar{K}$, which is consistent with Figure \ref{exf2}. Therefore, both theoretically and experimentally, it shows that the fast method is more efficient than direct method.} Furthermore, The difference between the fast method solutions and the direct method solutions for different values of $\gamma,\beta$ is illustrated in Figure \ref{exf3}. Clearly, the magnitude of the error is very small. These results indicate that the fast method can significantly reduce the computation time and can't introduce additional errors.

\begin{table}[htbp]%\footnotesize
\caption{\label{tab:test}  {$L^2$-errors, convergence order for different $\gamma,\beta$ with the direct method and the fast method.}}
\centering
\begin{tabular}{llllllllllllllllllllllll}
\hline
&&&\multicolumn{1}{l}{\multirow{2}{*}{$\tau$}}   &&\multicolumn{3}{c}{\multirow{1}{*}{Direct method}} &&\multicolumn{3}{c}{\multirow{1}{*}{Fast method}} \\
                                           \cline{6-8}  \cline{10-12}
 &  && && Error       && Order     && Error       && Order                   \\
\hline
 &                          && 1/200           &&7.5330e-05     &&   -        && 7.5374e-05     &&    -                     \\
 &                          && 1/400           && 1.8846e-05    && 1.9990     &&  1.8868e-05   &&   1.9981                  \\
 & $\gamma=0.4, \beta=0.6$  && 1/800          &&4.7134e-06    &&   1.9994       &&4.7539e-06    &&    1.9888              \\
 &                          && 1/1600          && 1.1787e-06     && 1.9996        && 1.1820e-06     &&  2.0079               \\
 &                          && 1/3200           &&   2.9472e-07    &&  1.9998       &&  2.5748e-07    &&  2.1987                   \\
 \hline

&                           && 1/200           &&  1.0499e-04    &&    -        &&  1.0499e-04    &&   -  \\
&                           && 1/400           &&   2.6290e-05   &&   1.9976      &&    2.6290e-05   &&    1.9976              \\
&$\gamma=0.8,\beta=0.3$     && 1/800           &&   6.5795e-06   &&  1.9985     &&   6.5795e-06   &&  1.9985                  \\
&                           && 1/1600          &&   1.6460e-06    && 1.9990    &&   1.6461e-06    &&   1.9989                  \\
&                           && 1/3200          &&  4.1172e-07    &&   1.9993     && 4.1191e-07    &&     1.9987                \\

\hline
\end{tabular}
\label{ext1}
\end{table}\label{3}

\begin{figure}[htbp]
\centering
\includegraphics[width=7cm]{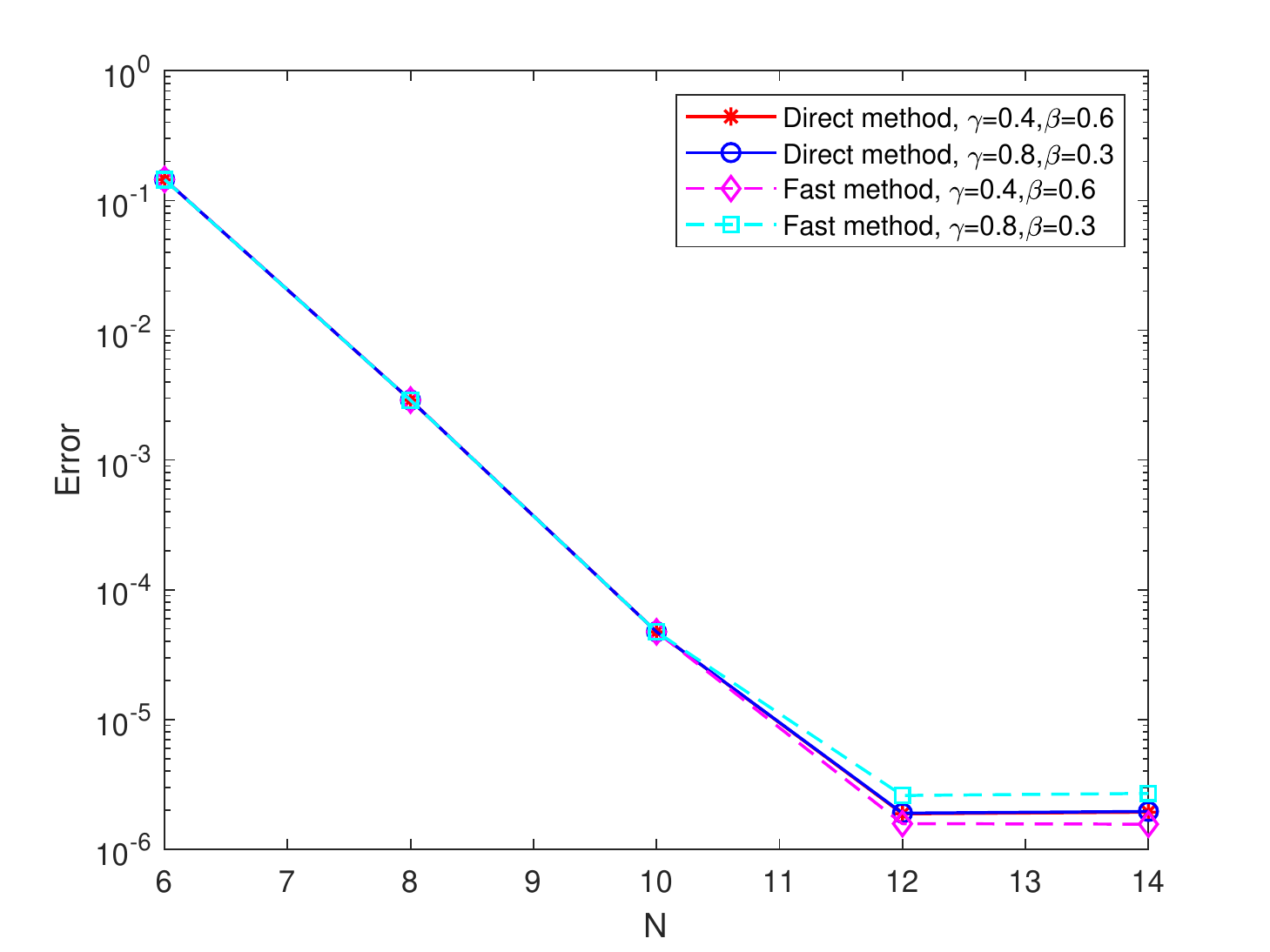}
\caption{ $L^2$-errors related to $N$ for different $\gamma,\beta$.}
\label{exf1}
\end{figure}

\begin{figure}[htbp]
\centering
\includegraphics[width=7cm]{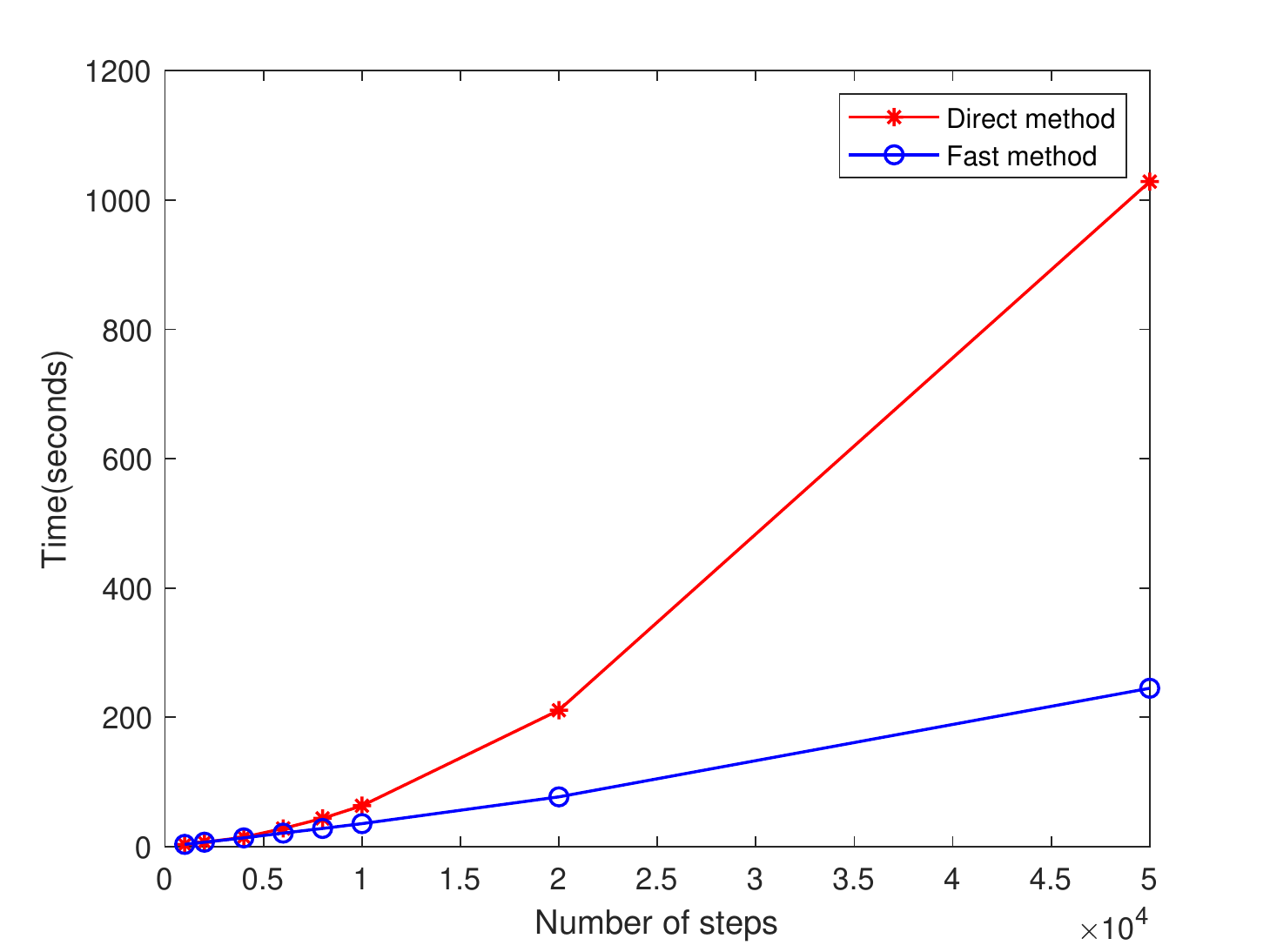}
\caption{ Computational time of the fast method and the direct method with $\gamma=0.6, \beta=0.4$.}
\label{exf2}
\end{figure}
\begin{figure}[htbp]
\centering
\includegraphics[width=7cm]{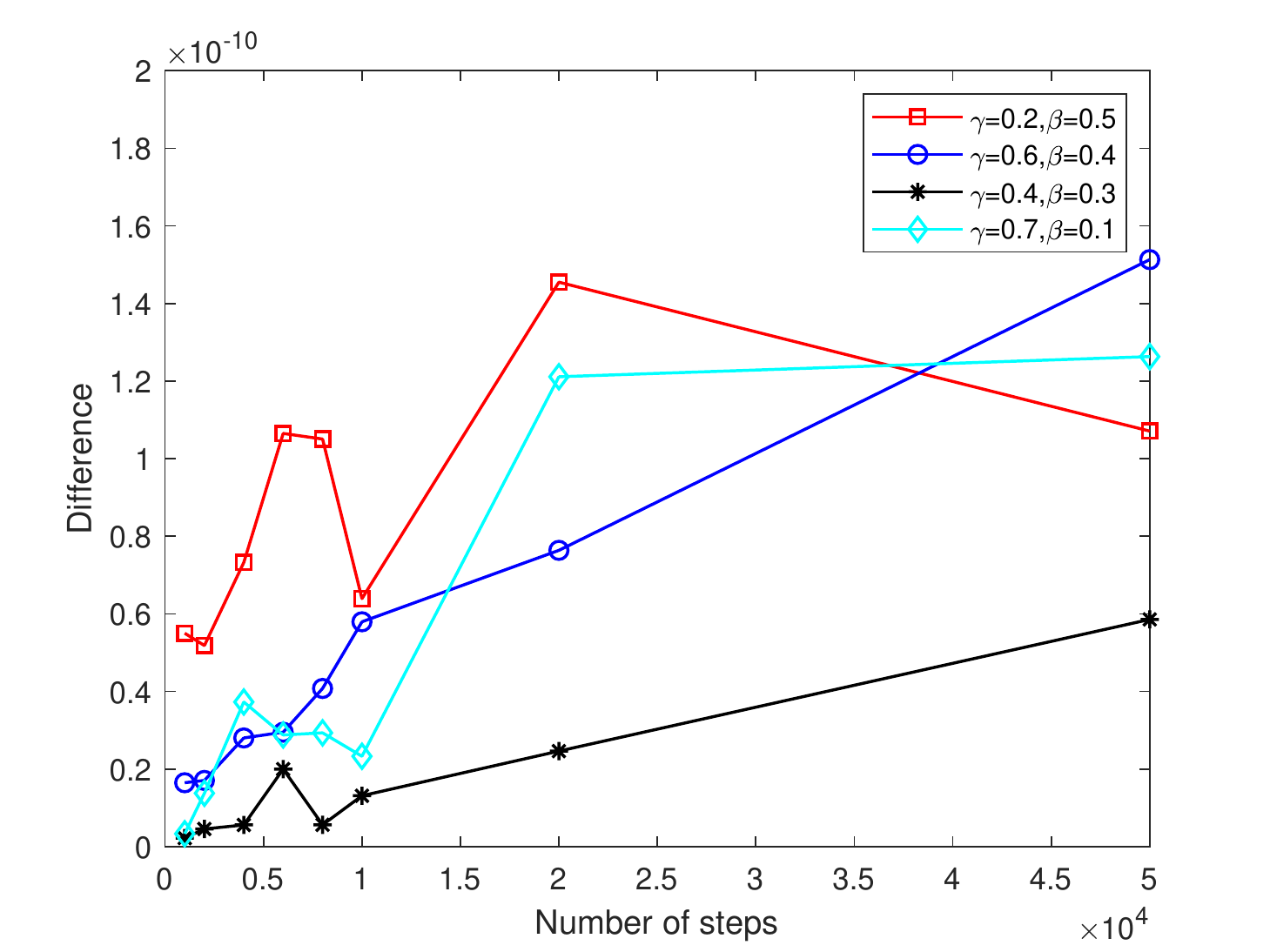}
\caption{ Difference between the numerical solutions of the fast method and the direct method.}
\label{exf3}
\end{figure}

\begin{example} We consider the unsteady free convection fractional MHD flow and heat transfer of a generalized second-grade fluid with a Hall current passing through a porous medium near a vertical infinite plate:
\begin{equation}
\begin{aligned}
\frac{\partial u}{\partial t}&=\left(1+\alpha\ {_{~~0}^{RL}D_{t}^{\gamma}}\right) \frac{\partial^{2} u}{\partial z^{2}}+\frac{M^{2}}{1+m^{2}}(m v-u)
-\frac{1}{K}\left(1+\alpha\ {_{~~0}^{RL}D_{t}^{\gamma}}\right) u+Gr\theta,
\end{aligned}
\end{equation}
\begin{equation}
\begin{aligned}
\frac{\partial v}{\partial t}&=\left(1+\alpha\ {_{~~0}^{RL}D_{t}^{\gamma}}\right) \frac{\partial^{2} v}{\partial z^{2}}-\frac{M^{2}}{1+m^{2}}(v+m u)
-\frac{1}{K}\left(1+\alpha\ {_{~~0}^{RL}D_{t}^{\gamma}}\right) v,
\end{aligned}
\end{equation}
\begin{equation}
\begin{aligned}
(1+\lambda\ {_{~~0}^{RL}D_t^{\beta}})\frac{\partial \theta}{\partial t}&=\frac{1+R}{Pr}\frac{\partial^2 \theta}{\partial z^2}+\frac{H}{Pr}(1+\lambda\ {_{~~0}^{RL}D_t^{\beta}})\theta,
\end{aligned}
\end{equation}
\begin{equation}
u=0, v=0, \theta=0, \text { as } z \geq 0 \text { and } t=0,
\end{equation}
\begin{equation}
u=t^{3}, v=0, \text { as } z=0 \text { and } t > 0,
\end{equation}
\begin{equation}
\theta=\left\{\begin{array}{ll}
t^2 & \text { if } 0<t<1, {z=0},\\
1 & \text { if } t \geq 1,{z=0},
\end{array}\right.
\end{equation}
\begin{equation}
u \rightarrow 0, v \rightarrow 0, \theta \rightarrow 0, \text { as } z \rightarrow \infty \text { and } t > 0.
\end{equation}
\end{example}

In the numerical simulation, we take $0\le z\le4$, $t=0.5$, and set $ m=1, M=2,K=2, R=1, Pr=2, H=1, Gr=10, \lambda=1$, several of these values are reasonable according to \cite{Jiang}. As no exact solutions can be determined for this problem, we use the numerical solutions with $\tau=1/40000, N=80$ obtained by the fast method for comparison. Table \ref{ex2t1} presents the temporal $L^2$ errors and convergence order for different values of $\gamma,\beta$ with $N=80$. From Table \ref{ex2t1}, it is clear that the convergence of the numerical scheme achieves second-order accuracy in time. Thus, the numerical scheme is stable and effective for solving the problem of unsteady free convection fractional MHD flow and heat transfer for a generalized second-grade fluid. Next, we set $\gamma=0.8, \beta=0.6, \tau=1/200, N=80$ to analyze the effects of the relevant parameters on the fractional MHD flow and heat transfer behavior in detail.

\begin{table}[htbp]\small%\footnotesize
\caption{\label{tab:test}  $L^2$ errors, convergence order, CPU time  for different $\gamma,\beta$.}
\centering
\begin{tabular}{llllllllllllll}

\hline
 &                          &&& $\tau$      &&& Error          &&& Order                         \\
\hline

 &                          &&& 1/40        &&&  5.2020e-02     &&&                                     \\
 &                          &&& 1/80        &&& 1.2733e-02    &&&   2.0305                                    \\
 & $\gamma=0.4,\beta=0.6$   &&& 1/160        &&&  3.1208e-03      &&&    2.0286                          \\
 &                          &&& 1/320       &&& 7.6509e-04      &&&  2.0282                           \\
 &                          &&& 1/640       &&& 1.8556e-04     &&&  2.0438                           \\
 \hline
&                           &&& 1/40        &&&    5.2295e-02    &&&                   \\
&                           &&& 1/80        &&&  1.2690e-02     &&&    2.0430                \\
 & $\gamma=0.5,\beta=0.5$   &&& 1/160        &&&  3.0808e-03     &&&  2.0423                        \\
&                           &&& 1/320       &&&  7.4862e-04     &&& 2.0410                      \\
&                           &&& 1/640       &&&  1.8020e-04     &&&  2.0547                      \\
\hline
\end{tabular}
\label{ex2t1}
\end{table}
The profiles of velocity $u,v$ for various parameter combinations are shown in Figures \ref{gamma}--\ref{t}. Figure \ref{gamma} shows that the magnitudes of $u$ and $v$ decrease as the fractional order $\gamma$ increases. We can see that $\gamma$ plays an inhibiting role in the fluid flow. It is also apparent that an increment in $\gamma$ causes the thickness of the velocity boundary layer to increase, which indicates that the fractional equation with a relaxation time has a short memory of obvious states and responds slowly to an external body force. In Figure \ref{alpha}, the magnitudes of $u, v$ decrease and the profiles change rapidly as the second-grade parameter $\alpha$ increases, suggesting that $\alpha$ has a negative effect on the velocity. The effect of the permeability parameter $K$ on the velocity is presented in Figure \ref{K}. The magnitudes of $u$ and $v$ increase as $K$ becomes larger. The effect of the Hartmann number $M$ on the velocity profiles is illustrated in Figure \ref{M}. As the value of $M$ increases, the magnitudes of $u, v$ decrease. This is because the applied magnetic field produces a drag in the form of the Lorentz force, which slows the fluid flow. The effect of the Hall parameter $m$ on $u, v$ is shown in Figure \ref{mm}. There is a clear enhancement in the magnitudes of $u, v$ as $m$ increases. This is mostly because the Lorentz force decreases as $m$ increases, weakening the restraining effect of the magnetic field on the fluid flow. Figure \ref{Gr} displays the changes in $u, v$ with respect to the thermal Grashof number $Gr$. The velocity components $u, v$ increase in magnitude as $Gr$ becomes larger, which implies that $Gr$ can promote the fluid flow. Figure \ref{t} displays the profiles of velocity $u, v$ with respect to $t$, and shows that the magnitudes of $u, v$ increase over time.

\begin{figure*}[htbp]
\centering
\subfigure{
\begin{minipage}[t]{0.42\textwidth}
\centering
\includegraphics[width=6cm]{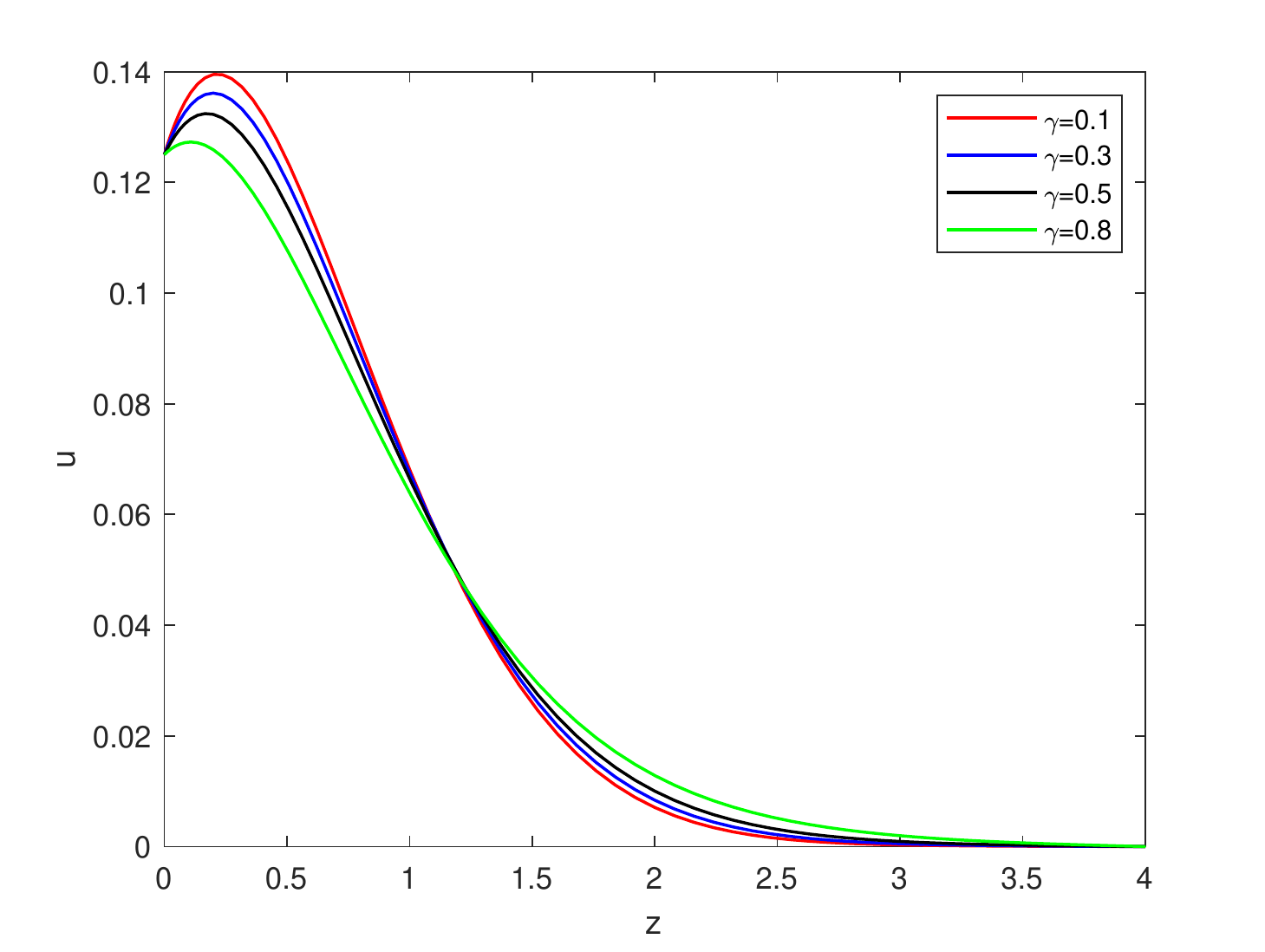}
\end{minipage}
}
\subfigure{
\begin{minipage}[t]{0.42\textwidth}
\centering
\includegraphics[width=6cm]{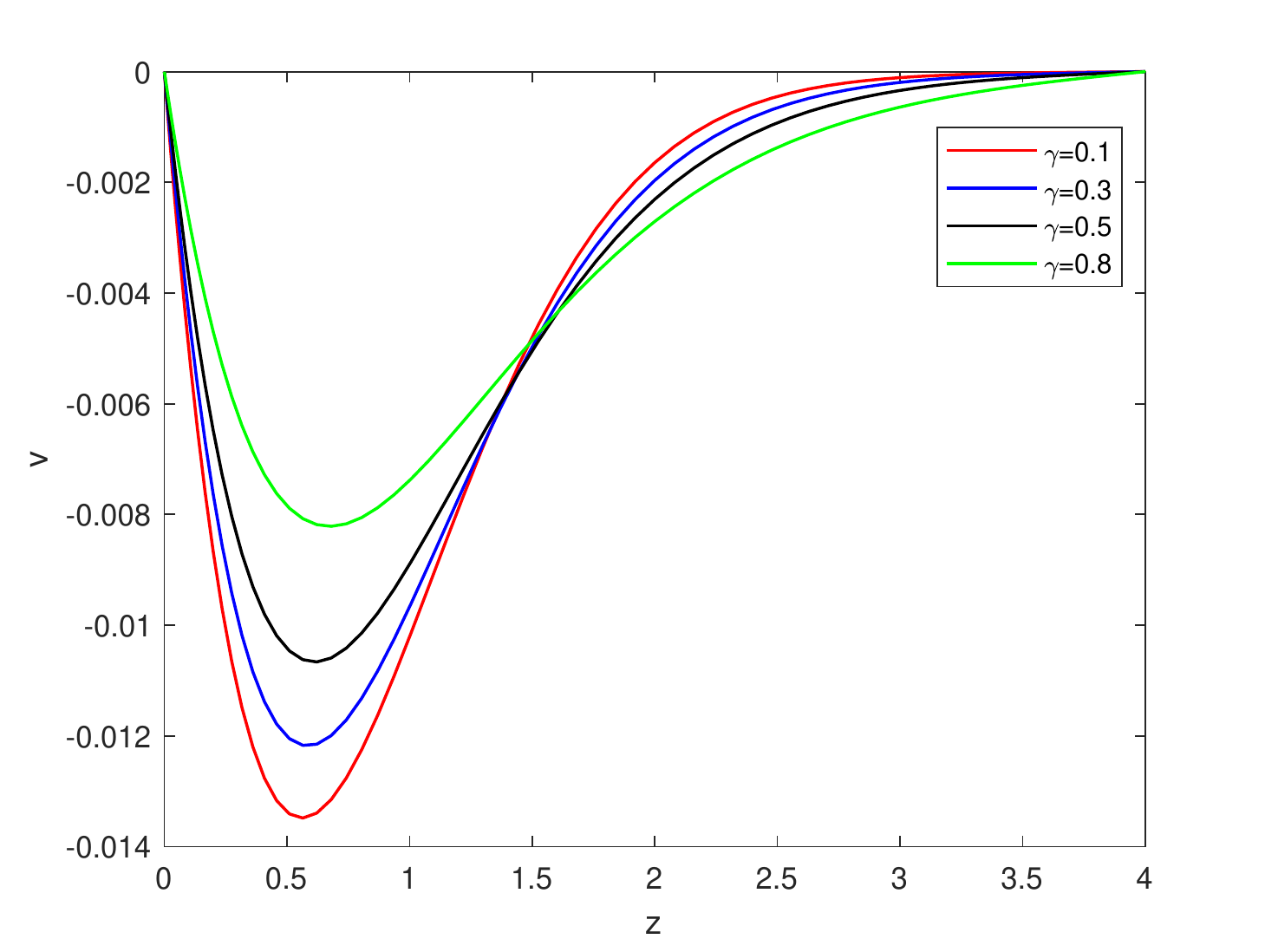}
\end{minipage}
}
\caption{ Velocity $u$ and $v$ for different $\gamma$. }
\label{gamma}
\end{figure*}
\begin{figure*}[htbp]
\centering
\subfigure{
\begin{minipage}[t]{0.42\textwidth}
\centering
\includegraphics[width=6cm]{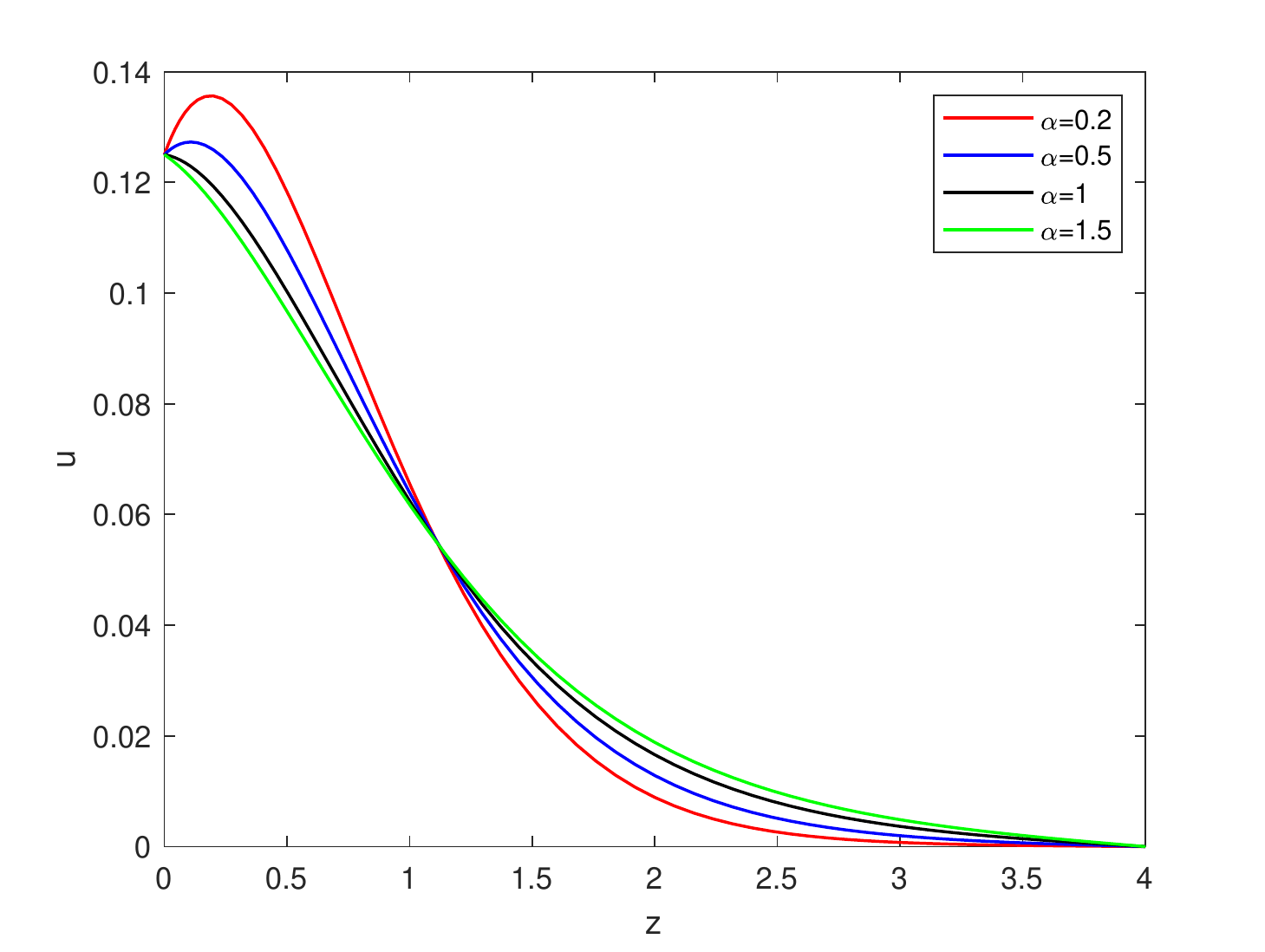}
\end{minipage}
}
\subfigure{
\begin{minipage}[t]{0.42\textwidth}
\centering
\includegraphics[width=6cm]{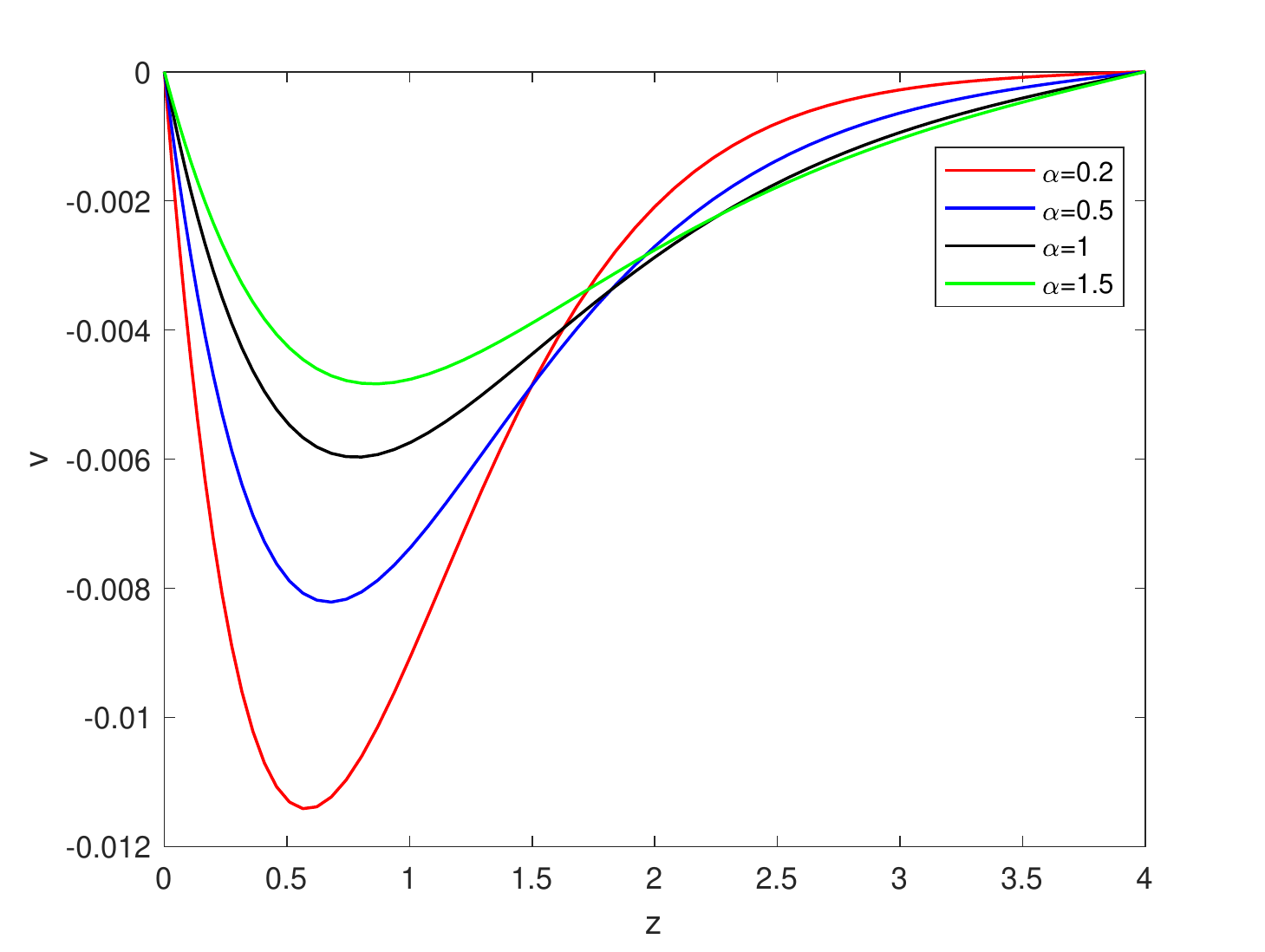}
\end{minipage}
}
\caption{ Velocity $u$ and $v$ for different $\alpha$. }
\label{alpha}
\end{figure*}
\begin{figure*}[htbp]
\centering
\subfigure{
\begin{minipage}[t]{0.42\textwidth}
\centering
\includegraphics[width=6cm]{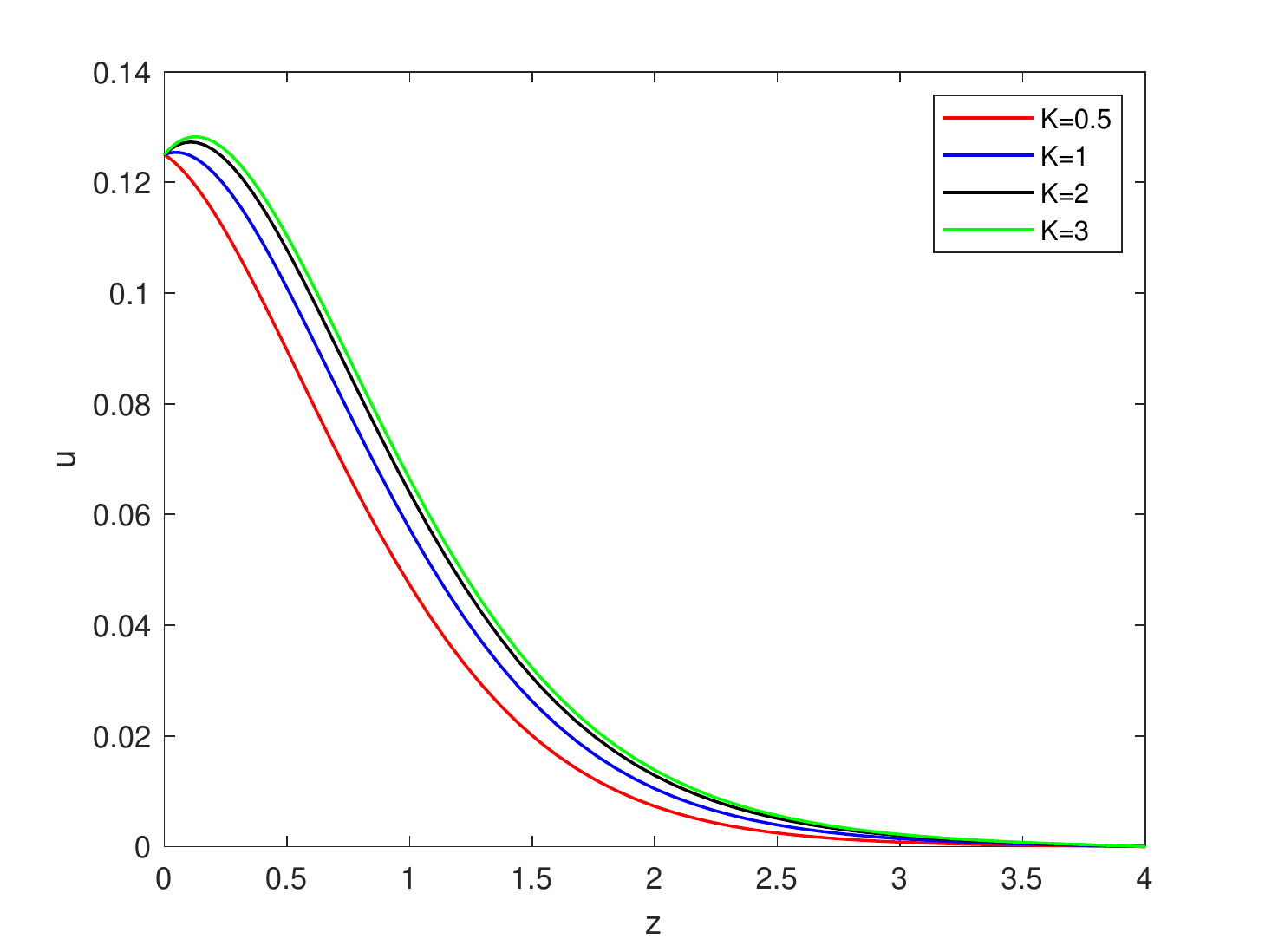}
\end{minipage}
}
\subfigure{
\begin{minipage}[t]{0.42\textwidth}
\centering
\includegraphics[width=6cm]{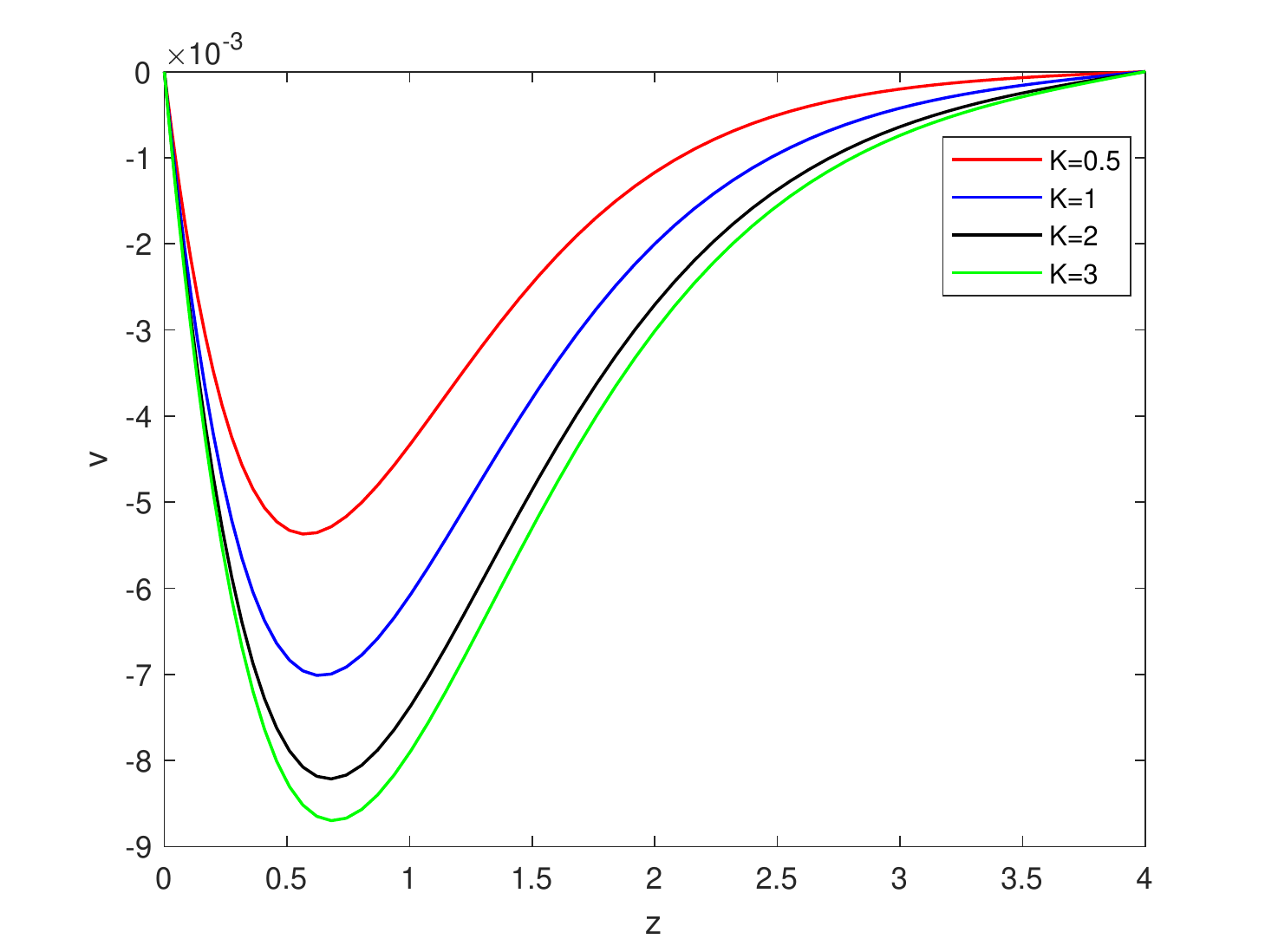}
\end{minipage}
}
\caption{ Velocity $u$ and $v$ for different $K$.}
\label{K}
\end{figure*}
\begin{figure*}[htbp]
\centering
\subfigure{
\begin{minipage}[t]{0.42\textwidth}
\centering
\includegraphics[width=6cm]{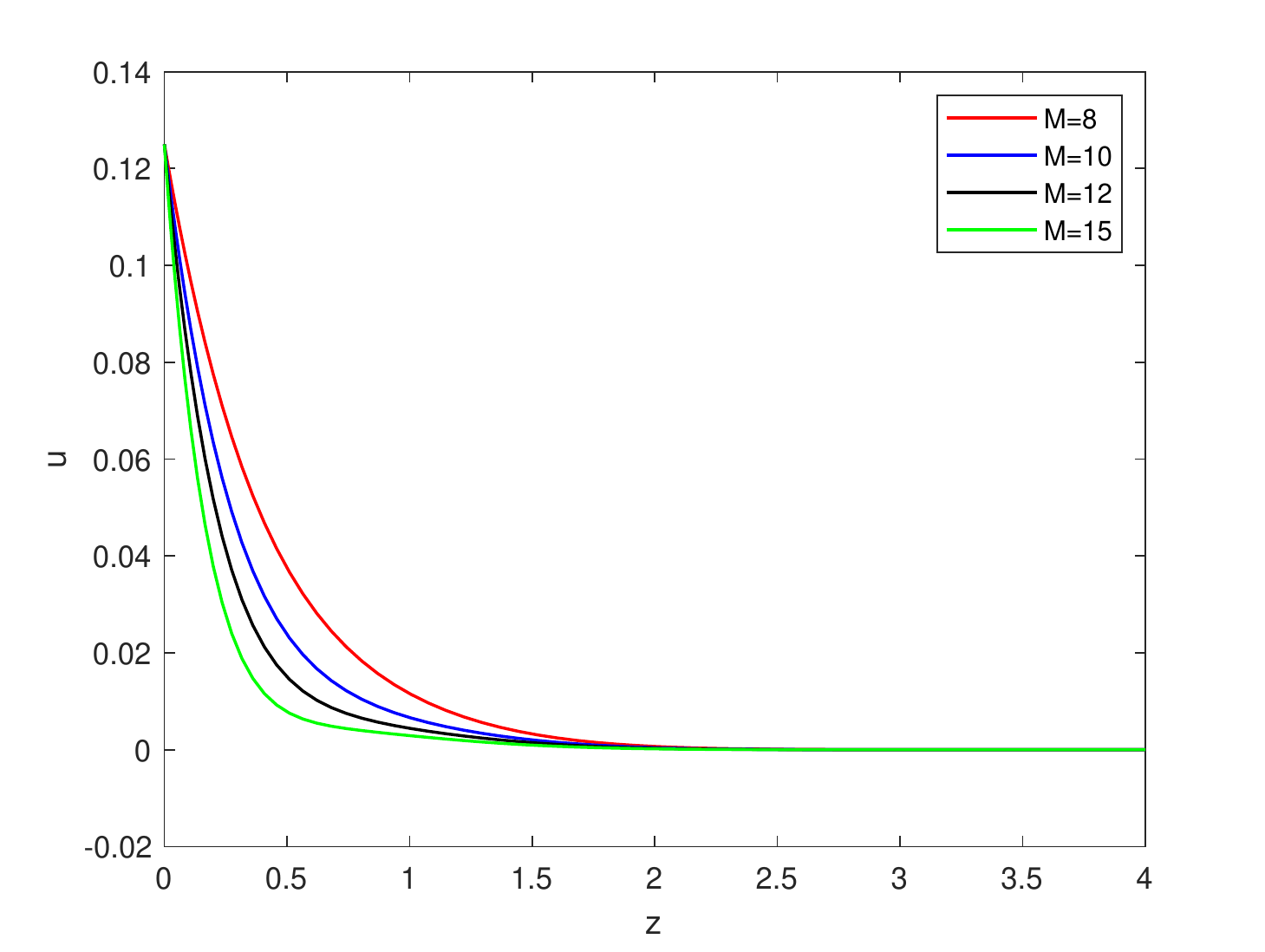}
\end{minipage}
}
\subfigure{
\begin{minipage}[t]{0.42\textwidth}
\centering
\includegraphics[width=6cm]{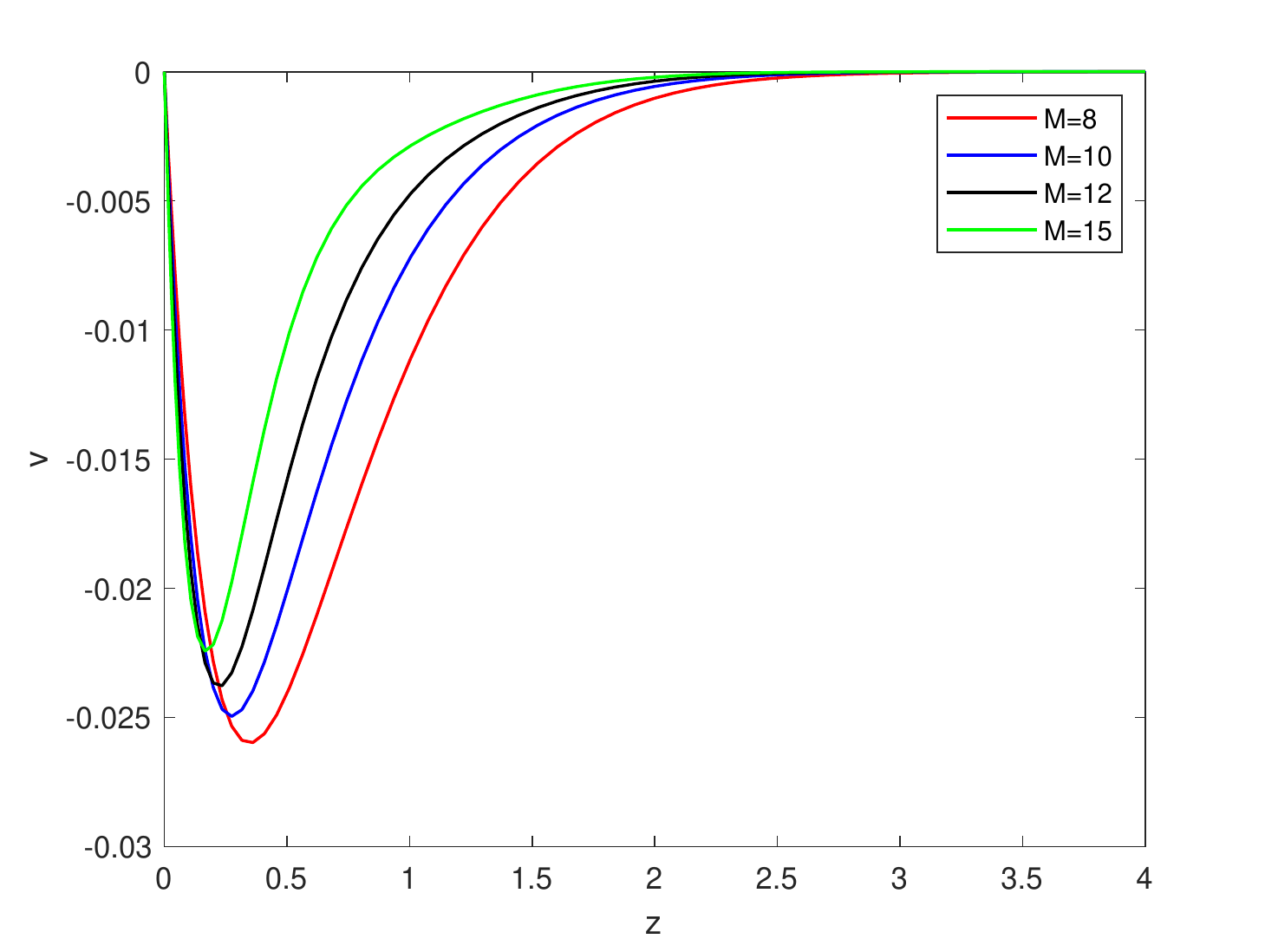}
\end{minipage}
}
\caption{ Velocity $u$ and $v$ for different $M$. }
\label{M}
\end{figure*}
\begin{figure*}[htbp]
\centering
\subfigure{
\begin{minipage}[t]{0.42\textwidth}
\centering
\includegraphics[width=5.8cm]{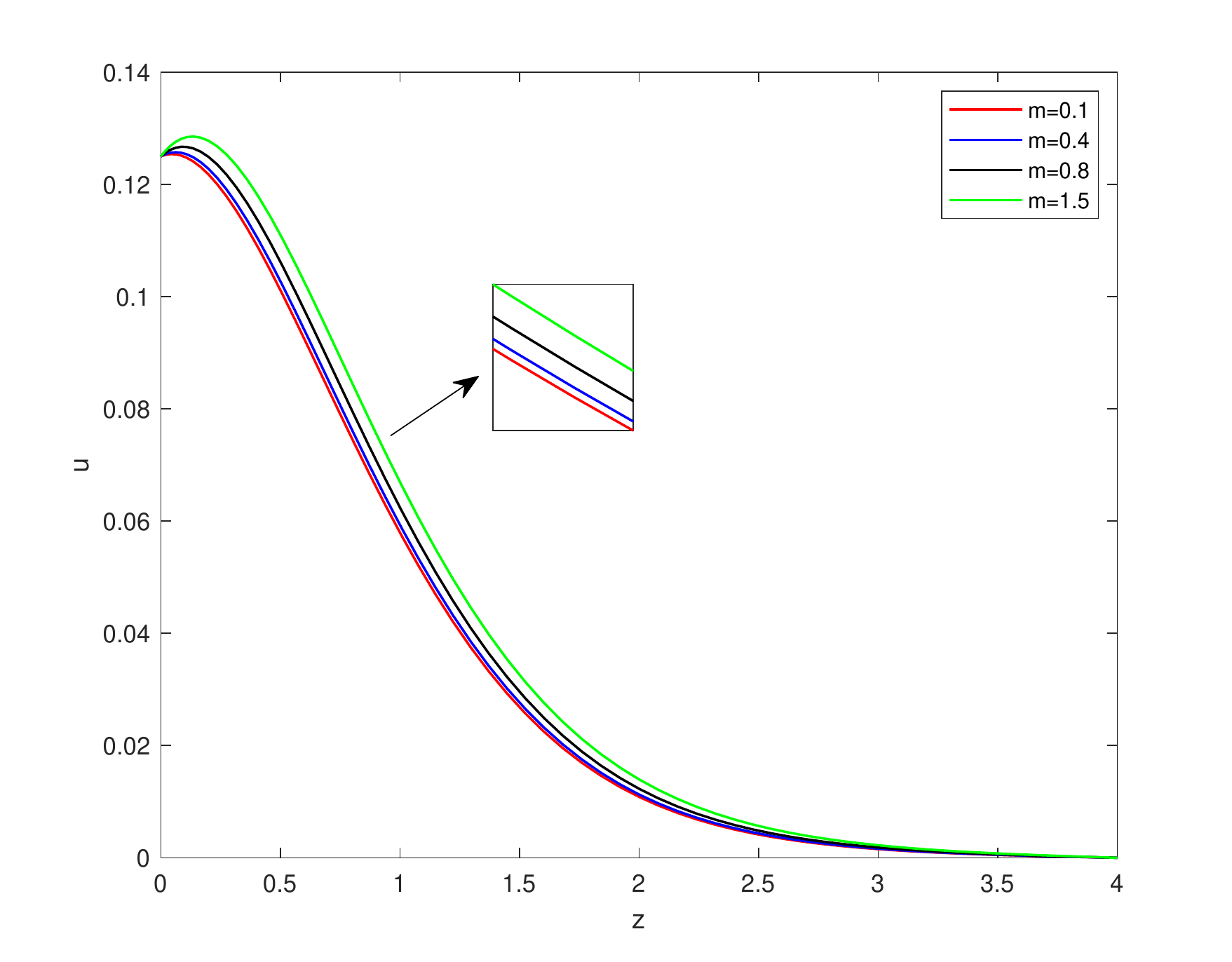}
\end{minipage}
}
\subfigure{
\begin{minipage}[t]{0.42\textwidth}
\centering
\includegraphics[width=6cm]{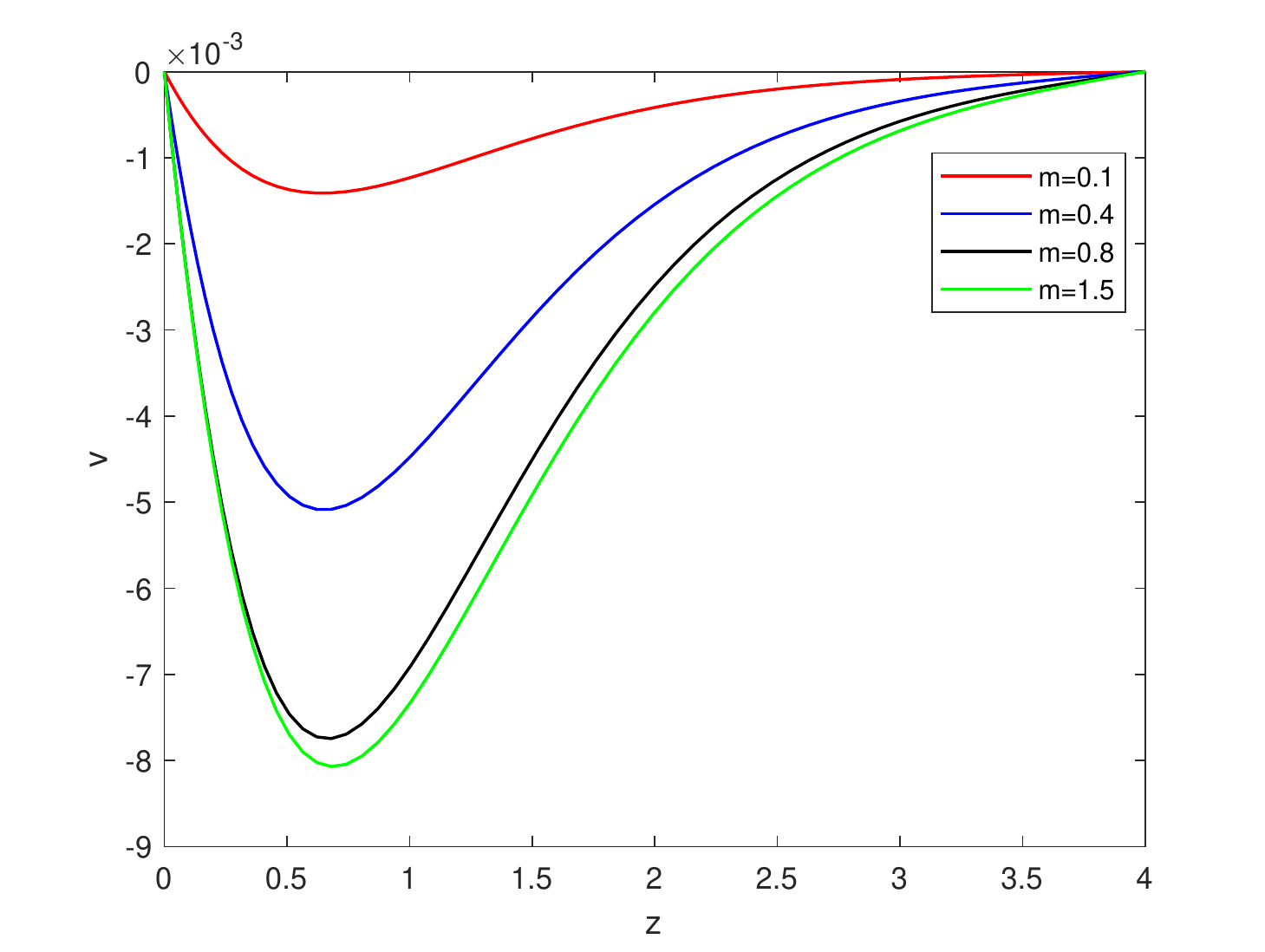}
\end{minipage}
}
\caption{ Velocity $u$ and $v$ for different $m$.}
\label{mm}
\end{figure*}
\begin{figure*}[htbp]
\centering
\subfigure{
\begin{minipage}[t]{0.42\textwidth}
\centering
\includegraphics[width=6cm]{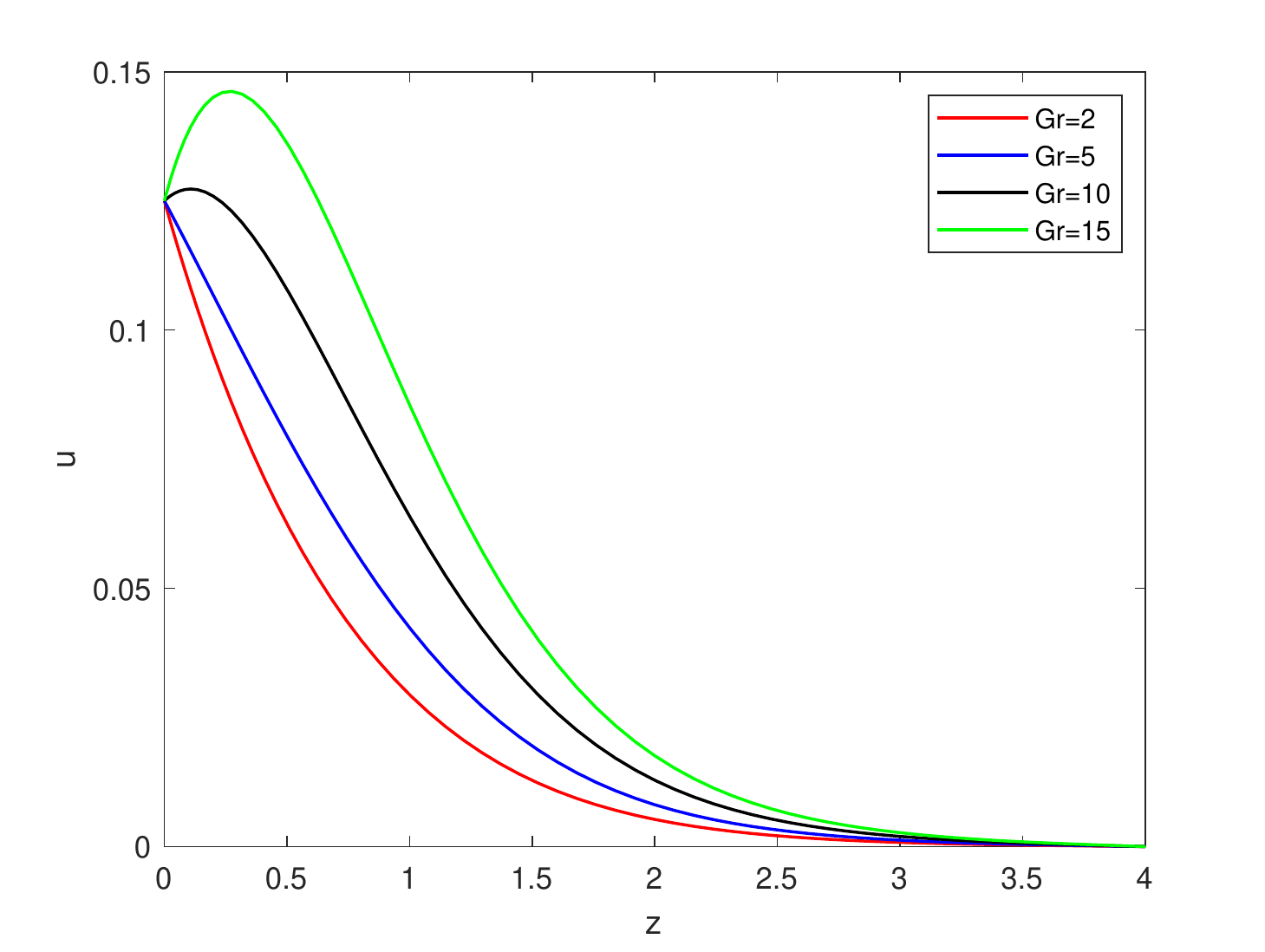}
\end{minipage}
}
\subfigure{
\begin{minipage}[t]{0.42\textwidth}
\centering
\includegraphics[width=6cm]{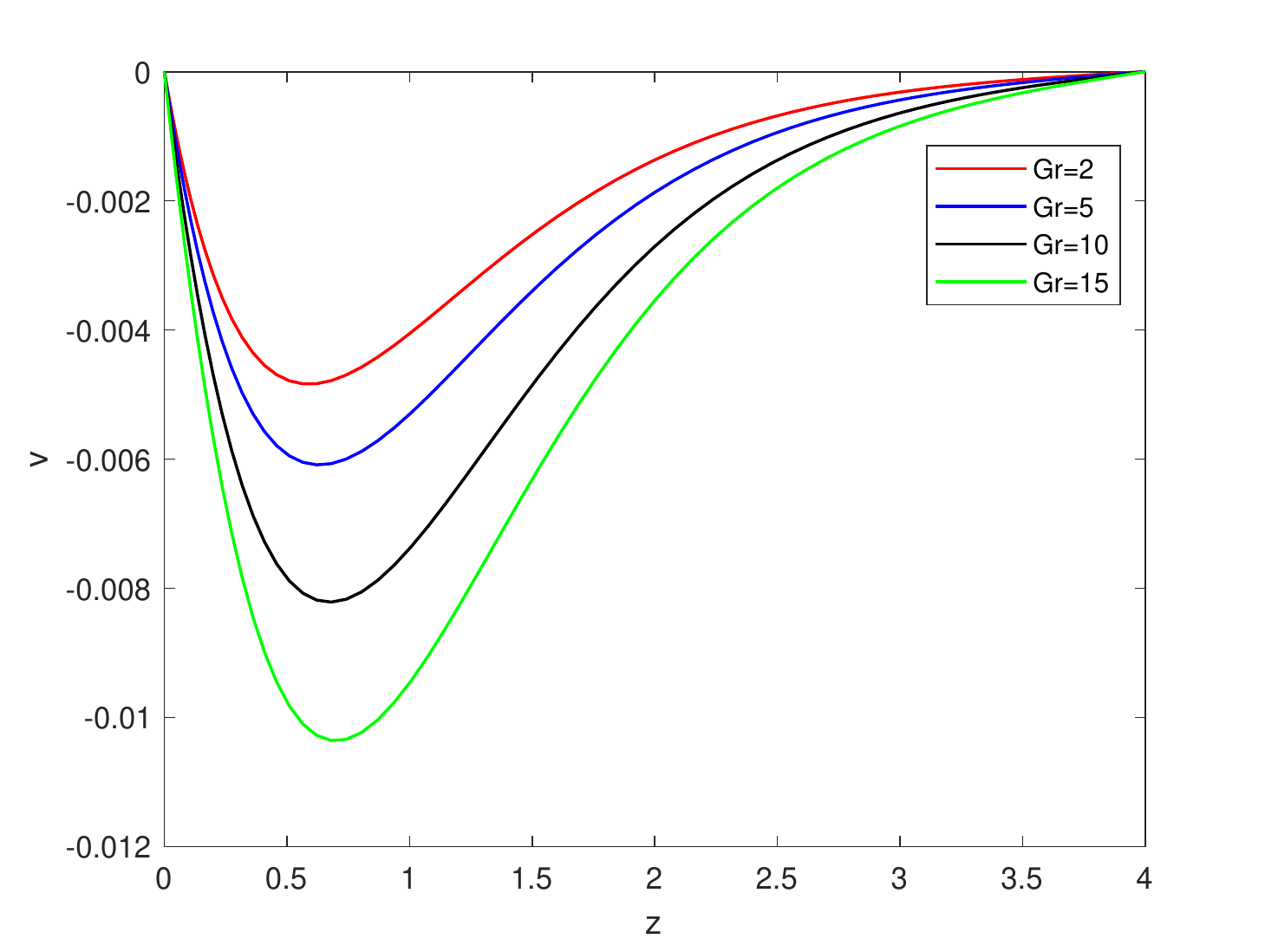}
\end{minipage}
}
\caption{ Velocity $u$ and $v$ for different $Gr$.}
\label{Gr}
\end{figure*}
\begin{figure*}[htbp]
\centering
\subfigure{
\begin{minipage}[t]{0.42\textwidth}
\centering
\includegraphics[width=6cm]{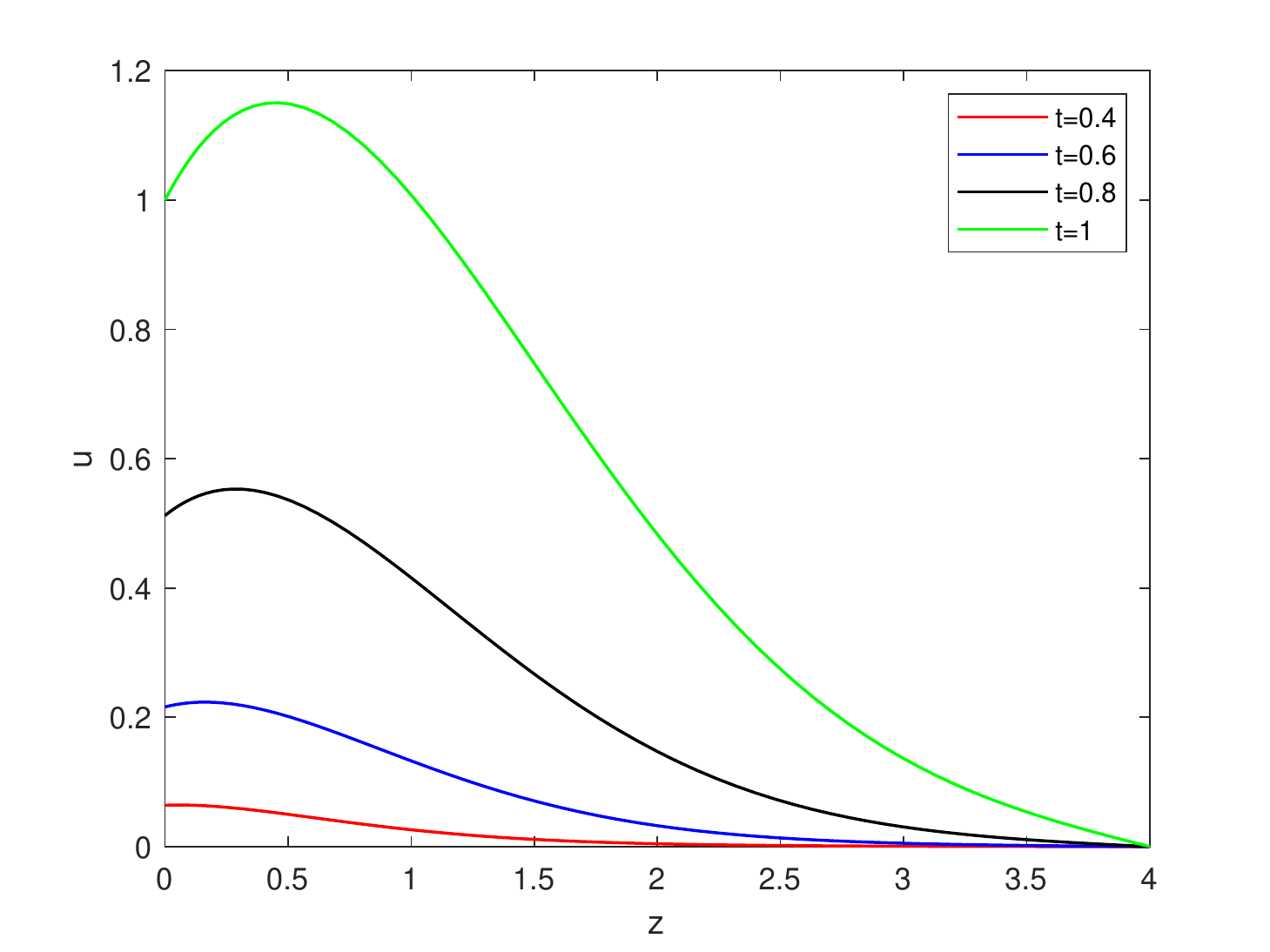}
\end{minipage}
}
\subfigure{
\begin{minipage}[t]{0.42\textwidth}
\centering
\includegraphics[width=6cm]{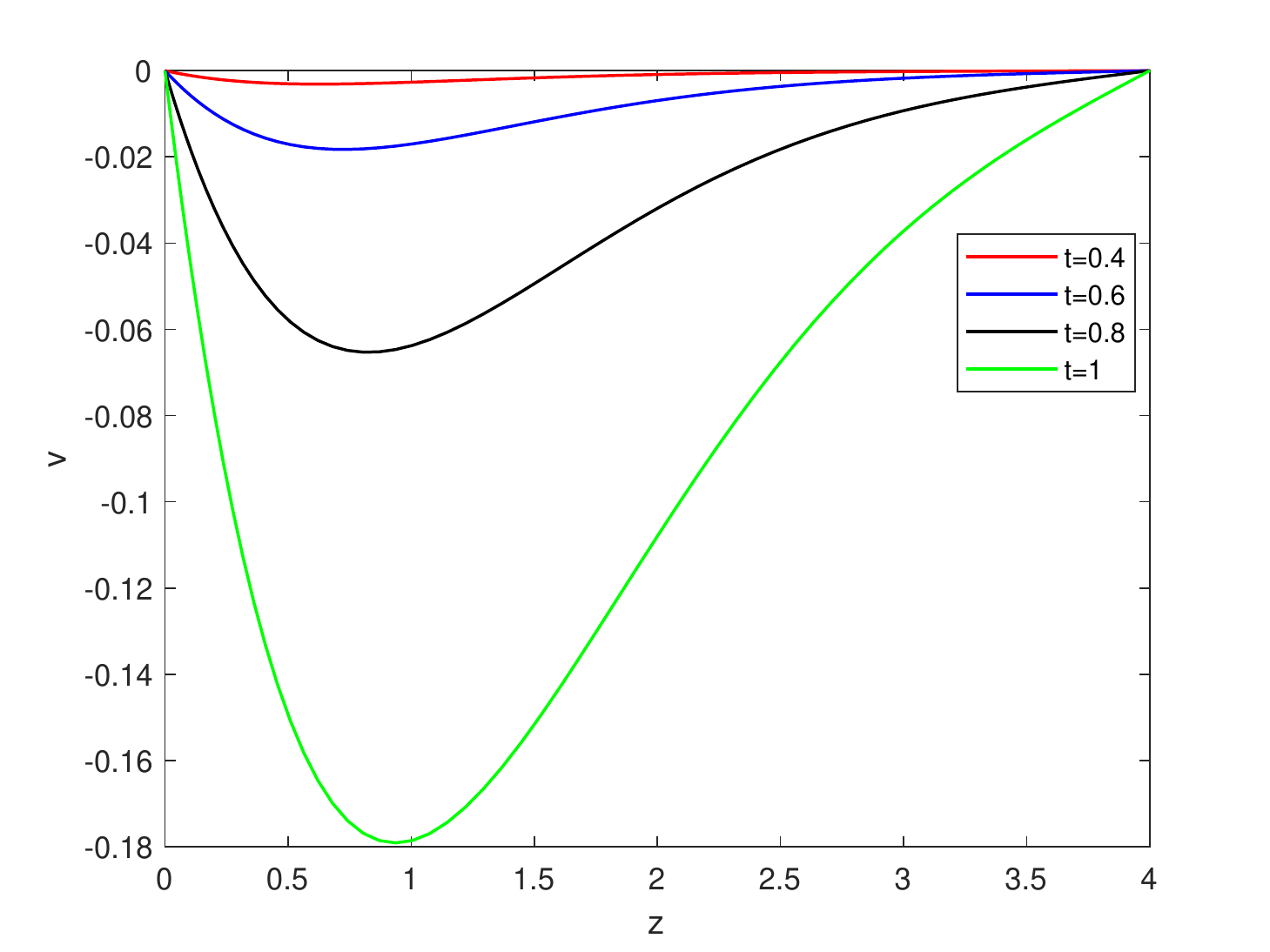}
\end{minipage}
}
\caption{ Velocity $u$ and $v$ for different $t$.}
\label{t}
\end{figure*}

The changes in temperature $\theta$ under different parameter combinations are shown in Figures \ref{betatheta}--\ref{ttheta}. In Figure \ref{betatheta}, $\theta$ decreases as the fractional order $\beta$ increases, which indicates that $\beta$ has a retarding influence on the temperature profile. The same trend occurs in Figure \ref{lambdatheta}, showing that the temperature relaxation parameter $\lambda$ has a similar inhibiting effect on the temperature. Figure \ref{Rtheta} reveals that $\theta$ increases as the thermal radiation parameter $R$ increases. This is because the thermal radiation parameter reduces the thermal buoyancy and minimizes the thickness of the thermal boundary layer. Thus, $\theta$ increases as $R$ grows. From Figure \ref{Prtheta}, we find that the magnitude of $\theta$ decreases with increasing Prandtl number $Pr$, which implies that an increase in $Pr$ causes a reduction in heat transfer. The effect of the heat absorption/generation parameter $H$ on the temperature $\theta$ is presented in Figure \ref{Htheta}. $H>0$ indicates heat generation (heat source), whereas $H<0$ denotes heat absorption (heat sink). Physically, a heat source means that heat is produced, which will increase the temperature of the fluid. Therefore, the temperature rises sharply with any increase in the heat source parameter. The influence of the heat source parameter $H> 0$ on the temperature profile is closely related to the heat sink parameter $H < 0$. These results are physically reasonable, because heat is generated at the surface of the region and the Hall effect of the porous medium also increases, which enhances the temperature of the fluid. The profiles of $\theta$ with respect to $t$ are plotted in Figure \ref{ttheta}, showing that the $\theta$ increases over time $t$.

\begin{figure*}[htbp]
\centering
\begin{minipage}[t]{0.42\textwidth}
\centering
\includegraphics[width=6cm]{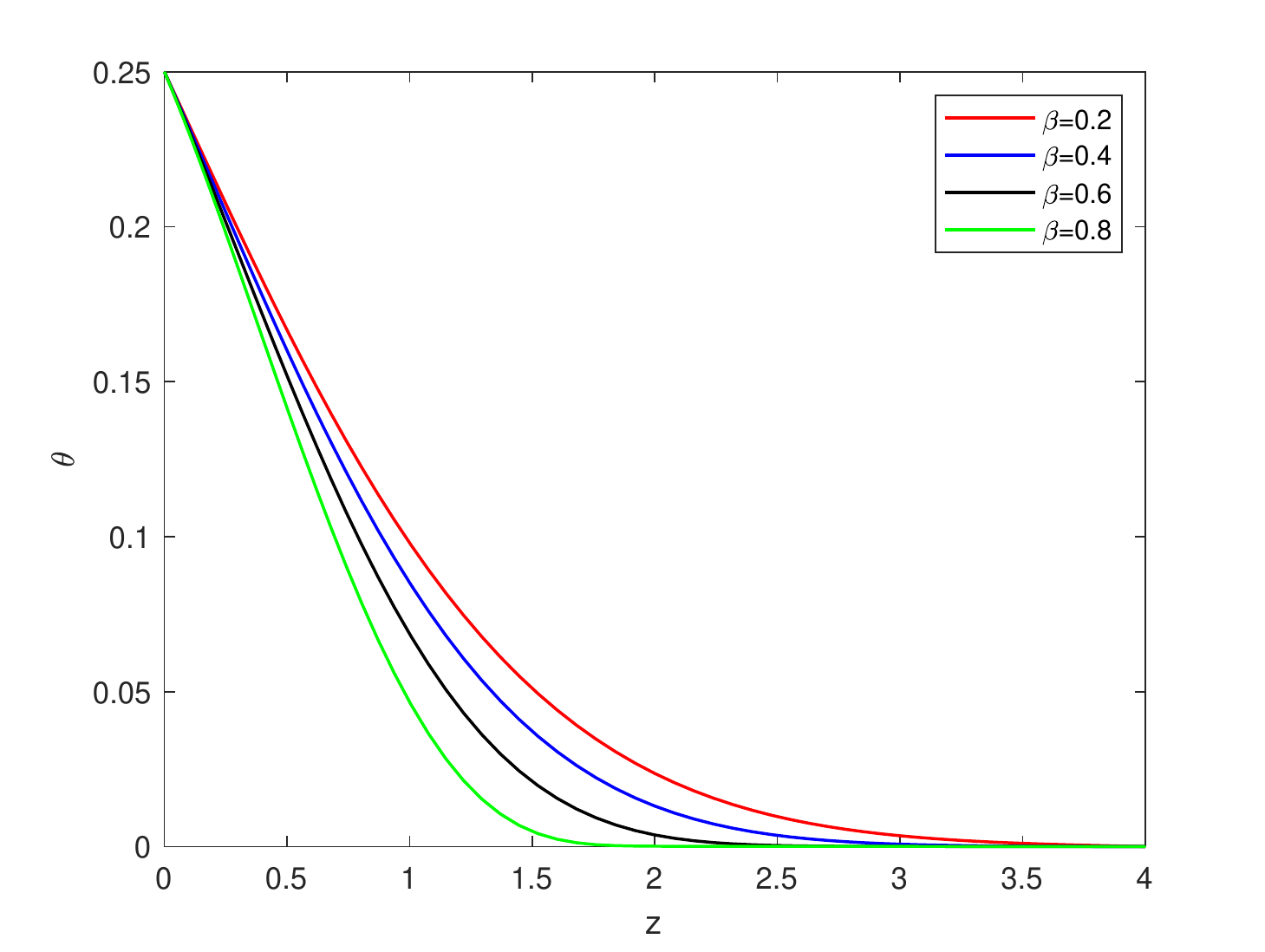}
\caption{ Temperature $\theta$ for different $\beta$.}
\label{betatheta}
\end{minipage}
\begin{minipage}[t]{0.42\textwidth}
\centering
\includegraphics[width=6cm]{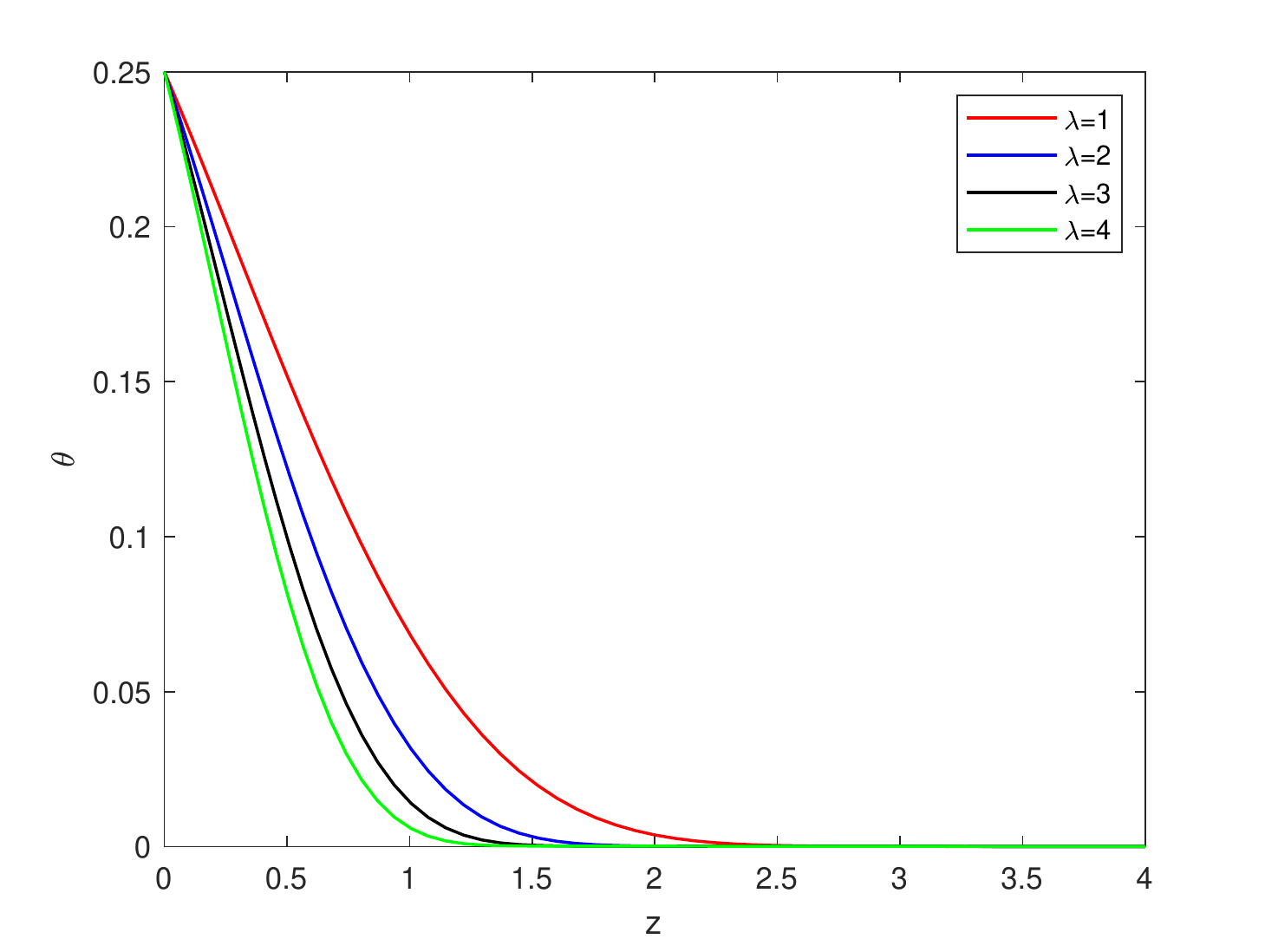}
\caption{ Temperature $\theta$ for different $\lambda$.}
\label{lambdatheta}
\end{minipage}

\centering
\begin{minipage}[t]{0.42\textwidth}
\centering
\includegraphics[width=6cm]{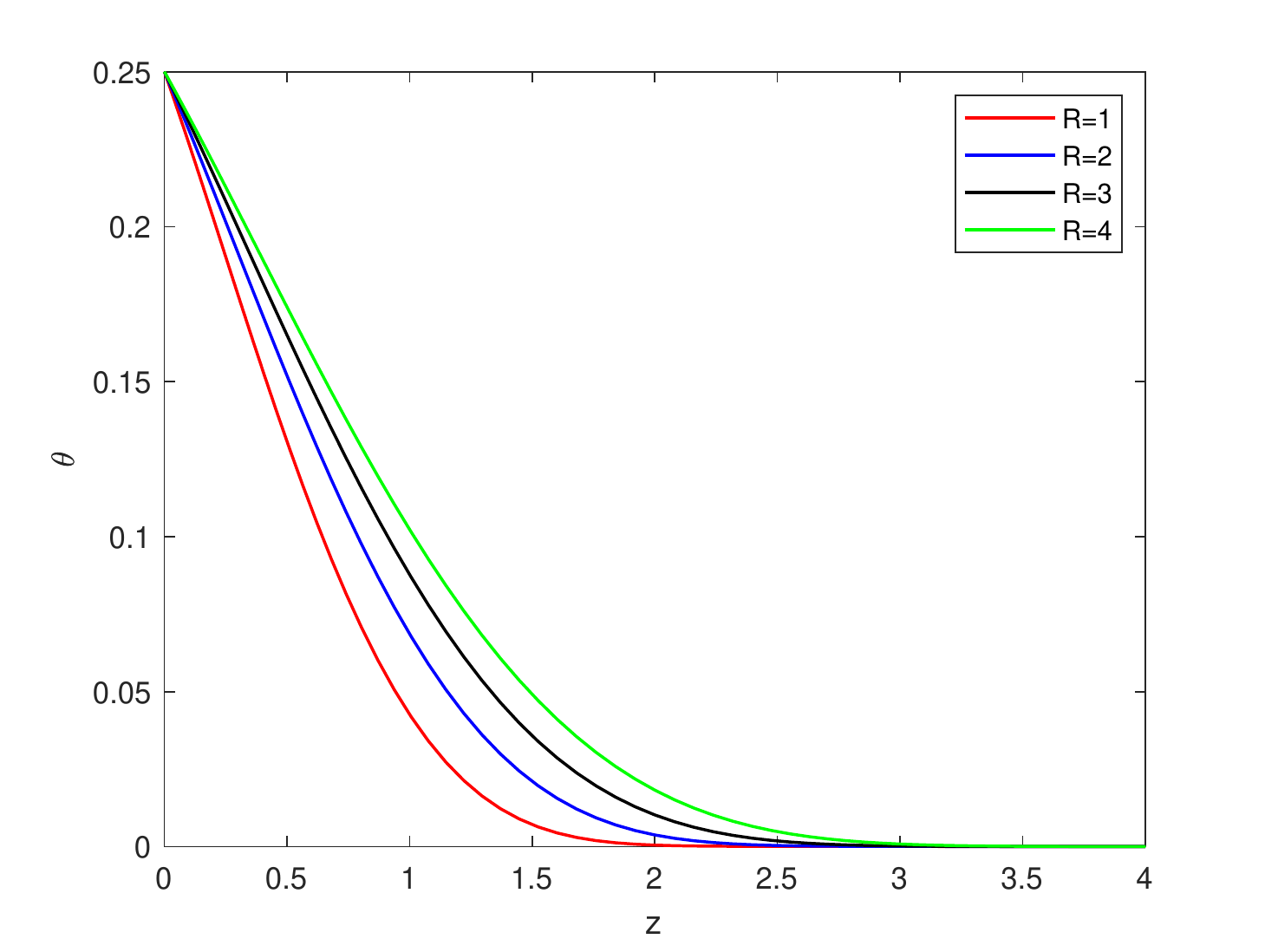}
\caption{ Temperature $\theta$ for different $R$.}
\label{Rtheta}
\end{minipage}
\centering
\begin{minipage}[t]{0.42\textwidth}
\centering
\includegraphics[width=6cm]{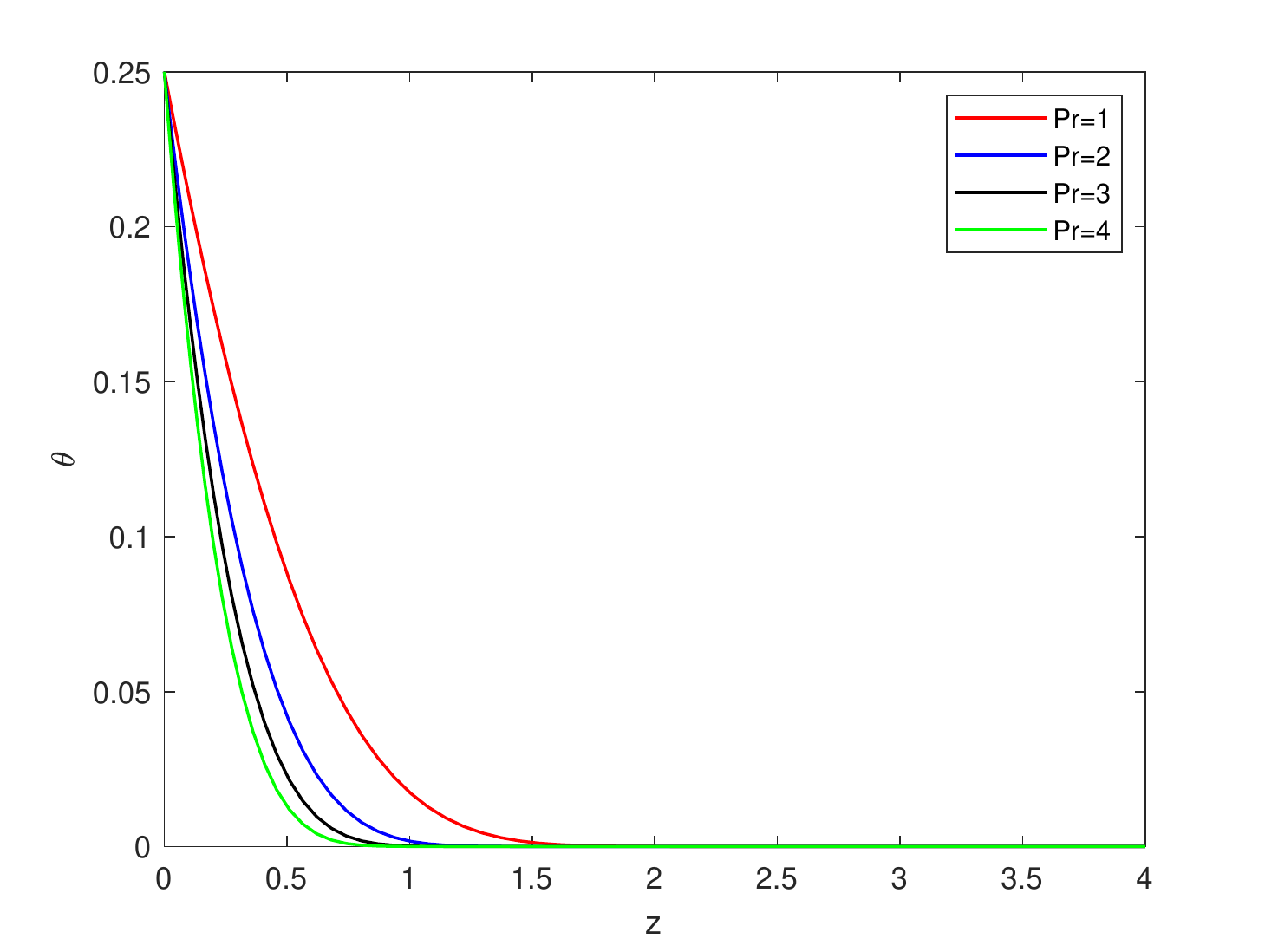}
\caption{ Temperature $\theta$ for different $Pr$.}
\label{Prtheta}
\end{minipage}

\centering
\begin{minipage}[t]{0.42\textwidth}
\centering
\includegraphics[width=6cm]{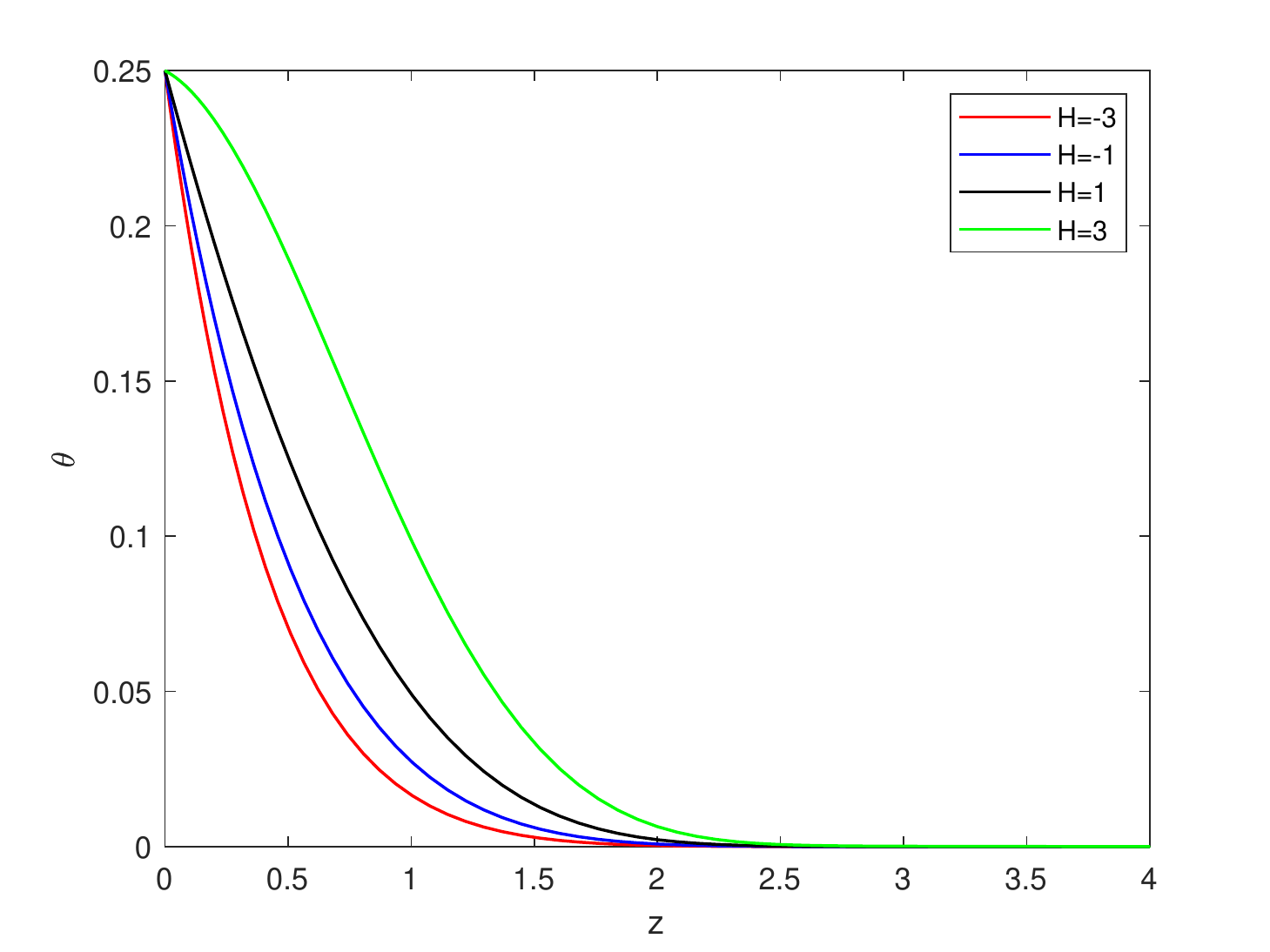}
\caption{ Temperature $\theta$ for different $H$.}
\label{Htheta}
\end{minipage}
\centering
\begin{minipage}[t]{0.42\textwidth}
\centering
\includegraphics[width=6cm]{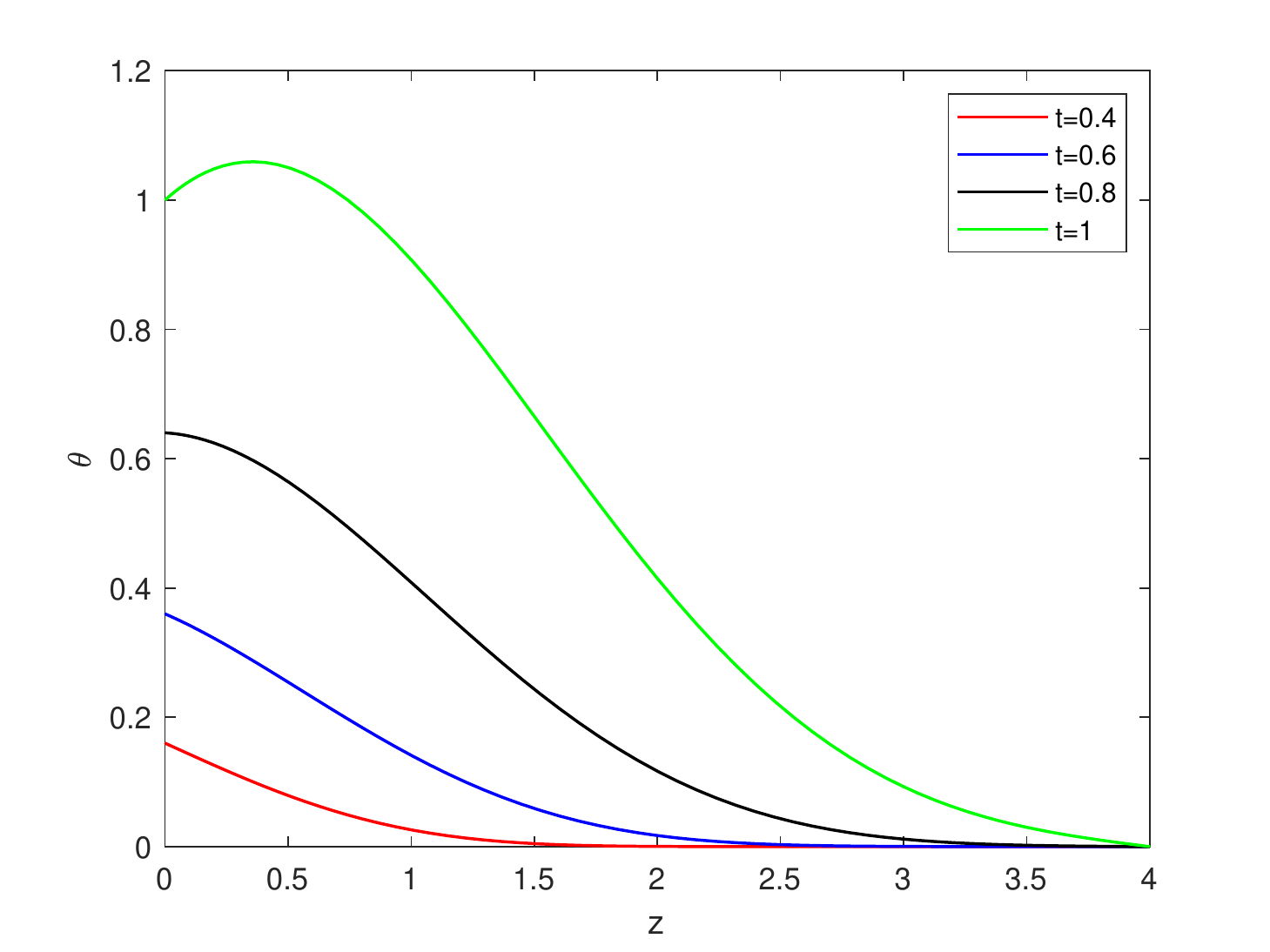}
\caption{ Temperature $\theta$ for different $t$.}
\label{ttheta}
\end{minipage}
\end{figure*}

\section{Conclusions}
\label{sec:7}
We have established a new fractional MHD coupled flow and heat transfer model for a generalized second-grade fluid including the effects of a magnetic field, radiation, and heat source. The coupled model consists of a fractional momentum equation based on the modified constitutive relationship and a heat conduction equation combined with a generalized form of Fourier law. We also presented a numerical scheme in which the second-order FBDF method is applied in the temporal direction and the Legendre spectral method is applied in the spatial direction. The fully discrete scheme was proved to be stable and convergent with an accuracy of $O(\tau^2+N^{-r})$. To reduce the memory requirement and computation time, a fast method was developed and the strict convergence of the numerical scheme with this fast method was proved. Some numerical results were presented to support the theoretical analysis. Finally, we simulate the unsteady fractional MHD flow and heat transfer of the generalized second-grade fluid through a porous medium, and analyzed the effects of the related parameters on the velocity and temperature profiles.
{
\begin{remark}
Although our  numerical scheme and theoretical analysis are for the one-dimensional fractional coupled model which is established according to  the magnetic fluid flow and heat transfer problem, the scheme and analysis in our paper can be fully generalized to the two-dimensional models and even higher dimensional models. For the high-dimensional models, we can  provide the second-order fractional backward difference formula for the temporal discretization and the Legendre spectral method for the spatial discretization, then the stability and convergence analysis are similar to the one-dimensional case. Furthermore, the fast method in our paper is proposed for time discretization and is independent of spatial dimension, so it can be applied to solve the  high-dimensional models.
%Although our  numerical method and theoretical analysis are for one-dimensional fractional coupled model which is established according to  the actual magnetic fluid flow and heat transfer problems (see Figure 1),  the method analysis can  be fully applied to the two-dimensional model. We can similarly get Theorem 1, Theorem 2 and Theorem 3 for the two-dimensional case. The whole process is similar with the one-dimensional case, so we don't go into detail in this paper.
\end{remark}}
\begin{acknowledgements}
This work has been supported by the Project of the National Key R\&D Program (Grants No. 2021YFA1000202), the National Natural Science Foundation of China (Grants Nos. 12120101001, 12001326, 12171283),  Natural Science Foundation of Shandong Province (Grants Nos. ZR2021ZD03, ZR2020QA032, ZR2019ZD42), China Postdoctoral Science Foundation (Grants Nos. BX20190191, 2020M672038), the startup fund from Shandong University (Grant No. 11140082063130).

\end{acknowledgements}
% Authors must disclose all relationships or interests that
% could have direct or potential influence or impart bias on
% the work:
%
\section*{Conflict of interest}
 The authors declare that they have no conflict of interests regarding the publication of this paper.
% The authors declare that they have no conflict of interest.
% BibTeX users please use one of
%\bibliographystyle{spbasic}      % basic style, author-year citations
%\bibliographystyle{spmpsci}      % mathematics and physical sciences
%\bibliographystyle{spphys}       % APS-like style for physics
%\bibliography{}   % name your BibTeX data base
% Non-BibTeX users please use

\end{document}